\documentclass[a4paper,12pt]{article}
\usepackage[margin=1.1in]{geometry}  % set the margins to 1in on all sides

\usepackage{graphicx}              % to include figures
\usepackage{amsmath}
\usepackage{amscd}               % great math stuff
\usepackage{amsfonts} 
\usepackage{amssymb}             % for blackboard bold, etc
\usepackage{amsthm}                % better theorem environment
\usepackage{mathrsfs}
\usepackage{amsbsy}
\usepackage{tikz}
\usepackage{booktabs}
\usetikzlibrary{decorations.pathmorphing}
\usepackage{hyperref}

\usepackage{biblatex}
\addbibresource{mybibliography.bib}

\hypersetup{pdfstartview=}

\numberwithin{equation}{section}

% various theorems, numbered by section
\newtheorem{thm}{Theorem}[section]
\newtheorem{defn}[thm]{Definition}
\newtheorem{lem}[thm]{Lemma}
\newtheorem{prop}[thm]{Proposition}
\newtheorem{cor}[thm]{Corollary}

\newtheorem{assumption}[thm]{Assumption}
\DeclareMathOperator{\id}{id}

\def\XXint#1#2#3{{\setbox0=\hbox{$#1{#2#3}{\int}$ }
\vcenter{\hbox{$#2#3$ }}\kern-.6\wd0}}

\makeatletter
\def\th@newremark{\th@remark\thm@headfont{\bfseries}}
\makeatletter

\theoremstyle{newremark}
\newtheorem{rmk}[thm]{Remark}

\newcommand{\m}{\mathbf{m}}

     %mathematical expectation

     %probability

      % for Real numbers

      % for integer

\newcommand{\aA}{\mathcal{A}}

\newcommand{\cC}{\mathcal{C}}
\newcommand{\dD}{\mathcal{D}}

\newcommand{\fF}{\mathcal{F}}

\newcommand{\iI}{\mathcal{I}}
\newcommand{\jJ}{\mathcal{J}}
\newcommand{\kK}{\mathcal{K}}
\newcommand{\lL}{\mathcal{L}}
\newcommand{\mM}{\mathcal{M}}

\newcommand{\rR}{\mathcal{R}}
\newcommand{\sS}{\mathcal{S}}
\newcommand{\tT}{\mathcal{T}}

\newcommand{\vV}{\mathcal{V}}
\newcommand{\wW}{\mathcal{W}}
\newcommand{\xX}{\mathcal{X}}
\newcommand{\yY}{\mathcal{Y}}
\newcommand{\zZ}{\mathcal{Z}}

\newcommand{\E}{\mathbf{E}}
\newcommand{\N}{\mathbf{N}}
\renewcommand{\P}{\mathbf{P}}
\newcommand{\R}{\mathbf{R}}
\newcommand{\T}{\mathbf{T}}
\newcommand{\Z}{\mathbf{Z}}

\renewcommand{\id}{\mathrm{id}}
\newcommand{\eps}{\varepsilon}
\renewcommand{\d}{\partial}
\renewcommand{\div}{\mathrm{div }}
\newcommand{\supp}{\mathrm{supp }}

%\newcommand{\RHS}{\mathbb{RHS}}   % Right Hand Side
%\newcommand{\LHS}{\mathbb{LHS}}   % Left Hand Side These are already defined

     %trace
  % for coordinate vector fields
  % for partial derivatives

 % Lexicographical product representation
 % Lexico-cycle product
 % The set of derangements

  % Left floor
 % Right floor

\newcommand{\fs}{\mathfrak{s}}

\newcommand{\sT}{\mathsf{T}}

\definecolor{darkgreen}{rgb}{0.1,0.7,0.1}
\definecolor{darkred}{rgb}{0.7,0.1,0.1}
\definecolor{darkblue}{rgb}{0,0,0.7}
\addtolength{\marginparwidth}{2.3em}

\def\bigscal#1{\big\langle#1\big\rangle}
\def\Bigscal#1{\Big\langle#1\Big\rangle}

\colorlet{symbols}{blue!90!black}
\colorlet{testcolor}{green!60!black}

%\def\symbol#1{\textcolor{symbols}{#1}}
%\def\1{\mathbf{\symbol{1}}}
%\def\X{\symbol{X}}

%\usetikzlibrary{shapes.misc}
%\usetikzlibrary{shapes.symbols}
%\usetikzlibrary{snakes}
%\usetikzlibrary{decorations}
%\usetikzlibrary{decorations.markings}

\def\drawx{\draw[-,solid] (-3pt,-3pt) -- (3pt,3pt);\draw[-,solid] (-3pt,3pt) -- (3pt,-3pt);}
\tikzset{
	root/.style={circle,fill=testcolor,inner sep=0pt, minimum size=2mm},
	dot/.style={circle,fill=black,inner sep=0pt, minimum size=1mm},
	var/.style={circle,fill=black!10,draw=black,inner sep=0pt, minimum size=
	2mm},
	dotred/.style={circle,fill=black!50,inner sep=0pt, minimum size=2mm},
	generic/.style={semithick,shorten >=1pt,shorten <=1pt},
	dist/.style={ultra thick,draw=testcolor,shorten >=1pt,shorten <=1pt},
	testfcn/.style={ultra thick,testcolor,shorten >=1pt,shorten <=1pt,<-},
	testfcnx/.style={ultra thick,testcolor,shorten >=1pt,shorten <=1pt,<-,
		postaction={decorate,decoration={markings,mark=at position 0.6 with {\drawx}}}},
	kepsilon/.style={semithick,shorten >=1pt,shorten <=1pt,densely dashed,->},
	kprimex/.style={semithick,shorten >=1pt,shorten <=1pt,densely dashed,->,
		postaction={decorate,decoration={markings,mark=at position 0.4 with {\drawx}}}},
	kernel/.style={semithick,shorten >=1pt,shorten <=1pt,->},
	multx/.style={shorten >=1pt,shorten <=1pt,
		postaction={decorate,decoration={markings,mark=at position 0.5 with {\drawx}}}},
	kernelx/.style={semithick,shorten >=1pt,shorten <=1pt,->,
		postaction={decorate,decoration={markings,mark=at position 0.4 with {\drawx}}}},
	kernel1/.style={->,semithick,shorten >=1pt,shorten <=1pt,postaction={decorate,decoration={markings,mark=at position 0.45 with {\draw[-] (0,-0.1) -- (0,0.1);}}}},
	kernel2/.style={->,semithick,shorten >=1pt,shorten <=1pt,postaction={decorate,decoration={markings,mark=at position 0.45 with {\draw[-] (0.05,-0.1) -- (0.05,0.1);\draw[-] (-0.05,-0.1) -- (-0.05,0.1);}}}},
	kernelBig/.style={semithick,shorten >=1pt,shorten <=1pt,decorate, decoration={zigzag,amplitude=1.5pt,segment length = 3pt,pre length=2pt,post length=2pt}},
	gepsilon/.style={dotted,semithick,shorten >=1pt,shorten <=1pt},
	renorm/.style={shape=circle,fill=white,inner sep=1pt},
	labl/.style={shape=rectangle,fill=white,inner sep=1pt},
	xi/.style={circle,fill=symbols!10,draw=symbols,inner sep=0pt,minimum size=1.2mm},
	xix/.style={crosscircle,fill=symbols!10,draw=symbols,inner sep=0pt,minimum size=1.2mm},
	xib/.style={circle,fill=symbols!10,draw=symbols,inner sep=0pt,minimum size=1.6mm},
	xibx/.style={crosscircle,fill=symbols!10,draw=symbols,inner sep=0pt,minimum size=1.6mm},
	not/.style={circle,fill=symbols,draw=symbols,inner sep=0pt,minimum size=0.5mm},
	>=stealth,
	}

\makeatletter
\def\DeclareSymbol#1#2#3{\expandafter\gdef\csname MH@symb@#1\endcsname{\tikz[baseline=#2,scale=0.15,draw=symbols]{#3}}\expandafter\gdef\csname MH@symb@#1s\endcsname{\scalebox{0.7}{\tikz[baseline=#2,scale=0.15,draw=symbols]{#3}}}}
\def\<#1>{\csname MH@symb@#1\endcsname}
\makeatother

\DeclareSymbol{Xi22}{0.5}{\draw (0,0) node[xi] {} -- (-1,1) node[not] {} -- (0,2) node[xi] {}; }

\begin{document}

\title{Periodic homogenisation for two dimensional generalised parabolic Anderson model}

\author{
Yilin Chen\footnote{Email: yilin\_chen@pku.edu.cn}\\
\textit{Peking University}\\
\and
Benjamin Fehrman\footnote{Email: fehrman@lsu.edu}\\
\textit{Louisiana State University}\\
\and
Weijun Xu\footnote{Corresponding author. Email: xuweijun@westlake.edu.cn}\\
\textit{Westlake University}
}

\maketitle

\abstract{We consider the periodic homogenisation problem for the generalised parabolic Anderson model on the two dimensional torus. We show that, for the renormalisation that respects Wick ordering, the homogenisation and renormalisation procedures commute. The main novelty is to identify a suitable solution ansatz beyond the usual para-controlled ansatz to set up a fixed point problem uniform in the homogenisation parameter. After that, one further utilises cancellations and resonances from the homogenisation oscillations to show convergences of both the solution and flux to the right limits. 

At a technical level, we frequently use integration by parts as well as ``completing the products" to circumvent the incompatibility between para-products and variable coefficients. As a byproduct, we also show that the standard two dimensional generalised parabolic Anderson model can be constructed with para-controlled calculus without using commutator estimates.}

\tableofcontents

\section{Introduction}

The aim of this article is to study the $\eps \rightarrow 0$ limit (along $\eps = \eps_N = \frac{1}{N}$) of the periodic homogenisation problem for the generalised parabolic Anderson model (gPAM) on the two dimensional torus $\T^2 = (\R / \Z)^2$, formally written as
\begin{equation} \label{e:gPAM_formal}
    \d_t u_\eps = \lL_\eps u_\eps  + g(u_\eps) \big( \xi - \infty_\eps \, g'(u_\eps) \big)\;, \quad (t,x) \in \R^{+} \times \T^2.
\end{equation}
Here, $\lL_\eps = \div \big( a( \cdot /\eps) \nabla \big)$ with a $1$-periodic, elliptic and three times differentiable $2 \times 2$ matrix $a$, and $\eps = \eps_N = \frac{1}{N}$ is the oscillation parameter along the inverse of the integers. The term $\xi$ on the right hand side is a spatial white noise on $\T^2$, a Gaussian generalised function with formal correlation $\E \xi(x) \xi(y) = \delta (x-y)$. The term ``$\infty_\eps$'' is formally the $\eps$-dependent quantity
\begin{equation} \label{e:formal_infinity}
    \infty_\eps ``=" \, \E \, \big( \nabla X_\eps \cdot a_\eps \nabla X_\eps \big)\;,
\end{equation}
where $X_\eps$ is the time independent solution to the linear equation
\begin{equation} \label{e:linear_solution}
    - \lL_\eps X_\eps = \Pi_0^\perp \xi := \xi - \int_{\T^2} \xi(y) \, {\rm d}y\;.
\end{equation}
Here, the average of $\xi$ is subtracted to ensure \eqref{e:linear_solution} is well-posed. Before we make the above formal expressions and equation precise, we first give our assumptions on $a$ and $g$. 

\begin{assumption} \label{a: assumption for a and g}
    The coefficient $a: \T^2 \rightarrow \R^{2 \times 2}$ is $1$-periodic, uniformly elliptic, and has three continuous derivatives. The function $g: \R \rightarrow \R$ has three bounded derivatives. 
\end{assumption}

We can now be more precise about the meaning of \eqref{e:gPAM_formal} and \eqref{e:formal_infinity}. Let $\xi^{(\delta)}$ be a sequence of smooth approximations to $\xi$, and $u_\eps^{(\delta)}$ be the solution to
\begin{equation} \label{e:gPAM_regularised}
    \d_t u_\eps^{(\delta)} = \lL_\eps u_\eps^{(\delta)} + g(u_\eps^{(\delta)}) \big( \xi^{(\delta)} - C_{\eps}^{(\delta)} g'(u_\eps^{(\delta)}) \big)\;.
\end{equation}
Then, it is well known (\cite{para_control_SPDE, rs_theory, Hairer-Labbe, SPDE_variable_nontrans}) that for every $\eps \in \N^{-1}$ and for the choice
\begin{equation} \label{e:renormalisation}
    C_\eps^{(\delta)}= \E \big( \nabla X_\eps^{(\delta)} \cdot a_\eps \nabla X_\eps^{(\delta)} \big)\;,
\end{equation}
where $X_\eps^{(\delta)}$ is the solution to the linear equation \eqref{e:linear_solution} with $\xi$ replaced by $\xi^{(\delta)}$, the solution $u_\eps^{(\delta)}$ to \eqref{e:gPAM_regularised} converges at fixed times in $\cC^{\alpha}$ for every $\alpha < 1$ to a unique (random) limit $u_\eps$ as $\delta \rightarrow 0$. Furthermore, the limit $u_\eps$ is independent of the choice of approximation $\{\xi^{(\delta)}\}$\footnote{That the limit with the particular choice of renormalisation \eqref{e:renormalisation} does not depend on the choice of approximation may not seem obvious at this stage. It will be a consequence of the proof later that all objects in the limiting system \eqref{e:fixed_pt_system_limit} are universal, and do not depend on the choice of regularisation.}. We call this limit $u_\eps$ the solution to the generalised parabolic Anderson model \eqref{e:gPAM_formal}. For fixed $\eps>0$, the renormalisation function $C_\eps^{(\delta)}$ explodes logarithmically in $\delta$, hence suggesting the formal notation \eqref{e:formal_infinity}. Note that $C_\eps^{(\delta)}$ is independent of time, but depends on the spatial variable $x$ since $a$ is not homogeneous. When $a$ is a constant matrix, it also becomes a constant, and agrees with the choice in \cite{Hairer-Labbe}. 

On the other hand, starting from \eqref{e:gPAM_regularised}, one can also fix $\delta > 0$ and take the $\eps \rightarrow 0$ limit, which corresponds to periodic homogenisation with a smooth right hand side. It is standard that the operator $\lL_\eps$ homogenises to
\begin{equation*}
    \lL_0 := \div \big(\bar{a} \nabla \big)\;
\end{equation*}
in the sense of convergence of the associated Green's functions, where $\bar{a}$ is the homogenised matrix of $a$, a constant $2 \times 2$ elliptic matrix which is in general \textit{different} from the average of $a$ on the torus. A precise expression of $\bar{a}$ is given in \eqref{e:homogenised_matrix}. 

Back to \eqref{e:gPAM_regularised}, the above classical homogenisation results imply that for every fixed $\delta>0$, along $\eps = \eps_N = \frac{1}{N} \rightarrow 0$, $u_\eps^{(\delta)}$ converges to a unique limit $u_0^{(\delta)}$, which solves the constant coefficient parabolic random PDE
\begin{equation} \label{e:gPAM_constant}
    \d_t u_0^{(\delta)} = \lL_0 u_0^{(\delta)} + g \big( u_0^{(\delta)} \big) \big( \xi^{(\delta)} - C^{(\delta)} g'(u_0^{(\delta)}) \big)\;,
\end{equation}
where
\begin{equation} \label{e:renormalisation_constant}
    C^{(\delta)} = \E \big( \nabla X_0^{(\delta)} \cdot\bar{a} \nabla X_0^{(\delta)} \big)\;,
\end{equation}
is the $\eps \rightarrow 0$ (weak) limit of $C_\eps^{(\delta)}$. Here, $X_0^{(\delta)}$ is the solution to \eqref{e:linear_solution} with $\eps = 0$ (that is, defined by $\mathcal{L}_0$). Now, \eqref{e:gPAM_constant} is the standard constant coefficient $2$D gPAM, and hence we know $u_0^{(\delta)}$ converges in $\cC^{\alpha}$ for $\alpha<1$ (at fixed times) to a unique limit $u_0$. 

The main result of this article is that the solution $u_\eps$ to \eqref{e:gPAM_formal}, defined as the $\delta \rightarrow 0$ limit of $u_\eps^{(\delta)}$ above, converges to the same $u_0$ as $\eps \rightarrow 0$ up to the time where $u_0$ is defined. In other words, the procedures of renormalisation and periodic homogenisation commute in this case. 

In the two recent advances \cite{2d_gpam_global_cfw, 2d_gpam_global_szz}, the authors showed that the solution $u_0$ as the $\delta \rightarrow 0$ limit from \eqref{e:gPAM_constant} exists globally in time. When combined with our main convergence result, this implies that the convergence of $u_\eps$ to $u_0$ holds up to arbitrary time. The main theorem is stated as follows.

\begin{thm} \label{thm:main}
    Let $\alpha \in (0,1)$. For every $\eps = \eps_N = \frac{1}{N}$, let $u_\eps$ be the solution to \eqref{e:gPAM_formal} as specified above with initial data $u_\eps (0) \in \cC^{\alpha}$. Suppose there exists $\alpha'>\alpha$ and $u_0(0) \in \cC^{\alpha'}$ such that $u_\eps (0) \rightarrow u_0(0)$ in $\cC^{\alpha}$. Let $u_0$ be the solution to \eqref{e:gPAM_formal} with initial data $u_0(0)$ and $\lL_\eps$ replaced by $\lL_0$ described above. 
    
    Then for every $T>0$ and almost every realisation of $\xi$, $\{u_\eps\}$ is uniformly bounded in $\cC_\fs^\alpha ([0,T] \times \T^2)$ along the sequence $\eps_N = \frac{1}{N}$, and $u_\eps \rightarrow u_0$ in $\cC_\fs^{\alpha}([0,T] \times \T^2)$ in probability. The space $\cC_\fs^{\alpha}$ is defined in \eqref{e:spacetime}. 
    
    Furthermore, the flux $a_\eps \nabla u_\eps$ also converges to $\bar{a} \nabla u_0$ in a proper space as specified in Theorem~\ref{thm:flux_convergence} below. 
\end{thm}
\begin{proof}
    We prove it by assuming Theorems~\ref{thm:fixed_pt_thm}, ~\ref{thm:flux_convergence} and~\ref{thm:stochastic}. The verifications of these assumed statements are provided in corresponding sections below. 
    
    We first note that the stochastic objects in \eqref{e:noises_limit} are precisely the same as the ones from the two dimensional generalised parabolic Anderson model. Hence, the solution $u_0$ constructed from the fixed point problem in Theorem~\ref{thm:fixed_pt_thm} coincides with the solution to the standard $2$D gPAM (as the limit of \eqref{e:gPAM_constant}). Hence, by \cite{2d_gpam_global_cfw, 2d_gpam_global_szz}, for every $T>0$ and every realisation of $\xi$, we have the bound
    \begin{equation*}
        \|u_0\|_{\cC_\fs^\alpha([0,T] \times \T^2)} + \|u_0\|_{L_{T}^\infty \cC_x^{\alpha'}} \lesssim 1 + \|u_0\|_{\cC^{\alpha+\delta}}\;.
    \end{equation*}
    The convergence of $u_\eps$ to $u_0$ on $\cC_\fs^\alpha ([0,T] \times \T^2)$ then follows from Theorem~\ref{thm:fixed_pt_thm} and the convergence of the stochastic objects in Theorem~\ref{thm:stochastic}. The convergence of the flux $a_\eps \nabla u_\eps$ to $\bar{a} \nabla u_0$ is the content of Theorem~\ref{thm:flux_convergence} together with Theorem~\ref{thm:stochastic}. 
\end{proof}

In order to emphasise the main new inputs and keep the overall presentation concise, we have made a number of restrictions and assumptions, which we briefly describe below. We also indicate how one could remove or improve them. 
\begin{enumerate}
    \item We take limits along $\eps = \eps_N = \frac{1}{N}$ to keep the problem on the torus. One could study the problem \eqref{e:gPAM_formal} on a bounded domain in $\R^2$ with a nice boundary. For fixed $\eps$, the solution theory for \eqref{e:gPAM_formal} on bounded domains can be obtained by combining the results in \cite{SPDE_boundary} and \cite{SPDE_variable_nontrans}. To obtain uniform boundedness and convergence as $\eps \rightarrow 0$, one needs to use the Dirichlet correctors (\cite{Geng2023GaussianBA}) instead of the period correctors $\chi$ as in the current article. We expect it should in principle follow from the methods in this article, but will be much more technically involved. 

    \item We assume $a$ being time-independent for technical simplicity. One can work with space-time periodic coefficients $a$ (as in \cite{st_homo_phi42}). We assume $g$ has three bounded derivatives, which could be relaxed to having derivatives with polynomial growth. We also assume that for each fixed $\eps$, the initial data $u_\eps(0)$ has the same regularity as the solution, and that $u_0(0)$ has slightly higher regularity. This is mainly for conciseness of the presentation --- one does not need to add weight to the solution space for $u_\eps$ at $t = 0$, and that the convergence $u_\eps \rightarrow u_0$ can be in the same space where $u_\eps$ lives. 
\end{enumerate}

Periodic homogenisation has a long history. An overview of the theory of periodic homogenisation for divergence form elliptic equations can be found in \cite{BenLioPap2011, JikKozOle1994}. The book \cite{Shen2018} is also comprehensive and contains more recent developments. Homogenisation problems with singular stochastic forcing have also been studied recently. We mention two interesting works \cite{SPDE_multiscale} and \cite{add_functional_rp}, where the authors discovered that homogenisation has a nontrivial effect on other parts of the limiting equations/objects (beyond that the homogenised coefficient being different from the average). 

Periodic homogenisation for nonlinear singular stochastic PDEs that require renormalisations are also natural questions to investigate, as they involve understanding the interactions between two singular limiting procedures -- homogenisation and renormalisation. The case of dynamical $\phi^4_2$ has been treated in \cite{homo_pphi2, st_homo_phi42}. In that model, the ansatz is relatively simple via standard Da Prato-Debussche method. This is not the case for the generalised parabolic Anderson model in 2D, where even for \eqref{e:gPAM_constant} with a constant coefficient Laplacian, one needs a nontrivial ansatz provided by regularity structures \cite{rs_theory} or para-controlled distributions \cite{para_control_SPDE}. The regularity structures framework also extends to situations of variable coefficients with sufficient regularity (see \cite{rs_manifolds, SPDE_variable_nontrans}), in particular covering the solution theory for \eqref{e:gPAM_formal} for every fixed $\eps$. 

However, the solution theories so far do not give uniform-in-$\eps$ control of the solutions, mainly because there is no uniform-in-$\eps$ Schauder estimate for the Green's function of $\d_t - \lL_\eps$, and because the ansatzes from homogenisation (two-scale expansion) and singular stochastic PDEs (Taylor-like expansions from regularity structures or para-controlled distributions) are not easily compatible with each other. The key to our proof of Theorem~\ref{thm:main} is to identify a suitable ansatz to complete a fixed point problem for \eqref{e:gPAM_formal} uniformly in the homogenisation parameter $\eps$. This ansatz could be viewed as a higher order refinement of the standard para-controlled ansatz in \cite{para_control_SPDE} adapted to the operator $\d_t - \lL_\eps$. We expect similar techniques can be applied to other models such as dynamical $\phi^4_3$, but technically much more involved, since Fourier methods based on trigonometric functions are not conveniently compatible with variable coefficients. 

The two very recent works \cite{para_operator_homo, homo_spde_general} aim at developing more unified and systematic procedures for such questions. In \cite{para_operator_homo}, we built a variant of para-controlled calculus from eigen-functions of the oscillatory operator $\lL_\eps$ (instead of those of the Laplacian), which enables us to treat periodic homogenisation for singular SPDEs in a relatively unified way. On the other hand, our framework is currently restricted to time-independent and symmetric coefficient matrices $a$, since spectral theory for self-adjoint operators is heavily used. In \cite{homo_spde_general}, based on the previous work \cite{st_homo_phi42}, Hairer and Singh successfully implemented periodic homogenisation operators into regularity structures, thus giving a general and systematic framework for periodic homogenisation for a broad class of singular SPDEs. It also allows spacetime oscillatory coefficients (and without assumption on symmetry). We refer the readers to \cite{para_operator_homo, homo_spde_general} for more details of the developments and the precise examples covered there. 

Many of the techniques developed in this paper are based on fine studies of the asymptotic behaviour of the parabolic Green's function related to the periodic operator $\mathcal{L}_\varepsilon$, which are obtained in \cite{GengShen2020,GengShen2015,Geng2020}. These in turn are based on techniques in early works \cite{FLinCompactness,FLinCompactness1989}, which introduced compactness methods to establish uniform regularity and Green's function estimates for elliptic systems. In the context of stochastic homogenisation, the asymptotic behaviour of Green's functions is very different and in general much subtler. Hence, we expect problems combining singular stochastic PDEs and stochastic homogenisation will be a much greater challenge, and will be worth investigation in the long term. We refer to \cite{Kozlov1980,Papanicolaou1981,PapVar1982} for early works on qualitative study of stochastic homogenisation, \cite{Yurinskii1986} for a first quantitative result, and \cite{AKM_homo, Armstrong2016, GloNeuOtt2020, GloriaOtto2015, GloriaOtto2012, OttoMarahrens2015} for recent breakthroughs in quantitative stochastic homogenisation. We also refer to \cite{AK_homo_book, AKM_homo_book} for overviews of this subject.

\subsection*{Remarks on homogenisation and renormalisation}

The statement of Theorem~\ref{thm:main} provides a solution theory to \eqref{e:gPAM_formal} for every fixed $\eps$, and gives convergence $u_\eps \rightarrow u_0$. Combined with the discussions above the theorem, it also implies that with the choice of $C_\eps^{(\delta)}$ in \eqref{e:renormalisation}, the homogenisation and renormalisation procedures commute for the solution $u_\eps^{(\delta)}$ to \eqref{e:gPAM_regularised}. 

To see this, we first note that, for fixed $\delta>0$ the limiting procedure $u_\eps^{(\delta)} \rightarrow u_0^{(\delta)}$ in $L_T^\infty \cC_x^{\alpha}$ for $\alpha<1$ is standard periodic homogenisation. In particular, the homogenised limit $u_0^{(\delta)}$ solves \eqref{e:gPAM_constant} with renormalisation constant $C^{(\delta)}$ given in \eqref{e:renormalisation_constant}, which is also the weak limit of $C_\eps^{(\delta)}$ as $\eps \rightarrow 0$. The convergence of $u_0^{(\delta)} \rightarrow \bar{u}_0$ in $L_T^\infty \cC_x^\alpha$ is then standard. 

For the other direction, for fixed $\eps>0$, it follows from the continuity of the solution map $(\Upsilon_\eps, u_\eps(0)) \mapsto (u_\eps, v_\eps, w_\eps)$ (as part of Theorem~\ref{thm:main}) that $u_\eps^{(\delta)} \rightarrow u_\eps$ in $L_T^\infty \cC_x^{\alpha}$ as $\delta \rightarrow 0$, where $\Upsilon_\eps$ is the collection of the enhanced stochastic objects given in \eqref{e:noises_eps}. Then, the most important ingredient is Theorem~\ref{thm:main}, giving the convergence $u_\eps \rightarrow u_0$. Finally, note that the solution $\bar{u}_0$ to the standard constant coefficient gPAM can be described exactly by the fixed point system \eqref{e:fixed_pt_system_limit}. This shows that $\bar{u}_0 = u_0$, concluding that homogenisation and renormalisation procedures do commute in this situation. 

We now give some remarks on the choice of the renormalisation function $C_\eps^{(\delta)}$ in \eqref{e:renormalisation}. The choice $C_\eps^{(\delta)}$ is time independent but depends on the spatial variable $x \in \T^2$ if $a$ is non-constant, and this inhomogeneity in general gives an infinite degrees of freedom in choice of renormalisation. In \cite{SPDE_variable_nontrans}, Singh proposed a natural way to produce finite dimensional family of counter terms when the coefficient $a$ has sufficient regularity. But this is not the case for homogenisation problems, where uniformity in the oscillation parameter is required. 

In the specific situation \eqref{e:gPAM_regularised}, the choice $\widetilde{C}_\eps^{(\delta)}$ in \cite{SPDE_variable_nontrans}, as a function of $x \in \T^2$, is proportional to $\big( \det \big( a(x/\eps) \big) \big)^{-\frac{1}{2}}$. The main theorem in \cite{SPDE_variable_nontrans} implies that for each fixed $\eps$, the difference $C_\eps^{(\delta)} - \widetilde{C}_\eps^{(\delta)}$ converges to a continuous function on $\T^2$ as $\delta \rightarrow 0$. But as discussed in \cite{homo_pphi2}, there exists $\lambda(\delta) \sim |\log \delta|$ such that
\begin{equation*}
    \int_{\T^2} C_\eps^{(\delta)}(x) {\rm d}x \rightarrow \frac{\lambda(\delta)}{\sqrt{\det \bar{a}}}\;, \qquad \int_{\T^2} \widetilde{C}_\eps^{(\delta)}(x) {\rm d}x \rightarrow \lambda(\delta) \int_{\T^2} \big( \det ( a(y) ) \big)^{-\frac{1}{2}} {\rm d}y
\end{equation*}
as $\eps \rightarrow 0$. Since the two coefficients multiplying $\lambda (\delta)$ are not equal if $a$ is not constant matrix, the two renormalisations diverge far apart as $\delta \rightarrow 0$ if one wants uniformity in $\eps$. Similar as in \cite{st_homo_phi42}, one can add further $\eps$-dependent counter terms to $\widetilde{C}_\eps^{(\delta)}$ in a systematic way to obtain convergence also in $\eps$. The choice of $C_\eps^{(\delta)}$ in \eqref{e:renormalisation_constant} is also natural in the sense that it respects the natural Wick ordering, and agrees with the one in \cite{Hairer-Labbe} when $a$ is a constant matrix. 

In \cite{homo_spde_general}, Hairer and Singh gave a systematic way to analyse renormalisation functions, allowing to understand general behaviour of the counter-terms in different regimes of homogenisation and regularisation parameters.

\subsection*{Notations and definitions}

\begin{flushleft}
    \textbf{Basic notations}
\end{flushleft}
Let $\T^2 = (\R / \Z)^2$ be the two dimensional torus of size $1$. For $f: \T^2 \rightarrow \R^m$, $\nabla f$ is the usual $2 \times m$ gradient vector. When $f$ is scalar valued, we write $\nabla^\sT f := (\nabla f)^{\sT}$ for transpose of the vector $\nabla f$. 

For functions/distributions $f$ on $\T^2$ (scalar, vector, or matrix valued), let $\Pi_0 f$ be the projection of $f$ onto its $0$-th Fourier mode, and $\Pi_0^\perp = \id - \Pi_0$. More precisely, we write
\begin{equation*}
    \Pi_0 f := \int_{\T^2} f(y) {\rm d}y\;, \qquad \Pi_0^\perp f = f - \Pi_0 f\;.
\end{equation*}
For $j \geq -1$, $\Delta_j$ denotes the $j$-th Littlewood Paley block (defined in Appendix~\ref{sec:para-products}). For $\m \geq 1$, define operators $\P_\m$ and $\P_\m^\perp$ by
\begin{equation} \label{e:Littlewood_Paley_cutoff}
    \P_\m f := \sum_{j \, \leq \, \log_2 \m} \Delta_j f\;, \qquad \P_\m^\perp f := f - \P_\m f\;.
\end{equation}
The definitions of the para-products $\prec, \succ$ and $\circ$ are given in Appendix~\ref{sec:para-products}. To simplify notations, we also write
\begin{equation*}
    f \preceq g = f \prec g + f \circ g\;, \qquad f \succeq g = f \succ g + f \circ g\;.
\end{equation*}
For a function space $\zZ$, $T>0$ and $p \in [1,+\infty]$, we write $L_T^p \zZ = L^p([0,T]; \zZ)$. For $\sigma>0$, we define the weighted norm $L_{T}^{\infty,\sigma} \zZ$ by
\begin{equation} \label{e:space_weight}
    \|f\|_{L_T^{\infty,\sigma} \zZ} := \sup_{t \in [0,T]} \Big( t^{\frac{\sigma}{2}} \|f(t)\|_{\zZ} \Big)\;.
\end{equation}
In practice, when a spacetime function is involved, we often write $\zZ_x$ instead of $\zZ$ to emphasise that the norm is taken with respect to the spatial variable (at fixed time); for example $\|f(t)\|_{\zZ_x}$ or $\|f\|_{L_T^p \zZ_x}$. 

For $\gamma \in (0,1]$, we use $\cC_\fs^{\gamma}$ to denote the norm
\begin{equation} \label{e:spacetime}
    \|f\|_{\cC_\fs^\gamma([0,T] \times \T^2)} := \|f\|_{L_{T,x}^\infty} + \sup_{\stackrel{0 < s < t < T}{x,y \in \T^2}} \bigg( \frac{|f(t,x) - f(s,y)|}{\big( \sqrt{|t-s|} + |x-y| \big)^\gamma} \bigg)\;.
\end{equation}
Note that $\cC_\fs^\gamma$ is equivalent to $L_T^\infty \cC_x^\gamma \cap \cC_T^{\gamma/2} L_x^\infty$. Finally, we use $\diamond$ to denote Wick product between two Gaussians.

\begin{flushleft}
    \textbf{Homogenisation quantities}
\end{flushleft}
Recall the coefficient matrix $a: \T^2 \rightarrow \R^{2 \times 2}$ in Assumption~\ref{a: assumption for a and g}. We write $a_\eps = a(\cdot/\eps)$. Let $\chi = (\chi_1, \chi_2)^{\sT}: \T^2 \rightarrow\R^2$ be the corrector for $a$, which is the unique solution to
\begin{equation}\label{eq: homo_corrector}
    \div \big( a (\id + \nabla \chi) \big) = 0
\end{equation}
subject to $\int_{\T^2} \chi = 0$. Let $\chi^*$ be the corrector for the transposed matrix $a^{\sT}$. Let
\begin{equation} \label{e:homogenised_matrix}
   \bar{a} := \int_{\T^2} a (\id + \nabla \chi) {\rm d}x
\end{equation}
be the homogenised matrix. Define $\Phi_\eps, \Phi_\eps^*: \R^2 \rightarrow \R^{2 \times 2}$ by
\begin{equation} \label{e:Phi}
    \Phi_\eps := \id + \nabla \chi(\cdot/\eps)\;, \qquad \Phi_\eps^* := \id + \nabla \chi^*(\cdot/\eps)\;.
\end{equation}
We also write $\Phi = \id + \nabla \chi$ and $\Phi^* = \id + \nabla \chi^*$. Finallly, we write $\lL_\eps := \div (a_\eps \nabla)$ and $\lL_0 := \div (\bar{a} \nabla)$. 

\begin{flushleft}
    \textbf{Green functions}
\end{flushleft}
Let $Q_\eps$ and $Q_0$ denote the parabolic Green's functions associated to $\d_t - \lL_\eps$ and $\d_t - \lL_0$ respectively, in the sense that
\begin{equation*}
    \big((\d_t - \lL_\eps)^{-1} f \big)(t,x) = \int_{0}^{t} \int_{\T^2} Q_\eps (t-r,x,y) f(r,y) {\rm d}y {\rm d}r\;.
\end{equation*}
We write $\iI_\eps = (\d_t - \lL_\eps)^{-1}$ and $\iI_0 = (\d_t - \lL_0)^{-1}$. Let $\nabla_x Q_\eps$ and $\nabla_y Q_\eps$ denote the gradient of $Q_\eps$ with respect to its first and second spatial variables respectively, and $\nabla \iI_\eps$ and $\nabla \iI_0$ denote convolutions with $\nabla_x Q_\eps$ and $\nabla_x Q_0$.  

Recall $\Phi_\eps$ and $\Phi_\eps^*$ from \eqref{e:Phi}. Define the vector valued kernels $R_\eps$ and $R_\eps^*$ by
\begin{equation} \label{e:difference_first_order}
    \begin{split}
    R_\eps (t,x,y) &:= (\nabla_x Q_\eps)(t,x,y) - \Phi_\eps (x) \cdot (\nabla_x Q_0)(t,x,y)\;,\\
    R_\eps^* (t,x,y) &:= (\nabla_y Q_\eps)(t,x,y) - \Phi_\eps^* (y) \cdot (\nabla_y Q_0)(t,x,y)\;.
    \end{split}
\end{equation}
Let $\rR_\eps$ be operation given by
\begin{equation*}
    (\rR_\eps f)(t,x) := \int_{0}^{t} \int_{\T^2} R_\eps (t-r,x,y) f(r,y) {\rm d}y {\rm d}r\;,
\end{equation*}
and $\rR_\eps^*$ be that given by the kernel $R_\eps^*$ in the same way. Let $\widetilde{R}_\eps$ be the $2 \times 2$ matrix valued kernel given by
\begin{equation} \label{e:difference_second_order}
    \widetilde{R}_\eps (t,x,y) := (\nabla_{x,y}^{2} Q_\eps)(t,x,y) - \Phi_\eps (x) \, (\nabla_{x,y}^2 Q_0)(t,x,y) \, \big( \Phi_\eps^* (y) \big)^{\sT}\;,
\end{equation}
and write
\begin{equation*}
    (\widetilde{\rR}_\eps f)(t,x) := \int_{0}^{t} \int_{\T^2} \widetilde{R}_\eps (t-r,x,y) f(r,y) \, {\rm d}y \, {\rm d}r
\end{equation*}
for the convolution of $f$ with $\widetilde{R}_\eps$. Here, $\nabla_{x,y}^{2} = \nabla_x \nabla_y^\sT$, where $\sT$ denotes the transpose. Note that $Q_0(t,x,y) = Q_0 (t,x-y)$, hence we have
\begin{equation*}
    \begin{split}
    &(\nabla_x Q_0)(t,x,y) = (\nabla Q_0)(t,x-y)\;, \qquad (\nabla_y Q_0)(t,x,y) = - (\nabla Q_0)(t,x-y)\;,\\
    &(\nabla_{x,y}^2 Q_0)(t,x,y) = - (\nabla^2 Q_0)(t,x-y)\;,
    \end{split}
\end{equation*}
where $\nabla$ and $\nabla^2$ denote the gradient and Hessian of $Q_0$, considered as a function of a single spatial variable. We still distinguish the roles of $x$ and $y$ in $Q_0$ as in \eqref{e:difference_first_order} and \eqref{e:difference_second_order} since this expression (up to a negative sign) is consistent with the definition of the kernels $R_\eps$, $R_\eps^*$ and $\widetilde{R}_\eps$.

\begin{flushleft}
    \textbf{Quantities from the equation}
\end{flushleft}
For $\eps \in \N^{-1}$, define $X_\eps := - \lL_\eps^{-1} \Pi_0^\perp \xi$. For integer $\m \in \N$, define $\Lambda_\eps = \Lambda_{\eps,\m}$ by
\begin{equation} \label{e:functional_Lambda}
    \Lambda_\eps (u) := \nabla \big( g(u) \big) \prec \P_\m^\perp X_\eps - g(u) \succeq \nabla \P_\m^\perp X_\eps - g(u)\nabla\P_{\m}X_{\eps}\;.
\end{equation}
Since $\m$ is fixed, we simply write $\Lambda_{\eps}$ and drop the dependence of $\m$ in notation. For $\eps=0$, $\Lambda_0$ is the same as above except one replaces $X_\eps$ by $X_0$. These two quantities will appear throughout the article. Now, define stochastic objects
\begin{equation*}
    \fF_\eps := a_\eps \nabla X_\eps\;, \quad \fF_0 := \bar{a} \nabla X_0\;,
\end{equation*}
for associated fluxes. They are a priori well defined for every $\eps>0$ since $a$ is H\"older continuous. The uniformity and convergence in $\eps$ will be justified in Section~\ref{sec:stochastic}.

\subsection*{Organisation of the article}

The rest of the article is organised as follows. In Section~\ref{sec:overview}, we give an overview of the strategy, including a heuristic derivation of the ansatz. In Section~\ref{sec:solution_convergence}, we list the collection of enhanced stochastic objects and set up a fixed point problem based on the ansatz. We then prove uniform boundedness of the solution as well as the convergence to its homogenised limit, assuming convergence of the stochastic objects. The convergence of the flux is proved in Section~\ref{sec:flux_convergence}. In Section~\ref{sec:stochastic}, we give detailed analysis on the stochastic objects and the convergences to their homogenised limits. In the appendices, we provide some background and the preliminary results that are needed in the main text. 

\subsection*{Acknowledgements}

We thank Jun Geng and Jinping Zhuge for helpful discussions. We also thank two reviewers for their careful reading of the first version of the article, and providing many sharp and helpful comments. 

B. Fehrman is supported by NSF DMS-Probability Grant 2348650, the Simons Foundation Grant MPS-TSM-00007753, and the Louisiana BoR RCS Grant 20130014386. W. Xu was supported by Ministry of Science and Technology via the National Key R\&D Program of China (no.2023YFA1010100) and National Science Foundation China via Standard Project Grant (no.12171008) and Key Project Grant (no.12595280, 12595281). Part of the work was done when the first and third authors were visiting NYU Shanghai in Fall 2023. We thank its Institute of Mathematical Sciences for hospitality.

\section{Overview of the strategy}
\label{sec:overview}

Before we give heuristic derivation of our ansatz for \eqref{e:gPAM_formal}, let us first recall the para-controlled ansatz to the constant coefficient two dimensional gPAM model, formally written as
\begin{equation} \label{e:gPAM_constant_formal}
    \d_t u_0 = \lL_0 u_0 + g(u_0) \big( \xi - \infty \, g'(u_0) \big)\;,
\end{equation}
where $\lL_0 = \div (\bar{a} \nabla)$ with a constant positive definite $2 \times 2$ matrix $\bar{a}$. Since the regularity of $\xi$ is below $-1$, one expects $u$, and hence $g(u_0)$, to have regularity at most $1-$. This suggests the product $g(u_0) \cdot \xi$ is problematic. 

We explain how this problem is tackled in \cite{para_control_SPDE}. Let us neglect the initial data as well as the renormalisation term $-\infty \, g'(u_0)$ for the moment, and perform all the operations at a formal level. The mild form of the equation is
\begin{equation} \label{e:mild_standard_PAM}
    \begin{split}
    u_0 &= \iI_0 \big( g(u_0) \prec \xi \big) + \iI_0 \big( g(u_0) \circ \xi \big) + \iI_0 \big( g(u_0) \succ \xi \big)\\
    &= g(u_0) \prec X_0 + \underbrace{\iI_0 \big( g(u_0) \succeq \xi \big) + [\iI_0, \prec] \big( g(u_0), \xi \big) + g(u_0) \prec \big( \iI_0(\xi) - X_0 \big) }_{u_0^{\#}}\;,
    \end{split}
\end{equation}
where $[\iI_0, \prec](A,B) := \iI_0 (A \prec B) - A \prec \iI_0(B)$ is the commutator between $\iI_0$ and $\prec$. One hopes that the rough part of the solution $u$ is given by the term $g(u_0) \prec X_0$, and that the remainder $u_0^{\#}$ has $\cC^{1+}$ regularity to close the fixed point argument. This suggests the ansatz
\begin{equation} \label{e:ansatz_standard_PAM}
    u_0 = g(u_0) \prec X_0 + \underbrace{u_0^\#}_{\cC^{1+}}\;.
\end{equation}
To check it is indeed a right ansatz for \eqref{e:gPAM_constant_formal}, we first note the terms $\iI_0 (g(u_0) \succ \xi)$ and $[\iI_0, \prec](g(u_0), \xi)$ do have the desired regularity a priori (in fact almost $\cC^{2-}$). For the resonance product, a para-linearisation lemma, the ansatz \eqref{e:ansatz_standard_PAM} together with commutator estimates reduce it to
\begin{equation} \label{e:standard_gPAM_resonance}
    g(u_0) \circ \xi - \infty \, g'(u_0) g(u_0) = g'(u_0) g(u_0) \big( X_0 \circ \xi - \infty \big) + \cC^{0+}\;. 
\end{equation}
Here, the remainder being in $\cC^{0+}$ can be seen from commutator estimates. The quantity $X_0 \circ \xi - \infty$ is explicit and can be made sense by hand in $\cC^{0-}$. The action of $\iI_0$ then brings \eqref{e:standard_gPAM_resonance} back to $\cC^{2-}$ and closes the loop. A rigorous treatment of the above heuristics justifies \eqref{e:gPAM_constant_formal} as the $\delta \rightarrow 0$ limit of the regularised model \eqref{e:gPAM_constant}. 

Now let us turn to the homogenisation problem \eqref{e:gPAM_formal}. Similarly, we write
\begin{equation} \label{e:ansatz_first}
    u_\eps = g(u_\eps) \prec \P_\m^\perp X_\eps + u_\eps^{\#}\;,
\end{equation}
where $X_\eps = -\lL_\eps^{-1} \Pi_0^\perp \xi$ is the stationary linear part, and $\P_\m^\perp$ removes the Littlewood-Paley blocks up to $\m-2$. Here, $\m \in \N$ is a fixed large integer to be chosen later (random but independent of $\eps$). The introduction of $\P_\m^\perp$ is to obtain smallness for contraction in the fixed point map later (see also \cite{2d_singular_ns,2d_gpam_global_szz} for similar tricks). This does not affect regularities; the solution obtained in the end also does not depend on the choice of $\m$. At this moment, \eqref{e:ansatz_first} can be viewed as the definition of $u_\eps^{\#}$. But unlike the $\eps=0$ case, both terms in \eqref{e:ansatz_first} cause problems for defining $g(u_\eps) \circ \xi$ uniformly in $\eps$. 

First, unlike $u^{\#} \in \cC^{1+}$ in \eqref{e:ansatz_standard_PAM}, it is \textit{not true} that the remainder $u_\eps^{\#} \in \cC^{1+}$ uniformly in $\eps$. In fact, by replacing $\iI_0$ with $\iI_\eps$ in \eqref{e:mild_standard_PAM}, one can see that $\iI_\eps \big( g(u_\eps) \succeq \xi \big)$ is at most Lipschitz. Furthermore, the usual commutator estimates for $[\iI_0, \prec]$ also fails for $[\iI_\eps, \prec]$ uniformly in $\eps$, and the corresponding commutator term $[\iI_\eps, \prec] \big( g(u_\eps), \xi \big)$ is at most Lipschitz but not better. Hence, $u_\eps^{\#}$ is at most $W^{1,\infty}$ uniformly in $\eps$, preventing the product $u_\eps^{\#} \circ \xi$ being a priori well defined uniformly in $\eps$. Hence, we need to explore finer structures of $u_\eps^{\#}$. 

For the semi-explicit rough term $g(u_\eps) \prec \P_\m^\perp X_\eps$, one can again use commutator estimates to reduce its resonance product with $\xi$ to making sense of the object $\P_\m^\perp X_\eps \circ \xi$. But now $X_\eps$ is obtained from the oscillatory variable coefficient operator $\lL_\eps$, whose Green's function is not conveniently compatible with the Fourier operation $\circ$. Hence, it is not easy to compute $\P_\m^\perp X_\eps \circ {\xi}$ directly. 

We now briefly explain our strategy -- how one uses integration by parts to identify suitable structure of $u_\eps^{\#}$, and how an additional ``completing the product" trick enables one to do explicit computations with pure stochastic objects. Back to the right hand side of \eqref{e:gPAM_formal}, recalling $\xi = - \lL_\eps X_\eps + \Pi_0 \xi$ and using integration by parts, we have
\begin{equation} \label{e:gPAM_formal_RHS_0}
    \begin{split}
    g(u_\eps) \, \xi &= \Pi_0 \xi \cdot g(u_\eps) - g(u_\eps) \cdot \div \big( a_\eps \nabla X_\eps \big)\\
    &= \Pi_0 \xi \cdot g(u_\eps) - \div \big( g(u_\eps) \,\fF_\eps \big) + \nabla^{\sT} \big( g(u_\eps) \big) \cdot \fF_\eps\;.
    \end{split}
\end{equation}
The term $\fF_\eps = a_\eps \nabla X_\eps$ is purely stochastic, and can be made sense by hand in $\cC^{0-}$. Using \eqref{e:ansatz_first}, we have
\begin{equation} \label{e:gradient_gu_expression}
    \nabla \big( g(u_\eps) \big) = g'(u_\eps) \, \nabla u_\eps = g'(u_\eps) \big( \Lambda_\eps (u_\eps) + g(u_\eps) \nabla  X_\eps + \nabla u_\eps^\# \big)\;,
\end{equation}
where we recall from \eqref{e:functional_Lambda} that
\begin{equation*}
    \Lambda_\eps (u) = \nabla \big( g(u) \big) \prec \P_\m^\perp X_\eps - g(u) \succeq \nabla \P_\m^\perp X_\eps - g(u)\nabla\P_{\m}X_{\eps}\;,
\end{equation*}
and we completed the paraproduct $g(u_\eps) \prec \nabla \P_\m^\perp X_\eps$ into the real product $g(u_\eps) \nabla X_\eps$. Combining \eqref{e:gPAM_formal_RHS_0} and \eqref{e:gradient_gu_expression}, we see the right hand side of \eqref{e:gPAM_formal} can be expressed as
\begin{equation} \label{e:gPAM_formal_RHS}
    \begin{split}
    g&(u_\eps) \big( \xi - \infty_\eps \, g'(u_\eps) \big) = \Pi_0 \xi \cdot g(u_\eps) - \div \big( g(u_\eps) \cdot \fF_\eps \big)\\
    &+ g'(u_\eps) \Big[ \big( \Lambda_\eps^\sT (u_\eps) + \nabla^\sT u_\eps^{\#} \big) \fF_\eps +g(u_\eps) \big( \nabla^\sT X_\eps \diamond \fF_\eps \big) \Big]\;,
    \end{split}
\end{equation}
where $\diamond$ denotes the Wick product between two Gaussians. Since one expects $\nabla X_\eps \in \cC^{0-}$ and $u_\eps \in \cC^{1-}$, $\Lambda_\eps (u_\eps)$ should inherit almost the same regularity from $u_\eps$, and hence its product with $\fF_\eps$ is well-defined. The Wick product $\nabla^\sT X_\eps \diamond \fF_\eps$ can now be made sense in $\cC^{0-}$ explicitly by hand, and is sufficient to multiply with $g(u_\eps)$. It remains to understand the product between $\nabla u_\eps^\#$ and $\fF_\eps$, which requires exploration of finer structures of $\nabla u_\eps^{\#}$. 

To proceed, applying $\d_t - \lL_\eps$ to both sides of \eqref{e:ansatz_first} and noting that $u_\eps$ satisfies \eqref{e:gPAM_formal}, we get
\begin{equation*}
    (\d_t - \lL_\eps) u_\eps^{\#} = g(u_\eps) \big( \xi - \infty_\eps \cdot g'(u_\eps) \big) - (\d_t - \lL_\eps) \big( g(u_\eps) \prec \P_{\m}^{\perp} X_\eps \big)\;.
\end{equation*}
The second term on the right hand side above can be computed explicitly as
\begin{equation} \label{e:ansatz_rough_differentiate}
    \begin{split}
    (\d_t - \lL_\eps) \big( g(u_\eps) \prec \P_{\m}^{\perp} X_\eps \big) = &\phantom{1}\d_t \big( g(u_\eps) \big)\prec \P_{\m}^{\perp}X_\eps\\
    &- \div \big( a_\eps \Lambda_\eps (u_\eps) + g(u_\eps) \, \fF_\eps \big)\;.
    \end{split}
\end{equation}
Combining \eqref{e:gPAM_formal_RHS} and \eqref{e:ansatz_rough_differentiate}, we get
\begin{equation} \label{e:equation_u_sharp}
    \begin{split}
    \d_t &u_\eps^{\#} = \lL_\eps u_\eps^{\#} + \div \big( a_\eps \Lambda_\eps (u_\eps) \big) - \underbrace{ \d_t \big( g(u_\eps) \big) \prec \P_{\m}^{\perp}X_\eps}_{\tT_\eps}\\
    &\underbrace{\Pi_{0}\xi\cdot g(u_{\eps})+  g'(u_\eps) \Big[ \big( \Lambda_\eps^\sT (u_\eps) + \nabla^\sT u_\eps^{\#} \big) \fF_\eps +g(u_\eps) \big( \nabla^\sT X_\eps \diamond \fF_\eps \big) \Big] }_{\sS_\eps}\;.
    \end{split}
\end{equation}
Assuming for the moment that $\tT_\eps$ and $\sS_\eps$ on the right hand side above belong to $\cC^{0-}$ uniformly in $\eps$, then from the asymptotic expansion of the Green's function $Q_\eps$, one expects
\begin{equation} \label{e:ansatz_gradient_u_sharp_pre}
    \begin{split}
    \nabla u_\eps^{\#} = &\nabla e^{t \lL_\eps} u_\eps^{\#}(0) + \underbrace{\Phi_\eps}_{L^\infty} \underbrace{\nabla \iI_0 ( \sS_\eps - \tT_\eps)}_{\cC^{1-}} + \underbrace{\rR_\eps (\sS_\eps - \tT_\eps)}_{\text{small} \, \cC^{0+}}\\
    &+ \nabla \iI_\eps \Big( \div \big(a_\eps \Lambda_\eps(u_\eps) \big) \Big)\;,
    \end{split}
\end{equation}
where we recall the definition of $\Phi_\eps$ from \eqref{e:Phi}, and that $\rR_\eps$ denotes the spacetime convolution by the kernel $R_\eps$ given in \eqref{e:difference_first_order}. 

Recall that our goal is to understand the product between $\nabla u_\eps^{\#}$ and $\fF_\eps$. The third term $\rR_\eps (\sS_\eps - \tT_\eps)$ on the right hand side above is regular enough to multiply with $\fF_\eps \in \cC^{0-}$. For the second term, $\nabla \iI_0 (\sS_\eps-\tT_\eps)$ is sufficiently regular and the rough part is the explicit deterministic function $\Phi_\eps$, so its product with $\fF_{\eps}$ can also be made by hand. The initial data term can be written as
\begin{equation*}
    \nabla e^{t \lL_\eps} u_\eps^{\#}(0) = \underbrace{\Phi_\eps}_{L^\infty} \underbrace{\nabla e^{t \lL_0} u_\eps^{\#}(0)}_{\cC^{1-}} + \underbrace{\big( \nabla e^{t \lL_\eps} - \Phi_\eps \nabla e^{t \lL_0} \big) u_\eps^{\#}(0)}_{\text{small} \, \cC^{0+}}\;,
\end{equation*}
which is also of the above structure. 

The subtle term is $\nabla \iI_\eps \big( \div (a_\eps \Lambda_\eps(u_\eps) ) \big)$. It has neither sufficient regularity (even when $\nabla \iI_\eps$ is replaced by $\nabla \iI_0$) nor explicit structures ($u_\eps$ is correlated with $\nabla X_\eps$ in a complicated and inexplicit way). A closer look at this term reveals that its roughness is due to $a_\eps$, while $\Lambda_\eps (u_\eps)$ is sufficiently regular. This motivates us to re-write $\nabla \iI_\eps \big( \div (a_\eps \Lambda_\eps(u_\eps) ) \big)$ as
\begin{equation} \label{e:split_subtle_term}
    \underbrace{\Big( \nabla \iI_\eps \big( \div (a_\eps \Lambda_\eps (u_\eps)) \big) - \nabla \iI_\eps (\div a_\eps) \cdot \Lambda_\eps (u_\eps) \Big)}_{\Phi_\eps \cdot \, \cC^{1-} \, + \; \text{small} \; \cC^{0+}} + \underbrace{\nabla \iI_\eps (\div a_\eps)}_{L_{T,x}^{\infty}} \cdot \underbrace{\Lambda_\eps (u_\eps)}_{\cC^{1-}}\;.
\end{equation}
The first term above can be further decomposed into the sum $\Phi_\eps \tilde{v}_\eps + \tilde{w}_\eps$ for some $\tilde{v}_\eps \in \cC^{1-}$ and $\tilde{w}_\eps$ with a small-in-$\eps$ positive H\"older norm. $\tilde{w}_\eps$ has sufficient regularity to multiply with $\fF_\eps$. The rough part of the first component is the explicit deterministic function $\Phi_\eps$, and its product with $\fF_\eps$ can be handled by hand. The same is true for the second term above, whose rough part $\nabla \iI_\eps (\div a_\eps)$ is also explicit. This allows us to close the loop and solve the fixed point equation uniformly in $\eps$. The convergence to the right homogenised limit requires further exploration on subtle cancellations from the oscillations and the divergence-free structure of $a_\eps \Phi_\eps$. 

The above discussions suggest that $\nabla u_\eps^{\#}$ should have the structure
\begin{equation} \label{e:ansatz_gradient_u_sharp}
    \nabla u_\eps^{\#} = \Phi_\eps \underbrace{v_\eps}_{\cC^{1-}} + \underbrace{w_\eps}_{\text{small} \; \cC^{0+}} + \underbrace{\nabla \iI_\eps (\div a_\eps)}_{\text{deterministic} \, L_{T,x}^{\infty}} \Lambda_\eps (u_\eps)\;,
\end{equation}
where one part of $v_\eps$ and $w_\eps$ come from applications of $\nabla e^{t \lL_0}$ and $\nabla e^{t \lL_\eps} - \Phi_\eps\nabla e^{t \lL_0}$ to the initial data $u_\eps^{\#}(0)$, one part from convolution of $\nabla Q_0$ and $R_\eps$ with a $\cC^{0-}$ function, and the final parts coming from the difference $\nabla \iI_\eps \big( \div (a_\eps \Lambda_\eps (u_\eps)) \big) - \nabla \iI_\eps (\div a_\eps) \cdot \Lambda_\eps (u_\eps)$. In contrast to $u_0^\# \in \cC^{1+}$ in the constant coefficient case, \eqref{e:ansatz_gradient_u_sharp} will be the core part of our ansatz for the homogenisation problem \eqref{e:gPAM_formal}. We will set up a fixed point problem in Section~\ref{sec:solution_convergence} based on slight modifications of \eqref{e:ansatz_gradient_u_sharp}, and prove uniform boundedness of the solutions $u_\eps$ and convergence to the homogenised limit $u_0$.

\begin{rmk}
The above ansatz circumvents the use of the commutator estimates for, with an abuse of notation of $g$,
\begin{equation*}
    \begin{split}
    [\iI_\eps, \prec] (f,g) &:= \iI_\eps (f \prec g) - f \prec \iI_\eps (g)\;,\\
    \text{Com}(f,g,h) &:= (f \prec g) \circ h - f \cdot (g \circ h)\;.
    \end{split}
\end{equation*}
While the usual estimates for the second commutator above is still true (this is purely paraproduct operation and no oscillatory operator is involved), the necessary estimates needed for the first commutator $[\iI_0, \prec]$ fails to hold for $[\iI_\eps, \prec]$ uniformly in $\eps$. 

The integration by parts trick completely circumvents the use of both commutators. Even in the case of constant coefficient case ($\eps=0$), this gives another construction of $2$D gPAM without using commutators. See also Remark~\ref{rmk:commutator_confirm}. 
\end{rmk}

The convergence of the flux $a_\eps \nabla u_\eps$ to $\bar{a} \nabla u_0$ carries another interesting subtlety, namely that two of the terms consisting of the flux converge to the ``wrong" limits individually, but their sum converges to the right limit. 

To see this, according to \eqref{e:ansatz_first} and \eqref{e:ansatz_gradient_u_sharp}, we have
\begin{equation*}
    \nabla u_\eps =  \nabla \big( g(u_\eps) \prec \P_{\m}^{\perp}X_\eps \big)  + \nabla \iI_\eps (\div a_\eps) \cdot\, \Lambda_\eps(u_\eps) + \Phi_\eps v_\eps + w_\eps\;.
\end{equation*}
The first two terms on the right hand side above converge to $\nabla \big( g(u_0) \prec \P_{\m}^{\perp}X_0 \big)$ and $0$ respectively. But when multiplied with $a_\eps$, it turns out that
\begin{equation*}
    \begin{split}
    a_\eps \, \nabla \big( g(u_\eps) \prec  \P_{\m}^{\perp}X_\eps \big) &\rightarrow \bar{a} \, \nabla \big( g(u_0) \prec  \P_{\m}^{\perp}X_0 \big) + (\Pi_{0}a -\bar{a}) \Lambda_0 (u_0)\\
    &\neq \bar{a} \, \nabla \big( g(u_0) \prec \P_{\m}^{\perp}X_0 \big)\;,
    \end{split}
\end{equation*}
where we recall $\Pi_{0} a$ is the average of $a$ on $\T^2$. The interesting point is that $\nabla \iI_\eps ( \div a_\eps ) \rightarrow 0$ weakly in $L^\infty$, and we actually have
\begin{equation*}
    a_\eps \nabla \iI_\eps (\div a_\eps) \rightarrow\bar{a} - \Pi_0 a
\end{equation*}
as $\eps \rightarrow 0$, thus compensating the twisted limit of the first term. They together give the right limit of the flux. We will carry out a detailed analysis in Section~\ref{sec:flux_convergence}.

\begin{rmk}
    In the linear case $g(u) = u$, the transform in \cite{Hairer-Labbe} can significantly simplify the problem, though it is still not as straightforward as the constant coefficient case. To see this, we note that $\widetilde{u}_\eps := u_\eps \, e^{-X_\eps}$ satisfies the equation
    \begin{equation*}
        \d_t \widetilde{u}_\eps = \lL_\eps \widetilde{u}_\eps + \nabla^{\sT} X_\eps \big( a_\eps + a_\eps^\sT \big) \nabla \widetilde{u}_\eps + \big( \nabla^\sT X_\eps \cdot a_\eps \nabla X_\eps - \infty_\eps  \big) \, \widetilde{u}_\eps\;.
    \end{equation*}
    Even though both $\nabla X_\eps$ and $\nabla^{\sT} X_\eps \cdot a_\eps \nabla X_\eps - \infty_\eps$ are well defined (with a proper choice of $\infty_\eps$) and converge in $L_T^\infty \cC_x^{-\kappa}$ for every $\kappa>0$, $\widetilde{u}_\eps$ is not uniformly in $\cC^{1+}$, and the product between $\nabla X_\eps$ and $\nabla \widetilde{u}_\eps$ is not a priori well-defined. In this case, 
    \begin{equation*}
        \nabla \widetilde{u}_\eps = \Phi_\eps \underbrace{\widetilde{v}_\eps}_{\cC^{1-}} + \underbrace{\widetilde{w}_\eps}_{\text{small} \, \cC^{0+}}
    \end{equation*}
    is a suitable ansatz, and one still needs to define the additional processes $\Phi_\eps^\sT a_\eps \nabla X_\eps$ and $\Phi_\eps^\sT a^{\sT}_\eps \nabla X_\eps$ by hand. 
\end{rmk}

\section{Uniform boundedness and convergence of the solution}
\label{sec:solution_convergence}

In this section, we first introduce the collection of enhanced stochastic objects, and then set up a fixed point problem that is suitable to get uniform-in-$\eps$ bounds and convergence of the solution $u_\eps$ to \eqref{e:gPAM_formal}. We prove these facts in Theorem~\ref{thm:fixed_pt_thm}, assuming the convergence of the stochastic objects. 

As indicated in Section~\ref{sec:overview} (see in particular \eqref{e:ansatz_first} and \eqref{e:ansatz_gradient_u_sharp}), the fixed point problem will be a system for a triple $(u_\eps, v_\eps, w_\eps)$. Throughout this section, we fix $\alpha \in (0, 1)$ and $\eta \in \big( 1-\frac{3 \alpha}{4}, 1 - \frac{\alpha}{2} \big)$, and also fix $\kappa>0$ arbitrarily small. For $T>0$, write
\begin{equation}\label{e:space_X}
    \xX_T^1 = \cC_\fs^{\alpha}([0,T] \times \T^2)\;, \quad \xX_T^2 = L_T^{\infty, \eta} \cC_x^{\alpha/4}\;, \quad \xX_T^3 = L_T^{\infty, \eta} \cC_x^{8\kappa}\;,
\end{equation}
and
\begin{equation*}
    \xX_T := \xX_T^1 \times \xX_T^2 \times \xX_T^3\;.
\end{equation*}
We will seek solutions to the system (see \eqref{e:fixed_pt_system} below) in the space $\xX_T$. To simplify notations, we write
\begin{equation*}
    \vec{u}_\eps = (u_\eps, v_\eps, w_\eps)\;, \qquad \vec{u}_0 = (u_0, v_0, 0)\;,
\end{equation*}
and write $\vec{u} = (u,v,w)$ for a generic element in $\xX_T$.

\subsection{Setting up the fixed point equations}
\label{sec:fixed_pt_problem_setup}

We are now ready to set up the fixed point equations. Recall the functional $\Lambda_\eps$ introduced in \eqref{e:functional_Lambda}. Also, $\nabla u_\eps^{\#}$ appears only from part of $\nabla u_\eps$. The latter can be re-written as
\begin{equation*}
    \nabla u_\eps = \nabla \big( g(u_\eps) \prec \P_{\m}^{\perp}X_\eps \big) + \nabla u_\eps^\# = g(u_\eps) \nabla X_\eps + \nabla u_\eps^\# + \Lambda_\eps (u_\eps)\;.
\end{equation*}
Hence, it is natural to define a functional that ``represents" $\nabla u_\eps^{\#}+\Lambda_\eps(u_\eps)$, which we denote by $\aA_\eps$. Precisely, returning to \eqref{e:ansatz_gradient_u_sharp} and adding $\Lambda_\eps (u_\eps)$ to it, we write
\begin{equation} \label{e:functional_A}
    \aA_\eps (\vec{u}) = \aA_\eps(u,v,w) := \Phi_\eps v + w + \big(\id + \nabla \iI_\eps (\div a_\eps) \big) \, \Lambda_\eps (u)\;.
\end{equation}
For $\eps = 0$, define
\begin{equation*}
    \aA_0 (\vec{u}) = \aA_0 (u,v,w) = \aA_0 (u,v) := v + \Lambda_0( u )\;.
\end{equation*}
According to \eqref{e:equation_u_sharp} and \eqref{e:functional_A} (recalling that $\aA_\eps$ ``represents" $\Lambda_\eps (u_\eps) + \nabla u_\eps^{\#}$), we define the functional $\sS_\eps$ by
\begin{equation}\label{e:functional_S_eps}
    \begin{split}
    \sS_\eps (\vec{u}) := \Pi_0 \xi \cdot g(u) + g'(u) \, \aA_\eps^\sT(\vec{u})  \fF_\eps+ g'(u) g(u) \cdot  \nabla^\sT X_\eps \diamond \fF_\eps\;,
    \end{split}
\end{equation}
where $\nabla^\sT X_\eps \diamond \fF_\eps$ is the Wick product between the two Gaussians $\nabla^\sT X_\eps$ and $\fF_\eps = a_\eps \nabla X_\eps$. $\sS_0$ is defined in the same way except replacing $\aA_\eps^{\sT} \fF_\eps$ by $\aA_0^{\sT} \fF_0$ and $\nabla^\sT X_\eps \diamond \fF_\eps$ by $\nabla^\sT X_0 \diamond \fF_0$. 

For $\eps \in \N^{-1}\cup \{0\}$, define the functional $\tT_\eps$ by
\begin{equation} \label{e:functional_T_eps}
    \tT_\eps (\vec{u}) := \widetilde{\tT}_\eps (u,v,w) \prec \P_\m^\perp X_\eps\;,
\end{equation}
where
\begin{equation} \label{e:functional_T_tilde_eps}
    \widetilde{\tT}_\eps (\vec{u}) =  \div \big( g'(u) a_\eps \aA_\eps \big) + g'(u) \sS_\eps - g''(u) \big( \aA_\eps^\sT a_\eps \aA_\eps + g(u) \, \aA_\eps^{\sT} a_\eps^{\sT} \nabla X_\eps \big)\;,
\end{equation}
and $\tT_0$ is defined in the same way except that one replaces all appearances of $X_\eps$, $\aA_\eps$ and $\sS_\eps$ by $X_0$, $\aA_0$ and $\sS_0$, and replaces $a_\eps$ by $\bar{a}$. Here, we also omitted the arguments $\vec{u}=(u,v,w)$ in the functionals $\aA_\eps$ and $\sS_\eps$. The reason to have such an expression is that $\widetilde{\tT}_\eps$ should on one hand be a faithful representation of $\d_t \big( g(u_\eps) \big)$, and on the other hand be an object in $\cC_x^{0-}$ for fixed times. This is not true in general but is true if $u_\eps$ is a ``solution" to \eqref{e:gPAM_formal}. A detailed derivation of \eqref{e:functional_T_tilde_eps} is given in Section~\ref{sec:T}. 

We now turn to writing down the precise fixed point system. Comparing \eqref{e:ansatz_gradient_u_sharp_pre} and \eqref{e:ansatz_gradient_u_sharp} suggests us to further explore the structure of the first term in \eqref{e:split_subtle_term}. Integrating by parts, we get
\begin{equation*}
    \begin{split}
    &\phantom{111} \Big( \nabla \iI_\eps \big( \div (a_\eps F_\eps) \big) \Big)(t,x) - \big( \nabla \iI_\eps (\div a_\eps) \big)(t,x) \cdot F_\eps(t,x)\\
    &= - \int_0^t \int_{\T^2} (\nabla_{x,y}^2 Q_\eps)(t-r,x,y) \,  a_\eps(y) \, \big( F_\eps(r,y) - F_\eps(t,x) \big) {\rm d}y {\rm d}r\;,
    \end{split}
\end{equation*}
where we wrote $F_\eps = \Lambda_\eps (u_\eps)$ for simplicity. With the expansion for $\nabla_{x,y}^2 Q_\eps$ in \eqref{e:difference_second_order}, we can see the contributions to $v_\eps$ and $w_\eps$ in the ansatz \eqref{e:ansatz_gradient_u_sharp} from the term $\nabla \iI_\eps \big( \div (a_\eps F_\eps) \big) - \nabla \iI_\eps (\div a_\eps) F_\eps$. 

Hence, this computation together with \eqref{e:ansatz_first}, \eqref{e:equation_u_sharp}, \eqref{e:ansatz_gradient_u_sharp_pre} and \eqref{e:ansatz_gradient_u_sharp} suggest that we shall consider the system of equations
\begin{equation}\label{e:fixed_pt_system}
    \begin{split}
    u_\eps(t) &= e^{t \lL_\eps} u_\eps^{\#}(0) + g(u_\eps) \prec \P_{\m}^\perp X_\eps + \iI_\eps \Big( \div \big( a_\eps \Lambda_\eps(u_\eps) \big) + \sS_\eps - \tT_\eps \Big)\;,\\
    v_\eps(t) &= \nabla e^{t \lL_0} u_\eps^{\#}(0) + \nabla \iI_0 \big( \sS_\eps - \tT_\eps \big) - \int_{0}^{t} \int_{\T^2} (\nabla_{x,y}^2 Q_0)(t-r,x,y) \cdot\\
    &\phantom{1}\big(\Phi_\eps^*(y)\big)^{\sT} a_\eps(y) \Big( (\Lambda_\eps u_\eps)(r,y) - (\Lambda_\eps u_\eps)(t,x) \Big) {\rm d}y {\rm d}r\\
    w_\eps(t) &= \big(\nabla e^{t \lL_\eps} - \Phi_\eps \nabla e^{t \lL_0}\big) u_\eps^{\#}(0) + \rR_\eps \big( \sS_\eps - \tT_\eps \big)\\
    &\phantom{1}- \int_{0}^{t} \int_{\T^2} \widetilde{\rR}_\eps(t-r,x,y) a_\eps(y) \Big( (\Lambda_\eps u_\eps)(r,y) - (\Lambda_\eps u_\eps)(t,x) \Big) {\rm d}y {\rm d}r\;.
    \end{split}
\end{equation}
Here, $u_\eps^{\#}$ is \textit{defined} as
\begin{equation} \label{e:u_sharp}
    u_{\eps}^{\#} := u_{\eps} - g(u_{\eps}) \prec \P_{\m}^{\perp}X_{\eps}\;,
\end{equation}
and we omitted the inputs $\vec{u}_\eps = (u_\eps, v_\eps, w_\eps)$ in $\sS_\eps$ and $\tT_\eps$. Also, $e^{\lL_\eps}$ and $\nabla_x e^{t \lL_\eps}$ denote space integration with the kernel $Q_\eps(t,x,\cdot)$ and $\nabla Q_\eps(t,x,\cdot)$, and that $\widetilde{\rR}_\eps$ denotes integration with the kernel $\widetilde{R}_\eps$ defined in \eqref{e:difference_second_order}. One can check that if $\xi$ is smooth, then for every $\eps\in \N^{-1}$, the triple $(u_\eps, v_\eps, w_\eps)$ solves the fixed point system \eqref{e:fixed_pt_system} if and only if its first component $u_\eps$ solves the original equation \eqref{e:gPAM_formal}. The reason to set up the fixed point system \eqref{e:fixed_pt_system} is to ensure all the operations are valid and uniform-in-$\eps$ (in the case when $\xi$ is the spatial white noise), and to obtain the convergence as $\eps \rightarrow 0$. 

For $\eps = 0$, the fixed point system for $\vec{u}_0 = (u_0, v_0)$ is
\begin{equation} \label{e:fixed_pt_system_limit}
    \begin{split}
    u_0 &= e^{t \lL_0} u_0^{\#}(0) + g(u_0) \prec \P_{\m}^{\perp}X_0 + \iI_0 \Big( \div \big(\bar{a} \, \Lambda_0 (u_0) \big) + \sS_0 (\vec{u}_0) - \tT_0 (\vec{u}_0) \Big)\\
    v_0 &= \nabla e^{t\lL_0} u_0^{\#}(0) + \nabla \iI_0 \big( \sS_{0} (\vec{u}_0) - \tT_0 (\vec{u}_0) \big) + \nabla \iI_0 \Big( \div \big(\bar{a} \, \Lambda_0 (u_0) \big) \Big)\;,
    \end{split}
\end{equation}
where $\tT_0$ and $\sS_0$ are functionals on $\vec{u} = (u,v)$ whose expressions are the same with $\tT_\eps$ and $\sS_\eps$ by replacing all appearances of $\eps$ by $0$ and $a_\eps$ by $\bar{a}$. One can check directly that the system \eqref{e:fixed_pt_system_limit} admits a unique solution $(u_0, v_0)$, which is precisely the solution to the standard constant coefficient 2D gPAM. 

Recall that $\alpha \in (0, 1)$, $\eta \in ( 1-\frac{3\alpha}{4}, 1-\frac{\alpha}{2} )$, and $\kappa>0$ is arbitrarily small. Recall also the definition of $\xX_T = \xX_T^1 \times \xX_T^2 \times \xX_T^3$ from \eqref{e:space_X}. We will show that the system \eqref{e:fixed_pt_system} has a unique solution $\vec{u}_\eps = (u_\eps, v_\eps, w_\eps) \in \xX_T$ up to a random time $T>0$. Moreover, if the initial data converges in $\cC^\alpha$ as $\eps \rightarrow 0$, then the triple $\vec{u}_\eps$ converges to a limit $\vec{u}_0 = (u_0, v_0, 0)$, which solves the limiting homogenised equation \eqref{e:fixed_pt_system_limit}. 

The first problem is that the flux $\fF_\eps$ is uniformly bounded in $\cC^{-\kappa}$ for every $\kappa>0$ but not in $L^p$ for any $p$, while $\aA_\eps$ given in \eqref{e:functional_A} is only uniformly bounded in $L^\infty$ in space. Hence, the product $\aA_\eps^\sT \fF_\eps$ is not a priori well defined. On the other hand, using the expression \eqref{e:functional_A}, we have
\begin{equation} \label{e:expression_A_flux}
    \aA_\eps^{\sT}(\vec{u}) \fF_\eps = v^\sT \Phi_\eps^\sT \fF_\eps + w^\sT \fF_\eps + \Lambda_\eps^\sT (u) \cdot \big( \id + \nabla \iI_\eps (\div a_\eps) \big)^\sT \fF_\eps \;. 
\end{equation}
We will take $\fF_\eps$ and $\Phi_\eps^{\sT} \fF_\eps$ as pre-defined noises, and view the right hand side of \eqref{e:expression_A_flux} as the expression or definition of $\aA_\eps^{\sT}(\vec{u}) \fF_\eps$. The object $\big( \id + \nabla \iI_\eps (\div a_\eps) \big)^\sT \fF_\eps$ can be obtained from $\fF_\eps$ and $\Phi_\eps^{\sT} \fF_\eps$ (see Lemma~\ref{lem:extra_0} below). 

Similarly, the term $\aA_\eps^{\sT} a_\eps^\sT \nabla X_\eps$ from $\widetilde{\tT}_\eps$ has the expression
\begin{equation} \label{e:expression_Aa_ff}
    \begin{split}
    \aA_\eps^{\sT}(\vec{u}) a_\eps^{\sT} \nabla X_\eps = &\phantom{1}v^{\sT} \Phi_\eps^{\sT} a_\eps^{\sT} \nabla X_\eps + w_\eps^{\sT} a_\eps^{\sT} \nabla X_\eps\\
    &+ \Lambda_\eps^{\sT}(\vec{u}) \big( \id + \nabla \iI_\eps (\div a_\eps) \big)^\sT a_\eps^{\sT} \nabla X_\eps\;.
    \end{split}
\end{equation}
This time, we take $a_\eps^{\sT} \nabla X_\eps$ and $\Phi_\eps^\sT a_\eps^\sT \nabla X_\eps$ as pre-defined noises, and take the right hand side of \eqref{e:expression_Aa_ff} as the expression or definition of $\aA_\eps^{\sT}(\vec{u}) a_\eps^{\sT} \nabla X_\eps$. The term $\big( \id + \nabla \iI_\eps (\div a_\eps) \big)^\sT a_\eps^{\sT} \nabla X_\eps$ is obtained from $a_\eps^{\sT} \nabla X_\eps$ and $\Phi_\eps^\sT a_\eps^\sT \nabla X_\eps$. 

From the above discussions, it is natural to introduce the seven dimensional vectors of stochastic objects
\begin{equation} \label{e:noises_eps}
    \begin{split}
    \Upsilon_\eps &:= \big(\Pi_0 \xi, \; X_\eps,\; \fF_\eps,\; \Phi_\eps^\sT \fF_\eps,\; a_\eps^{\sT} \nabla X_\eps, \; \Phi_\eps^{\sT} a_\eps^{\sT} \nabla X_\eps, \; \nabla^\sT X_\eps \diamond \fF_\eps \big)\;,
    \end{split}
\end{equation}
and
\begin{equation} \label{e:noises_limit}
    \begin{split}
    \Upsilon_0 := \big(\Pi_0 \xi, \; X_0,\; \fF_0,\; \fF_0,\; \Pi_0 (a^{\sT} \Phi) \nabla X_0, \; \bar{a}^{\sT} \nabla X_0, \; \nabla^\sT X_0 \diamond \fF_0 \big)\;.
    \end{split}
\end{equation}
For $1 \leq j \leq 7$, we denote the corresponding components in order by $\Upsilon_\eps^{(j)}$ and $\Upsilon_0^{(j)}$. We then define the norm $\yY$ by
\begin{equation} \label{e:noises_norm}
    \|\Upsilon_\eps\|_{\yY} := |\Pi_0 \xi| + \|X_\eps\|_{\cC^{1-\kappa}} + \sum_{j = 3}^{7} \|\Upsilon_\eps^{(j)}\|_{\cC^{-\kappa}} \;.
\end{equation}
The well-posedness of these ``enhanced" noises and convergence to the right limits will be shown in Section~\ref{sec:stochastic}. 

To summarise, we list below the collection of the stochastic objects, their corresponding limits, and spaces of convergences.
\begin{center} 
\begin{tabular}{|c c c c c c c c|} 
 \hline
 Process: &  $\Pi_0 \xi$ & $X_\eps$ & $\fF_\eps$ & $\Phi_\eps^\sT \fF_\eps$  &  $a_\eps^{\sT} \nabla X_\eps$  &  $\Phi_\eps^{\sT} a_\eps^{\sT} \nabla X_\eps$  &  $\nabla^{\sT} X_\eps \diamond \fF_\eps$  \\ 
 \hline
 Limit: &  $\Pi_0 \xi$ & $X_0$ & $\fF_0$ & $\fF_0$  &  $\Pi_0 (a^{\sT} \Phi) \nabla X_0$  &  $\bar{a}^{\sT} \nabla X_0$ & $\nabla^{\sT} X_0 \diamond \fF_0$  \\ 
 \hline
 Space: &  $\R$ & $\cC^{1-\kappa}$ & $\cC^{-\kappa}$ & $\cC^{-\kappa}$ &  $\cC^{-\kappa}$  &  $\cC^{-\kappa}$ &  $\cC^{-\kappa}$ \\
 \hline
\end{tabular}
\end{center}

\begin{rmk}
    The standard gPAM with a Laplacian (that is, $\bar{a} = \id$) has two (enhanced) stochastic objects: $\nabla X_0$ and $\nabla^\sT X_0 \diamond \fF_0$  (\cite{para_control_SPDE, rs_theory, Hairer-Labbe}). At first glance, there seem to be many more stochastic objects in our setting. Even in the constant-coefficient case $\eps = 0$, there are three more: $\fF_0$, $\bar{a}^{\sT}\nabla X_0$ and $\Pi_0(a^{\sT}\Phi)\nabla X_0$. Actually, we note that since $\bar{a}$, $\bar{a}^\sT$ and $\Pi_0(a^{\sT}\Phi)$ are constant matrices, all these three objects are well-defined and completely determined by $\nabla X_0$. Hence, the effective ones are just the two in the standard situation.
    
    For $\eps \in (0,1)$, the linear flux $\fF_\eps = a_\eps \nabla X_\eps$ is necessary, as well as its variants $\Phi_\eps^\sT \fF_\eps$, $a_\eps^{\sT} \nabla X_\eps$ and $\Phi_\eps^{\sT} a_\eps^{\sT} \nabla X_\eps$, even though their limits are a priori well defined. This is a reflection that approximations in homogenisation problems typically have worse behaviours than their limits. 
\end{rmk}

\begin{rmk}
The limit of $a_\eps^{\sT} \nabla X_\eps$ is $\Pi_0 (a^{\sT} \Phi) \nabla X_0$, where
\begin{equation*}
    \Pi_0 (a^{\sT} \Phi) = \int_{\T^2} a^{\sT}(y) \Phi(y) {\rm d}y = \int_{\T^2} a^{\sT}(y) \big( \id + \nabla \chi(y) \big) {\rm d}y\;.
\end{equation*}
It may appear strange at first since $\Pi_0(a^{\sT} \Phi)$ is neither $\bar{a}$ nor $\bar{a}^{\sT}$, and one may wonder what is its role in the homogenised limit. What really happens is that all appearances of the object $a_\eps^{\sT} \nabla X_\eps$ in the fixed point system always come with multiplication by a function with a positive H\"older norm vanishing as $\eps \rightarrow 0$. Thus, even though the object $a_\eps^{\sT} \nabla X_\eps$ is needed in the solution for fixed $\eps>0$, its limit $\Pi_0 (a^{\sT} \Phi) \nabla X_0$ actually never appears in the homogenised equation. 
\end{rmk}

We are now ready to state our main theorem.

\begin{thm} \label{thm:fixed_pt_thm}
    Let $\alpha \in (0, 1)$ and $\eta \in \big( 1-\frac{3\alpha}{4}, 1-\frac{\alpha}{2} \big)$. Let $\{u_\eps(0)\}_{\eps\in \N^{-1}}$ be a sequence of functions in $\cC^\alpha(\T^2)$. For every $\eps = \eps_N = \frac{1}{N}$ $($including $\eps = 0)$, there exists $($a random$)$ $T_\eps>0$ such that the system \eqref{e:fixed_pt_system} has a unique solution $\vec{u}_\eps = (u_\eps, v_\eps, w_\eps) \in \xX_{T_\eps}$ almost surely. For $\eps = 0$, one takes the system \eqref{e:fixed_pt_system_limit} and solution $\vec{u}_0 = (u_0, v_0, 0)$. Moreover, the local existence time $T_\eps > 0$ and $\|\vec{u}_\eps\|_{\xX_{T_\eps}}$ depend on $\eps$ via $\|\Upsilon_\eps\|_{\yY}$ and $\|u_\eps(0)\|_{\cC^\alpha}$ only. In particular, if $\|\Upsilon_\eps\|_{\yY}$ and $\|u_\eps(0)\|_{\cC^\alpha}$ are uniformly bounded in $\eps$, then so is the local existence time $T_\eps$ and the norm $\|\vec{u}_\eps\|_{\xX_{T_\eps}}$. 

    Suppose $u_0(0) \in \cC^{\alpha'}$ for some $\alpha'>\alpha$ and that
    \begin{equation*}
        \|u_\eps(0) - u_0(0)\|_{\cC^\alpha} \rightarrow 0\;,  \qquad \|\Upsilon_\eps - \Upsilon_0\|_{\yY} \rightarrow 0
    \end{equation*}
    as $\eps \rightarrow 0$. Suppose further the solution $\vec{u}_0 = (u_0, v_0)$ with initial data $u(0)$ exists up to (a random) time $T_0 > 0$ with
    \begin{equation*}
        \|\vec{u}_0\|_{\xX_{T_0}} + \|\vec{u}_0\|_{L_{T_0}^\infty \cC_x^{\alpha'}} < +\infty\;.
    \end{equation*}
    Then for every $T<T_0$, there exists $\eps_0 > 0$ such that for every $\eps < \eps_0$, the solution $\vec{u}_\eps = (u_\eps, v_\eps, w_\eps)$ to \eqref{e:fixed_pt_system} also exists on $[0,T]$. Furthermore, there exists $\theta>0$ such that
    \begin{equation*}
        \|\vec{u}_\eps - \vec{u}_0\|_{\xX_T} \lesssim \eps^{\theta} + \|u_\eps(0) - u(0)\|_{\cC^\alpha} + \|\Upsilon_\eps - \Upsilon_0\|_{\yY}\;.
    \end{equation*}
   This in particular implies $u_\eps \rightarrow u_0$ in $\cC_{\fs}^{\alpha}([0,T] \times \T^2)$. 
\end{thm}

\begin{rmk}
One can take any $\alpha \in (0,1)$ as the regularity exponent for $u_\eps$, which is the main part of the solution. The reason $\alpha$ can be close to $0$ is that all the stochastic objects appearing in the system have regularities just below $0$, and does not depend on $\alpha$. The choices of regularity and weight exponents for the other two components of the solution ($v_\eps$ and $w_\eps$) are also flexible. The only constraint is that they need to have positive regularities to multiply with enhanced stochastic objects, and that the regularities and weights should be consistent so that one can close the fixed point problem. The choices $\alpha/4$ and $8\kappa$ made here are somewhat arbitrary. 
\end{rmk}

\subsection{Some preliminary lemmas}

We give bounds on various terms on the right hand sides of \eqref{e:fixed_pt_system} that are necessary in establishing the fixed point theorem. All the proportionality constants below depend on $g$ via the $L^\infty$-norm of its first three derivatives. 

In what follows, we fix the exponents $\alpha$ and $\eta$ as
\begin{equation} \label{eq:range_parameters}
    \alpha \in (0,1)\;, \qquad \eta \in \Big( 1 - \frac{3\alpha}{4}, 1 - \frac{\alpha}{2} \Big)\;,
\end{equation}
which are the same as in the statement of Theorem~\ref{thm:fixed_pt_thm}. We also fix an arbitrary $T^* > 0$, and the time interval we consider is always $[0,T]$ for $T \leq T^*$. All the proportionality constants are allowed to depend on $T^*$ (and $\alpha$ and $\eta$ as well), but are uniform in $T \in [0,T^*]$. 

We recall the functional $\Lambda_\eps$ from \eqref{e:functional_Lambda}, which essentially all quantities will depend on. We also recall the functionals $\aA_\eps$ from \eqref{e:functional_A}, $\sS_\eps$ from \eqref{e:functional_S_eps} and $\tT_\eps$ from \eqref{e:functional_T_eps}. Finally, recall the definition of the weighted space $L_T^{\infty,\sigma} \zZ$ from \eqref{e:space_weight}, and the parabolic spacetime norm $\cC_\fs^\gamma = \cC_\fs^\gamma([0,T] \times \T^2)$ from \eqref{e:spacetime}.

\begin{lem} \label{lem:g_locally_Lip}
    Let $\alpha \in (0,1)$. We have
    \begin{equation*}
        \|g(u)\|_{\cC_\fs^\alpha} \lesssim 1 + \|u\|_{\cC_\fs^{\alpha}}\;,
    \end{equation*}
    and
    \begin{equation*}
        \|g(u_1) - g(u_2)\|_{\cC_{\fs}^{\alpha}} \lesssim \big(1 + \|u_1\|_{\cC_\fs^{\alpha}} \big) \cdot \|u_1 - u_2\|_{\cC_{\fs}^{\alpha}}\;.
    \end{equation*}
\end{lem}
\begin{proof}
    The first bound is straightforward. For the second one, we need to show that the difference satisfies
    \begin{equation*}
        \begin{split}
        &\phantom{11} \left| \Big( g \big(u_1(z) \big) - g \big(u_2(z) \big) \Big) - \Big( g \big( u_1 (z') \big) - g \big( u_2(z') \big) \Big) \right|\\
        &\lesssim \big(1 + \|u_1\|_{\cC_\fs^{\alpha}} \big) \|u_1 - u_2\|_{\cC_{\fs}^{\alpha}} \cdot |z-z'|_{\fs}^\alpha
        \end{split}
    \end{equation*}
    uniformly over spacetime points $z,z' \in \R^+ \times \T^2$. Here, $|z-z'|_{\fs}$ denotes the parabolic distance between $z = (t,x)$ and $z' = (t',x')$ in the sense that
    \begin{equation*}
        |z-z'|_{\fs} := \sqrt{|t-t'|} + |x-x'|\;.
    \end{equation*}
    We distinguish the cases $\|u_1\|_{\cC_\fs^\alpha} |z-z'|_\fs^\alpha \lesssim \|u_1 - u_2\|_{\cC_{\fs}^{\alpha}}$ and $\|u_1\|_{\cC_\fs^\alpha} |z-z'|_\fs^\alpha \gtrsim \|u_1 - u_2\|_{\cC_{\fs}^{\alpha}}$. The desired bound then follows. 
\end{proof}

\begin{lem} \label{lem:para_term}
    Let $\alpha \in (0,1)$. We have the pointwise and Lipschitz estimates
    \begin{equation*}
        \|g(u) \prec \P_\m^\perp X_\eps\|_{\cC_\fs^{\alpha}} \lesssim \m^{-(1-\alpha-\kappa)} \big( 1 + \|u\|_{\cC_\fs^\alpha} \big) \|X_\eps\|_{ {\cC^{1-\kappa} }}
    \end{equation*}
    and
    \begin{equation*}
        \begin{split}
        &\phantom{111} \big\| \big( g(u_1) - g(u_2) \big) \prec \P_\m^\perp X_\eps \big\|_{\cC_{\fs}^{\alpha}}\\
        &\lesssim \m^{-(1-\alpha-\kappa)} \|X_\eps\|_{\cC^{1-\kappa}} \big( 1 + \|u_1\|_{\cC_\fs^\alpha} \big) \|u_1 - u_2\|_{\cC_\fs^\alpha}\;.
        \end{split}
    \end{equation*}
    In addition, we have the error estimate
    \begin{equation*}
        \begin{split}
        &\phantom{111}\big\| g(u_\eps) \prec \P_\m^{\perp} X_\eps - g(u_0) \prec \P_\m^{\perp} X_0  \big\|_{\cC_\fs^\alpha}\\
        &\lesssim \m^{-(1-\alpha-\kappa)} \, \big( 1 + \|u_0\|_{\cC_\fs^\alpha} \big) \cdot \big( \|X_\eps\|_{\cC^{1-\kappa}} \|u_\eps - u_0\|_{\cC_\fs^\alpha} + \|X_\eps - X_0\|_{\cC^{1-\kappa}} \big)\;.
        \end{split}
    \end{equation*}
\end{lem}
\begin{proof}
    By the para-product estimate in Lemma~\ref{lem:Bony} and the truncation estimate in Lemma~\ref{lem:Besov_cutoff}, we have
    \begin{equation*}
        \|g(u) \prec \P_\m^\perp X_\eps\|_{\cC_\fs^\alpha} \lesssim \|g(u)\|_{\cC_\fs^\alpha} \|\P_\m^\perp X_\eps\|_{\cC^{\alpha}} \lesssim \m^{-(1-\alpha-\kappa)} \|g(u)\|_{\cC_\fs^\alpha} \|X_\eps\|_{{\cC^{1-\kappa}}}\;.
    \end{equation*}
    The first claimed bound then follows immediately from Lemma~\ref{lem:g_locally_Lip}. The second and third one follow from the linearity in $g(u)$ and $\P_\m^{\perp} X_\eps$ and Lemma~\ref{lem:g_locally_Lip}. 
\end{proof}

\begin{lem} \label{lem:Lambda}
    Let $\alpha \in (0,1)$. For every sufficiently small $\kappa>0$, we have
    \begin{equation} \label{e:Lambda_uniform}
        \|\Lambda_\eps (u)\|_{\cC_{\fs}^{\alpha-2\kappa}} \lesssim (1+\m^{\alpha}) \, (1+\|u\|_{\cC_{\fs}^\alpha}) \, \|X_\eps\|_{\cC^{1-\kappa}}\;,
    \end{equation}
    and
    \begin{equation} \label{e:Lambda_Lipschitz}
        \begin{split}
        &\phantom{111}\|\Lambda_\eps(u_1) - \Lambda_\eps(u_2)\|_{\cC_{\fs}^{\alpha-2\kappa}}\\
        &\lesssim (1+\m^{\alpha}) \big(1 + \|u_1\|_{\cC_\fs^{\alpha}} \big) \, \|X_\eps\|_{\cC^{1-\kappa}} \, \|u_1 - u_2\|_{\cC_{\fs}^\alpha}\;.
        \end{split}
    \end{equation}
    As a consequence, we have
    \begin{equation} \label{e:Lambda_error_bound}
        \begin{split}
        \|\Lambda_\eps (u_\eps) - \Lambda_0 (u_0) &\|_{\cC_\fs^{\alpha-2\kappa}} \lesssim (1+\m^{\alpha}) \, \big( 1 + \|u_0\|_{\cC_\fs^\alpha} \big)\\
        &\cdot \big( \|X_\eps\|_{\cC^{1-\kappa}} \cdot \|u_\eps - u_0\|_{\cC_\fs^\alpha} + \|X_\eps - X_0\|_{\cC^{1-\kappa}} \big)\;.
        \end{split}
    \end{equation}
\end{lem}
\begin{proof}
    Recall the expression \eqref{e:functional_Lambda} for $\Lambda_\eps$. It suffices to show the bounds
    \begin{equation} \label{e:Lambda_bound}
        \begin{split}
        \|\nabla \big( g(u) \big) \prec  \P_{\m}^{\perp}X_\eps\|_{\cC_\fs^{\alpha-\kappa}} &\lesssim \|g(u)\|_{\cC_\fs^\alpha} \|X_\eps\|_{\cC^{1-\kappa}}\;,\\
        \|g(u) \succeq \nabla \P^{\perp}_{\m} X_\eps\|_{\cC_\fs^{\alpha-\kappa}} &\lesssim \|g(u)\|_{\cC_\fs^\alpha} \|X_\eps\|_{\cC^{1-\kappa}}\;,\\
        \|g(u)  \nabla \P_{\m} X_\eps\|_{\cC_\fs^{\alpha-\kappa}} &\lesssim \m^{\alpha} \|g(u)\|_{\cC_\fs^\alpha} \|X_\eps\|_{\cC^{1-\kappa}} \;.
        \end{split}
    \end{equation}
    The uniform bound \eqref{e:Lambda_uniform} will follow directly from the three bounds in \eqref{e:Lambda_bound} and Lemma~\ref{lem:g_locally_Lip}. The Lipschitz and error estimates \eqref{e:Lambda_Lipschitz} and \eqref{e:Lambda_error_bound} will follow in addition from that $\Lambda_\eps (u)$ is bilinear in $g(u)$ and $X_\eps$. 

    For all bounds in \eqref{e:Lambda_bound}, the $L_T^{\infty} \cC_x^{\alpha-2\kappa}$ parts follow from Lemmas~\ref{lem:Bony} and~\ref{lem:Besov_cutoff} directly, so we only consider the $\cC_T^{\frac{\alpha}{2}-\kappa} L_x^\infty$ parts of the norms. We give details for the first one in \eqref{e:Lambda_bound} only. We have
    \begin{equation*}
        \begin{split}
        &\phantom{111}\big\| \big( \nabla (g \circ u)(t) - \nabla (g \circ u)(s) \big) \prec \P_\m^\perp X_\eps \|_{L_x^\infty}\\
        &\lesssim \big\| \nabla (g \circ u)(t) - \nabla (g \circ u)(s) \|_{\cC_x^{-1+2\kappa}} \|\P_\m^\perp X_\eps\|_{\cC^{1-\kappa}}\\
        &\lesssim \|g\big(u(t)\big) - g\big( u(s) \big)\|_{\cC_x^{2\kappa}} \|X_\eps\|_{\cC^{1-\kappa}}\;.
        \end{split}
    \end{equation*}
    By interpolation, we have
    \begin{equation*}
        \begin{split}
        \|g\big(u(t)\big) - g\big( u(s) \big)\|_{\cC_x^{2\kappa}} &\lesssim \|g\big(u(t)\big) - g\big( u(s) \big)\|_{L_x^\infty}^{1-\frac{2\kappa}{\alpha}} \|g\big(u(t)\big) - g\big( u(s) \big)\|_{\cC_x^\alpha}^{\frac{2\kappa}{\alpha}}\\
        &\lesssim (t-s)^{\frac{\alpha}{2}-\kappa} \|g(u)\|_{\cC_\fs^\alpha}\;.
        \end{split}
    \end{equation*}
    This gives the $\cC_T^{\frac{\alpha}{2}-\kappa}L_x^\infty$ part of the first one in \eqref{e:Lambda_bound}. The other two bounds can be obtained in the same way. This completes the proof of the lemma. 
\end{proof}

The next lemma shows that $\id + \nabla \iI_\eps (\div a_\eps)$ behaves like $\Phi_\eps$. As a consequence, the stochastic object $\big( \id + \nabla \iI_\eps (\div a_\eps) \big)^{\sT} \fF_\eps$ can be obtained from $\fF_\eps$ and $\Phi_\eps^\sT \fF_\eps$ without referring to any extra information. 

\begin{lem} \label{lem:extra_0}
    We have the uniform bound
    \begin{equation} \label{e:extra_0_uniform}
        \|\nabla \iI_\eps (\div a_\eps)\|_{L_{T,x}^\infty} \lesssim 1\;.
    \end{equation}
    Furthermore, for every $\sigma>0$, there exist $\theta, \beta > 0$ such that
    \begin{equation} \label{e:extra_0_error}
        \big\| \id + \nabla \iI_\eps (\div a_\eps) - \Phi_\eps + \Phi_\eps  \nabla e^{t \lL_0} \big( \eps \chi(\cdot/\eps) \big) \big\|_{\cC_x^\beta} \lesssim \eps^\theta \cdot t^{-\frac{\sigma}{2}}\;.
    \end{equation}
    As a consequence, for every $\sigma>0$, there exists (a possibly different) $\theta>0$ such that for all sufficiently small $\kappa>0$, we have the bounds
    \begin{equation} \label{e:extra_0_stochastic}
        \begin{split}
        &\phantom{111}\big\| \big( \id + \nabla \iI_\eps (\div a_\eps) \big)^{\sT} \fF_\eps - \fF_0  \big\|_{\cC_x^{-\kappa}}\\
        &\lesssim \eps^{\theta} t^{-\frac{\sigma}{2}} \big( \|\fF_\eps\|_{\cC^{-\kappa}}  + \|\Phi_\eps^\sT \fF_\eps\|_{\cC^{-\kappa}} \big) + \|\Phi_\eps^{\sT} \fF_\eps - \fF_0\|_{\cC^{-\kappa}}\;,
        \end{split}
    \end{equation}
    and
    \begin{equation} \label{e:extra_00_stochastic}
        \begin{split}
        &\phantom{111}\big\| \big( \id + \nabla \iI_\eps (\div a_\eps) \big)^{\sT} a_\eps^\sT \nabla X_\eps - \Phi_\eps^\sT a_\eps^\sT \nabla X_\eps \big\|_{\cC_x^{-\kappa}}\\
        &\lesssim \eps^{\theta} t^{-\frac{\sigma}{2}} \big( \|a_\eps^\sT \nabla X_\eps\|_{\cC^{-\kappa}}  + \|\Phi_\eps^\sT a_\eps^\sT \nabla X_\eps\|_{\cC^{-\kappa}} \big)\;.
        \end{split}
    \end{equation}
\end{lem}
\begin{proof}
By definition of the corrector \eqref{eq: homo_corrector}, we have
\begin{equation*}
    \div (a_\eps) = - \div \Big( a_\eps (\nabla \chi)(\cdot / \eps) \Big) = - \lL_\eps \big( \eps \chi(\cdot / \eps) \big)\;.
\end{equation*}
Applying the operation $\nabla \iI_\eps$ on both sides above, we get
\begin{equation} \label{e:I_eps_div_a}
    \nabla \iI_\eps (\div a_\eps) = - \nabla \iI_\eps \lL_\eps \big( \eps \chi(\cdot / \eps) \big) = (\nabla \chi)(\cdot/\eps) - \nabla e^{t \lL_\eps} \big( \eps \chi(\cdot/\eps) \big)\;.
\end{equation}
The uniform bound \eqref{e:extra_0_uniform} then follows from \eqref{e:initial_space} (with $\beta=\gamma=1$) and that $\eps \chi (\cdot/\eps)$ is uniformly Lipschitz. 

Now, add the identity $\id$ to both sides of \eqref{e:I_eps_div_a}. By definition of $\Phi_\eps$, we have
\begin{equation*}
    \id + \nabla \iI_\eps (\div a_\eps) - \Phi_\eps + \Phi_\eps \nabla e^{t \lL_0} \big( \eps \chi(\cdot/\eps) \big) = - \big( \nabla e^{t \lL_\eps} - \Phi_\eps \nabla e^{t \lL_0} \big) \big( \eps \chi(\cdot/\eps) \big)\;.
\end{equation*}
The bound \eqref{e:extra_0_error} then follows from Lemma~\ref{lem:initial_error} (with $\gamma=1$) and that $\eps \chi (\cdot / \eps)$ is uniformly Lipschitz. 

Note that $\kappa$ is chosen arbitrarily small (in particular smaller than $\frac{\sigma}{2}$ and $\beta$ for the $\sigma$ and $\beta$ in \eqref{e:extra_0_error}). As a consequence, we have
\begin{equation*}
    \begin{split}
    &\phantom{111}\big\| \big( \id + \nabla \iI_\eps (\div a_\eps) \big)^{\sT} \fF_\eps - \Phi_\eps^{\sT} \fF_\eps \big\|_{\cC_x^{-\kappa}}\\
    &\lesssim \big\| \nabla e^{t \lL_0} \big( \eps \chi(\cdot/\eps) \big) \big\|_{\cC_x^{2\kappa}} \|\Phi_\eps^{\sT} \fF_\eps\|_{\cC^{-\kappa}} + \eps^\theta t^{-\frac{\sigma}{2}} \|\fF_\eps\|_{\cC^{-\kappa}}\\
    &\lesssim \eps^\sigma t^{-\frac{2\kappa+\sigma}{2}} \|\Phi_\eps^{\sT} \fF_\eps\|_{\cC^{-\kappa}} + \eps^\theta t^{-\frac{\sigma}{2}} \|\fF_\eps\|_{\cC^{-\kappa}}\;.
    \end{split}
\end{equation*}
This implies \eqref{e:extra_0_stochastic} with a different but still arbitrarily small $\sigma$ (since $\kappa$ can be arbitrarily small). Replacing $\fF_\eps$ by $a_\eps^{\sT} \nabla X_\eps$ gives \eqref{e:extra_00_stochastic}. We have thus completed the proof of the lemma. 
\end{proof}

\begin{rmk} \label{rmk:Linfty}
    One shall compare the bound \eqref{e:extra_0_uniform} with Lemma~\ref{lem:Lp_critical}. The claim in Lemma~\ref{lem:Lp_critical} is in general false for $p=+\infty$. The reason one has an $L_{T,x}^\infty$ bound here is that the oscillation in the operator $\iI_\eps$ is the same as that of $a_\eps$ which is in the divergence operator on the right hand side. 
\end{rmk}

In what follows, all quantities in consideration are linear in $v$, $w$ and $\Lambda(u)$, and at most quadratic in $g(u)$ and $g'(u)$. Hence, we only provide their uniform-in-$\eps$ bounds and error estimates. The local Lipschitzness (that is, difference of the quantities in terms of $(u_1,v_1,w_1) - (u_2,v_2,w_2)$) follows from the uniform bounds, the linearities in $v$ and $w$, and the local Lipschitzness of $g$ and $\Lambda_\eps$.

% \begin{lem} \label{lem:a_A}
%     For every $\sigma>0$, there exists $\theta>0$ such that
%     \begin{equation*}
%         \begin{split}
%         \big\| a_\eps \aA_\eps (\vec{u}_\eps) - \bar{a} &\aA_0 (\vec{u}_0)  \big\|_{L_t^{\infty,\sigma}\cC_{x}^{-\kappa}} \lesssim \eps^\theta \big( \|\vec{u}_\eps\|_{\xX_t} + \|\Lambda_\eps (u_\eps)\|_{\cC_\fs^{\alpha-2\kappa}} \big)\\
%         &+ \|\vec{u}_\eps - \vec{u}_0\|_{\xX_t} + \|\Lambda_\eps (u_\eps) - \Lambda_0 (u_0)\|_{\cC_\fs^{\alpha-2\kappa}}\;.
%         \end{split}
%     \end{equation*}
% \end{lem}
% \begin{proof}
% By definition, we have
% \begin{equation*}
%     \begin{split}
%     a_\eps \aA_\eps &(\vec{u}_\eps) - \bar{a} \aA_0 (\vec{u}_0) = (a_\eps \Phi_\eps - \bar{a}) v_\eps + \bar{a} (v_\eps - v_0) + a_\eps w_\eps\\
%     &+ \Big( a_\eps \big( \id + \nabla \iI_\eps (\div a_\eps) \big) - \bar{a} \Big) \Lambda_\eps (u_\eps) + \bar{a} \big( \Lambda_\eps (u_\eps) - \Lambda_0 (u_0) \big)\;.
%     \end{split}
% \end{equation*}
% The claim then follows directly from that $\|a_\eps \Phi_\eps - \bar{a}\|_{\cC^{-\kappa}} \lesssim \eps^\theta$ and \eqref{e:extra_0_flux}. 
% \end{proof}

\begin{lem} \label{lem:A_F}
    Let $\alpha, \eta$ be as in \eqref{eq:range_parameters}. Recall the expression $\aA_\eps^\sT \fF_\eps$ given in \eqref{e:expression_A_flux}. We have the uniform bound
    \begin{equation*}
        \|\aA_\eps^\sT \big(\vec{u}_\eps \big) \fF_\eps\|_{L_T^{\infty,\eta} \cC_x^{-\kappa}} \lesssim \big( \|\vec{u}_\eps\|_{\xX_T} + \|\Lambda_\eps (u_\eps)\|_{\cC_\fs^{\alpha-2\kappa}} \big) \|\Upsilon_\eps\|_{\yY}\;.
    \end{equation*}
    Furthermore, there exists $\theta>0$ such that for all sufficiently small $\kappa$, we have
    \begin{equation*}
        \begin{split}
        &\phantom{111}\|\aA_\eps^{\sT}(\vec{u}_\eps) \fF_\eps - \aA_0^\sT (\vec{u}_0) \fF_0\|_{L_T^{\infty,\eta} \cC_x^{-\kappa}}\\
        &\lesssim \|\Upsilon_\eps\|_{\yY} \cdot \big( \|\vec{u}_\eps - \vec{u}_0\|_{\xX_T} + \|\Lambda_\eps (u_\eps) - \Lambda_0 (u_0)\|_{\cC_\fs^{\alpha-2\kappa}} \big) \\
        &\phantom{11}+ \big( \|\vec{u}_0\|_{\xX_T} + \|\Lambda_0 (u_0)\|_{\cC_\fs^{\alpha-2\kappa}} \big) \cdot \big( \eps^\theta \|\Upsilon_\eps\|_{\yY} + \|\Upsilon_\eps - \Upsilon_0\|_{\yY} \big)\;.
        \end{split}
    \end{equation*}
\end{lem}
\begin{proof}
    We give details for the uniform-in-$\eps$ bound. The error estimates follow in the same way. For the first two terms on the right hand side of \eqref{e:expression_A_flux}, it is straightforward to show that
    \begin{equation*}
        \|v_\eps^{\sT} \Phi_\eps^{\sT} \fF_\eps\|_{L_T^{\infty,\eta} \cC_x^{-\kappa}} \lesssim \|v_\eps\|_{\xX_T^2} \|\Upsilon_\eps\|_{\yY}\;, \quad \|w_\eps^\sT \fF_\eps\|_{L_T^{\infty,\eta} \cC_x^{-\kappa}} \lesssim \|w_\eps\|_{\xX_T^3} \|\Upsilon_\eps\|_{\yY}\;.
    \end{equation*}
    The proof in Lemma~\ref{lem:extra_0} also shows that for every $\sigma>0$, there exists $\theta>0$ such that
    \begin{equation*}
        \big\| \big( \id + \nabla \iI_\eps (\div a_\eps) \big)^\sT \fF_\eps \big\|_{\cC_x^{-\kappa}} \lesssim \big(1 + \eps^\theta \cdot t^{-\frac{\sigma}{2}} \big) \cdot \big( \|\Phi_\eps^{\sT} \fF_\eps\|_{\cC^{-\kappa}} + \|\fF_\eps\|_{\cC^{-\kappa}} \big)\;.
    \end{equation*}
    Choosing $\sigma<\eta$, the uniform bound then follows. The claim for the difference $\aA_\eps^\sT \fF_\eps - \aA_0^\sT \fF_0$ also follows in a similar way. 
\end{proof}

\begin{prop} \label{prop:S}
    Let $\alpha, \eta$ be as in \eqref{eq:range_parameters}. Recall the functional $\sS_\eps$ from \eqref{e:functional_S_eps}. We have
    \begin{equation*}
        \|\sS_\eps (\vec{u}_\eps)\|_{L_T^{\infty,\eta} \cC_x^{-\kappa}} \lesssim (1 + \m^\alpha) \big( 1 + \|\Upsilon_\eps\|_{\yY}^2 \big) \big( 1 + \|\vec{u}_\eps\|_{\xX_T}^2 \big)\;,
    \end{equation*}
    and
    \begin{equation*}
        \begin{split}
        \|\sS_\eps (\vec{u}_\eps) - &\sS_0 (\vec{u}_0)\|_{L_T^{\infty,\eta} \cC_x^{-\kappa}} \lesssim (1 + \m^\alpha) \big(1 + \|\Upsilon_0\|_\yY^2 + \|\Upsilon_\eps\|_\yY^2 \big) \cdot\\
        &\big( 1 + \|\vec{u}_0\|_{\xX_T}^2 + \|\vec{u}_\eps\|_{\xX_T}^2 \big) \, \Big[ \eps^\theta +  \|\vec{u}_\eps - \vec{u}_0\|_{\xX_T} + \|\Upsilon_\eps - \Upsilon_0\|_\yY \Big]\;.
        \end{split}
    \end{equation*}
\end{prop}
\begin{proof}
    This is straightforward from the expression of $\sS_\eps$ in \eqref{e:functional_S_eps} and Lemma~\ref{lem:A_F}, and then using Lemmas~\ref{lem:g_locally_Lip} and~\ref{lem:Lambda} to bound relevant quantities involving $g$ and $\Lambda_\eps$. 
\end{proof}

\begin{prop} \label{prop:T}
Let $\alpha, \eta$ be as in \eqref{eq:range_parameters}. Recall the definition of $\tT_\eps$ and $\widetilde{\tT}_\eps$ from \eqref{e:functional_T_eps} and \eqref{e:functional_T_tilde_eps}. We have the uniform bound
\begin{equation*}
    \| \tT_\eps (\vec{u}_\eps) \|_{L_T^{\infty,2\eta} \cC_{x}^{-\kappa}} \lesssim (1 + \m^{2\alpha}) \big( 1 + \|\Upsilon_\eps\|_\yY^3 \big) \big( 1 + \|\vec{u}_\eps\|_{\xX_T}^3 \big)\;.
\end{equation*}
\end{prop}
\begin{proof}
By definition of $\tT_\eps$ and Lemmas~\ref{lem:Bony} and~\ref{lem:Besov_cutoff}, we have
\begin{equation*}
    \begin{split}
    \|\tT_\eps (\vec{u}_\eps)\|_{L_T^{\infty, 2\eta} \cC_x^{-\kappa}} &\lesssim \|\widetilde{\tT}_\eps (\vec{u}_\eps)\|_{L_T^{\infty, 2\eta} \cC_x^{-1}} \|\P_\m^\perp X_\eps\|_{\cC^{1-2\kappa}}\\
    &\lesssim \|\widetilde{\tT}_\eps (\vec{u}_\eps)\|_{L_T^{\infty, 2\eta} \cC_x^{-1}} \|X_\eps\|_{\cC^{1-\kappa}}\;.
    \end{split}
\end{equation*}
We now control $\|\widetilde{\tT}_\eps (\vec{u}_\eps)\|_{L_T^{\infty, 2\eta} \cC_x^{-1}}$. 
By the expression \eqref{e:functional_T_tilde_eps}, we have
\begin{equation*}
    \begin{split}
    \| \widetilde{\tT}_\eps (\vec{u}_\eps) &\|_{L_T^{\infty, 2\eta} \cC_x^{-1}} \lesssim \|\aA_\eps(\vec{u}_\eps)\|_{L_T^{\infty,\eta} L_x^\infty} + \|g'(u_\eps)\|_{\cC_\fs^\alpha} \|\sS_\eps(\vec{u}_\eps)\|_{L_T^{\infty,\eta} \cC_x^{-\kappa}}\\
    &\phantom{11}+ \|\aA_\eps(\vec{u}_\eps)\|_{L_T^{\infty,\eta} L_x^\infty}^2 + \|g''(u_\eps) \, g(u_\eps)\|_{\cC_\fs^\alpha} \|\aA_\eps^\sT(\vec{u}_\eps) a_\eps^\sT \nabla X_\eps\|_{L_T^{\infty,\eta} \cC_x^{-\kappa}}\;.
    \end{split}
\end{equation*}
Here, the weight $2\eta$ comes from the fact that there is a product between two $\aA_\eps$ on the right hand side, each carrying a weight $\eta$. By \eqref{e:extra_0_uniform} and Lemma~\ref{lem:Lambda}, we have
\begin{equation*}
    \begin{split}
    \|\aA_\eps (\vec{u}_\eps)\|_{L_T^{\infty,\eta} L_x^\infty} &\lesssim \|\vec{u}_\eps\|_{\xX_T} + \|\Lambda_\eps (u_\eps)\|_{\cC_\fs^{\alpha-2\kappa}}\\
    &\lesssim (1 + \m^\alpha) \big( 1 + \|\Upsilon_\eps\|_\yY \big) \big( 1 + \|\vec{u}_\eps\|_{\cC_\fs^\alpha} \big)\;.
    \end{split}
\end{equation*}
By Lemma~\ref{lem:A_F} (replacing $\fF_\eps$ by $a_\eps^{\sT} \nabla X_\eps$) and also Lemma~\ref{lem:Lambda}, we have
\begin{equation*}
    \begin{split}
    \|\aA_\eps^{\sT}(\vec{u}_\eps) a_\eps^{\sT} \nabla X_\eps\|_{L_T^{\infty,\eta} \cC_x^{-\kappa}} &\lesssim \|\Upsilon_\eps\|_\yY \cdot \big( \|\vec{u}_\eps\|_{\xX_T} + \|\Lambda_\eps (u_\eps)\|_{\cC_\fs^{\alpha-2\kappa}} \big)\\
    &\lesssim (1 + \m^\alpha) \big( 1 + \|\Upsilon_\eps\|_\yY^2 \big) \cdot \big( 1 + \|\vec{u}_\eps\|_{\cC_\fs^\alpha} \big).
    \end{split}
\end{equation*}
The desired claim then follows by using Lemma~\ref{lem:g_locally_Lip} to control expressions involving $g$ and Proposition~\ref{prop:S} to bound $\sS_\eps$ and then combining all the above. 
\end{proof}

\begin{prop} \label{prop:extra_1}
    Let $\alpha \in (0,1)$. For every $\beta \in (\alpha,1)$, there exists $\theta>0$ such that
    \begin{equation} \label{e:extra_1_bounded}
        \left\| \iI_\eps \Big( \div \big( a_\eps \Lambda_\eps(u_\eps) \big) \Big) \right\|_{\cC_{\fs}^{\beta}([0,T]\times \T^{2})} \lesssim T^{\theta} (1 + \m^\alpha) \|\Upsilon_\eps\|_\yY \big( 1 + \|u_\eps\|_{\cC_\fs^\alpha} \big)\;.
    \end{equation}
    Furthermore, there exists a (possibly different) $\theta>0$ such that
    \begin{equation} \label{e:extra_1_convergence}
        \begin{split}
        \Big\| \iI_\eps \Big( \div &\big( a_\eps \Lambda_\eps(u_\eps) \big) \Big) - \iI_0 \Big( \div \big( \bar{a} \Lambda_0(u_0) \big) \Big)  \Big\|_{\cC_\fs^{\alpha}([0,T]\times \T^{2})} \lesssim T^\theta (1 + \m^\alpha)\\
        &\cdot \big(1 + \|\Upsilon_\eps\|_\yY \big) \big( 1 + \|u_\eps\|_{\cC_\fs^\alpha} \big) \big( \eps^{\theta} + \|\Upsilon_\eps - \Upsilon_0\|_\yY + \|u_\eps - u_0\|_{\cC_\fs^\alpha} \big)
        \end{split}
    \end{equation}
    uniformly over $\eps \in [0,1]$. 
\end{prop}
\begin{proof}
For simplicity, we write $F_\eps = \Lambda_\eps (u_\eps)$ for $\eps \in \N^{-1}$. By Lemma~\ref{lem:I_eps_div_spacetime_norm}, we have
\begin{equation*}
    \left\| \iI_\eps \big( \div ( a_\eps F_\eps ) \big) \right\|_{\cC_{\fs}^{\beta}([0,T]\times \T^{2})} \lesssim T^\theta \|F_\eps\|_{L_{T,x}^\infty}\;.
\end{equation*}
The uniform bound \eqref{e:extra_1_bounded} then follows from the bound \eqref{e:Lambda_uniform} for $F_\eps = \Lambda_\eps (u_\eps)$. We now turn to the difference \eqref{e:extra_1_convergence}. Using integration by parts and the two-scale expansion of $\nabla_y^\sT Q_\eps$ in \eqref{e:difference_first_order}, we have
\begin{equation} \label{e:extra_1_expansion}
    \begin{split}
    \Big( \iI_\eps \big( \div (a_\eps &F_\eps) \big) \Big)(t,x) = - \int_{0}^{t} \int_{\T^2} (\nabla_y^{\sT} Q_\eps)(t-r,x,y) \, a_\eps(y) \, F_\eps (r,y) \, {\rm d}y {\rm d}r\\
    &= - \int_0^t \int_{\T^2} (\nabla_y^\sT Q_0)(t-r,x,y) \big(\Phi_\eps^*(y)\big)^\sT a_\eps (y) \, F_\eps (r,y) \, {\rm d}y {\rm d}r\\
    &\phantom{111}- \int_0^t \int_{\T^2} (R_\eps^*)^{\sT}(t-r,x,y) a_\eps (y) F_\eps (r,y) \, {\rm d}y {\rm d}r\;,
    \end{split}
\end{equation}
where $(R_\eps^*)^{\sT}$ is the transpose of the kernel $R_\eps^{*}$ given in \eqref{e:difference_first_order}. Integrating by parts again, we can write the difference on the left hand side of \eqref{e:extra_1_convergence} as
\begin{equation*}
    \iI_\eps \big( \div (a_\eps F_\eps) \big) - \iI_0 \big( \div (\bar{a} F_0) \big) = \iI_0 \Big( \div \big( (\Phi_\eps^*)^{\sT} a_\eps F_\eps - \bar{a} F_0 \big) \Big) - (\rR_\eps^*)^{\sT} \big( a_\eps F_\eps \big)\;,
\end{equation*}
where $(\rR_\eps^{*})^{\sT}$ means multiplying the kernel $(R_\eps^{*})^{\sT}$ from the left and then taking spacetime integration. 

For the first term on the right hand side above, by the Schauder estimate for $\iI_0$, we have
\begin{equation*}
    \begin{split}
    &\phantom{111}\left\| \iI_0 \Big( \div \big( (\Phi_\eps^*)^{\sT} a_\eps F_\eps - \bar{a} F_0 \big) \Big) \right\|_{\cC_\fs^\alpha}\\
    &\lesssim T^\theta \Big( \| (\Phi_\eps^*)^{\sT} a_\eps - \, \bar{a} \|_{\cC_x^{-\kappa}} \|F_\eps\|_{L_T^\infty \cC_x^{2\kappa}} + \|F_\eps - F_0\|_{L_T^\infty \cC_{x}^{2\kappa}} \Big)\;.
    \end{split}
\end{equation*}
For the remainder $(\rR_\eps^*)^{\sT} (a_\eps F_\eps)$, by Proposition~\ref{prop:Green_remainder_point}, we can control its $L_{T,x}^{\infty}$-norm by
\begin{equation*}
    \|(\rR_\eps^*)^{\sT} (a_\eps F_\eps)\|_{L_{T,x}^\infty} \lesssim (\eps T)^{\theta} \|F_{\eps}\|_{L_{T,x}^{\infty}}
\end{equation*}
for some $\theta>0$. Also, by the expression \eqref{e:extra_1_expansion} and Lemma~\ref{lem:I_eps_div_spacetime_norm}, we have
\begin{equation*}
    \begin{split}
    \|(\rR_\eps^*)^{\sT} (a_\eps F_\eps)\|_{\cC_\fs^\beta} &\leq \big\|\iI_\eps \big( \div (a_\eps F_\eps) \big) \big\|_{\cC_\fs^\beta} + \big\| \iI_0 \big( \div ((\Phi_\eps^*)^{\sT} a_\eps F_\eps) \big) \big\|_{\cC_\fs^\beta}\\
    &\lesssim T^\theta \|F_\eps\|_{L_{T,x}^\infty}\;.
    \end{split}
\end{equation*}
Interpolating the above two bounds with $\beta>\alpha$, we get
\begin{equation*}
    \big\| (\rR_\eps^*)^{\sT} (a_\eps F_\eps) \big\|_{\cC_\fs^\alpha} \lesssim (\eps T)^\theta \|F_\eps\|_{L_{T,x}^\infty}
\end{equation*}
for some possibly different $\theta > 0$. The proof is complete by plugging the bounds for $F_\eps = \Lambda_\eps (u_\eps)$ and $F_\eps - F_0 = \Lambda_\eps (u_\eps) - \Lambda_0 (u_0)$ from Lemma~\ref{lem:Lambda} and $\|(\Phi_\eps^*)^\sT a_\eps - \bar{a}\|_{\cC^{-\kappa}}$ from Lemma~\ref{lem:oscillation}. 
\end{proof}

We now turn to the second term on the right hand side of the second equation in \eqref{e:fixed_pt_system}. Write $F_\eps = \Lambda_\eps (u_\eps)$ and $H_\eps = (\Phi_\eps^*)^\sT a_\eps$. Then $H_\eps = H(\cdot / \eps)$ with $H = (\Phi^*)^{\sT} a$ and $\Pi_0 H = \bar{a}$. Let
\begin{equation*}
    \begin{split}
    \vV_\eps(t,x) &:= \int_{0}^{t} \int_{\T^2} (\nabla^2 Q_0)(t-r,x,y) H_\eps(y) \big( F_\eps (r,y) - F_\eps (t,x) \big) {\rm d}y {\rm d}r\;,\\
    \vV_0(t,x) &:= \int_{0}^{t} \int_{\T^2} (\nabla^2 Q_0)(t-r,x,y) \, \Pi_0 H \, \big( F_0 (r,y) - F_0 (t,x) \big) {\rm d}y {\rm d}r\;.
    \end{split}
\end{equation*}
It is standard that for every $0 < \beta < \gamma$, there exists $\theta>0$ such that
\begin{equation*}
    \|\vV_0\|_{L_T^\infty \cC_x^\beta} \lesssim T^\theta \|F_0\|_{\cC_\fs^\gamma}\;.
\end{equation*}
But controlling $\|\vV_\eps\|_{L_T^\infty \cC_x^\beta}$-norm uniformly in $\eps$ as well as the difference $\vV_\eps - \vV_0$ is nontrivial since $H_\eps$ has only $L^\infty$-regularity uniformly in $\eps$. We focus on controlling the difference $\vV_\eps - \vV_0$, and the uniform bound for $\vV_\eps$ will follow as a consequence. 

Since $H_\eps = H(\cdot / \eps)$, we have $\Pi_0 H = \Pi_0 H_\eps$ and hence $H_\eps - \Pi_0 H = \Pi_0^\perp H_\eps$. Also, since $\Pi_0 H$ is constant and $\nabla^2 Q$ integrates to $0$ on the torus, we have $\nabla^2 \iI_0 (\Pi_0 H) = 0$. Hence, we have
\begin{equation*}
    \begin{split}
    \vV_\eps &= \nabla^2 \iI_0 \big( H_\eps F_\eps \big) - \nabla^2 \iI_0 (H_\eps) F_\eps = \nabla^2 \iI_0 \big( H_\eps F_\eps \big) - \nabla^2 \iI_0 \big( \Pi_0^\perp H_\eps \big) F_\eps\\
    \vV_0 &= \nabla^2 \iI_0 \big( \Pi_0 H \cdot F_0 \big) - \nabla^2 \iI_0 \big( \Pi_0 H \big) F_0 = \nabla^2 \iI_0 \big( \Pi_0 H \cdot F_0 \big)\;.
    \end{split}
\end{equation*}
Thus, the difference $\vV_\eps - \vV_0$ can be decomposed into
\begin{equation} \label{e:difference_V_expression}
    \vV_\eps - \vV_0 = \Big( \nabla^2 \iI_0 \big( \Pi_0^\perp H_\eps \cdot F_\eps \big) - \nabla^2 \iI_0 \big( \Pi_0^\perp H_\eps \big) F_\eps \Big) + \nabla^2 \iI_0 \big( \Pi_0 H \, (F_\eps - F_0) \big)\;.
\end{equation}
We have the following bound regarding the first component in $\vV_\eps - \vV_0$.

\begin{lem} \label{lem:second_main_ingredient}
Let $H: \T^2 \rightarrow \R^{2 \times 2}$ and write $H_\eps := H(\cdot/\eps)$. Then for every $0 < \beta < \gamma < 1$, there exists $\theta>0$ such that
\begin{equation*}
    \begin{split}
   \left\| \nabla^2 \iI_0 \big( \Pi_0^\perp H_\eps \cdot F_\eps \big) - \nabla^2 \iI_0 \big( \Pi_0^\perp H_\eps \big) F_\eps \right\|_{L_T^\infty \cC_x^\beta}\lesssim (\eps T)^\theta \big( 1+\|H\|_{L^\infty} \big) \|F_\eps\|_{\cC_\fs^\gamma}
    \end{split}
\end{equation*}
uniformly over $\eps \in \N^{-1}$, $H \in L^\infty(\T^2; \R^{2 \times 2})$ and $F \in \cC_\fs^\gamma \big([0,T] \times \T^2; \R^2\big)$. 
\end{lem}
\begin{proof}
We can assume without loss of generality that $\Pi_0 H = 0$. We have
\begin{equation*}
    \big( \nabla^2 \iI_0 \big( \Pi_0^\perp H_\eps \cdot F_\eps \big) - \nabla^2 \iI_0 \big( \Pi_0^\perp H_\eps \big) F_\eps \big)(t,x) = \int_{0}^{t} \jJ_\eps (t,r,x) {\rm d}r\;,
\end{equation*}
where
\begin{equation*}
    \jJ_\eps (t,r,x) = \int_{\T^2} (\nabla^2 Q_0)(t-r,x,y) H_\eps (y) \big(  F_\eps(r,y) - F_\eps(t,x) \big) {\rm d}y\;.
\end{equation*}
First, we have the straightforward pointwise bound
\begin{equation} \label{e:rho_pointwise_crude}
    \begin{split}
    |\jJ_\eps(t,r,x)| &\lesssim \Big( \int_{\T^2} \big( \sqrt{t-r} + |x-y| \big)^{-4+\gamma} {\rm d}y \Big) \|H\|_{L^\infty} \|F_\eps\|_{\cC_\fs^\gamma}\\
    &\lesssim (t-r)^{-1+\frac{\gamma}{2}} \|H\|_{L^\infty} \|F_\eps\|_{\cC_\fs^\gamma}\;.
    \end{split}
\end{equation}
Second, by Lemma~\ref{lem:oscillation}, we also have the pointwise bound
\begin{equation} \label{e:rho_pointwise_oscillation}
    \begin{split}
    |\jJ_\eps(t,r,x)| &\lesssim \eps^\theta \big( 1 + \|H\|_{L^\infty} \big) \left\| \nabla^2 Q_0(t-r,x,\cdot) \big( F_\eps(r,\cdot) - F_\eps(t,x) \big) \right\|_{\cC_y^\gamma}\\
    &\lesssim \eps^\theta (t-r)^{-2-\frac{\gamma}{2}} \big( 1 + \|H\|_{L^\infty} \big) \|F_\eps\|_{\cC_\fs^\gamma}\;,
    \end{split}
\end{equation}
uniformly over all $\eps \in \N^{-1}$, $x \in \T^2$ and $0<r<t<T$. Finally, we have
\begin{equation} \label{e:rho_Holder}
    \begin{split}
    \|\jJ_\eps(t,r,\cdot)\|_{\cC_x^\gamma} &\lesssim \Big(\sup_{y \in \T^2} \|(\nabla^2 Q_0)(t-r,\cdot,y)\|_{\cC_x^\gamma} \Big) \, \|H\|_{L^\infty} \|F_\eps\|_{\cC_\fs^\gamma}\\
    &\lesssim (t-r)^{-2-\frac{\gamma}{2}} \|H\|_{L^\infty} \|F_\eps\|_{\cC_\fs^\gamma}\;.
    \end{split}
\end{equation}
Combining \eqref{e:rho_pointwise_crude} and \eqref{e:rho_pointwise_oscillation} and interpolating with \eqref{e:rho_Holder}, we get
\begin{equation*}
    \|\jJ_{\eps}(t,r,\cdot)\|_{\cC_x^\beta} \lesssim \eps^\theta (t-r)^{-1+\theta} \big(1 + \|H\|_{L^\infty}\big) \|F_\eps\|_{\cC_x^\gamma}
\end{equation*}
for a possibly different $\theta$ (depending on $\beta$ and $\gamma$). The claim then follows by integrating out $r \in [0,t]$. 
\end{proof}

We have the following proposition.

\begin{prop} \label{prop:second_main}
    There exists $\theta > 0$ such that for sufficiently small $\kappa>0$, we have
    \begin{equation*}
        \begin{split}
        \big\|  \vV_\eps - \vV_0 \big\|_{L_T^\infty \cC_{x}^{\alpha - 8\kappa}} \lesssim &\phantom{1}T^\theta (1 + \m^\alpha) \big( 1 + \|X_\eps\|_{\cC^{1-\kappa}} \big) \big( 1 + \|u_\eps\|_{\cC_\fs^\alpha} \big)\\
        &\cdot \Big( \eps^\theta + \|u_\eps - u_0\|_{\cC_\fs^\alpha} + \|X_\eps - X_0\|_{\cC^{1-\kappa}} \Big)\;.
        \end{split}
    \end{equation*}
\end{prop}
\begin{proof}
    By the Schauder estimate, there exists $\theta>0$ such that
    \begin{equation*}
        \|\nabla^2 \iI_0 \big( \bar{a} (F_\eps - F_0) \big)\|_{L_T^\infty \cC_x^{\alpha-8\kappa}} \lesssim T^\theta \|F_\eps - F_0\|_{\cC_\fs^{\alpha-2\kappa}}\;.
    \end{equation*}
    Combining this bound with the expression \eqref{e:difference_V_expression} as well as Lemma~\ref{lem:second_main_ingredient} with $\beta = \alpha-8\kappa$ and $\gamma = \alpha-2\kappa$, and then applying Lemma~\ref{lem:Lambda} to control $F_\eps$ and $F_\eps - F_0$, the desired claim follows. 
\end{proof}

\begin{prop}\label{prop: widetilde R}
    There exist $\beta, \theta>0$ such that
     \begin{equation*}
        \begin{split}
        &\phantom{111}\bigg\| \int_{0}^{t} \int_{\T^2} \widetilde{R}_\eps (t-r,x,y) a_\eps(y) \big( (\Lambda_\eps u_\eps)(r,y) - (\Lambda_\eps u_\eps)(t,x) \big) \mathrm{d}y \mathrm{d} r\bigg\|_{L_{T}^{\infty} \cC_{x}^{\beta}}\\
        &\lesssim (\eps T)^{\theta} (1 + \m^\alpha) \|X_\eps\|_{\cC^{1-\kappa}} \big( 1 + \|u_\eps\|_{\cC_\fs^\alpha} \big)
        \end{split}
    \end{equation*}
    uniformly over $\eps \in \N^{-1}$. 
\end{prop}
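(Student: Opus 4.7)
The strategy mirrors that of Lemmas~\ref{le:second_main_Linfty} and~\ref{le:second_main_Holder}, but the small factor $(\eps T)^\theta$ will now come from the pointwise smallness of the remainder kernel $\widetilde R_\eps$ itself rather than from the oscillation-cancellation of a zero-mean periodic coefficient. The main input I will need is a pointwise bound of the form
\begin{equation*}
    |\widetilde R_\eps(t,x,y)| \,\lesssim\, \frac{\eps^{\sigma_0}}{(\sqrt{t}+|x-y|)^{4+\sigma_0}} \qquad \text{for }|x-y|\gtrsim \eps,
\end{equation*}
for some $\sigma_0>0$, together with the ``trivial'' estimate $|\widetilde R_\eps(t,x,y)|\lesssim (\sqrt{t}+|x-y|)^{-4}$ in the near-diagonal regime. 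This is the Hessian analogue of Propositions~\ref{prop:Green_remainder_point} and~\ref{prop:Green_remainder_derivative} and should be obtainable from the Green's function asymptotics of \cite{GengShen2020,GengShen2015,Geng2020}. Writing $\beta=\alpha-2\kappa$, I will also use the H\"older estimate $|\Lambda_\eps(r,y)-\Lambda_\eps(t,x)|\lesssim(\sqrt{|t-r|}+|x-y|)^{\beta}\|\Lambda_\eps\|_{\cC_\fs^{\beta}}$ throughout, which absorbs part of the singularity of the kernel.

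For the $L_{t,x}^{\infty}$-bound on the integral in question, I split the $y$-integration into the near-diagonal region $|y-x|\le 10\eps$ and the far region $|y-x|>10\eps$. On the near-diagonal piece I combine the trivial bound on $\widetilde R_\eps$ with the H\"older estimate on $\Lambda_\eps$, reducing the singularity to $(\sqrt{t-r}+|x-y|)^{-(4-\beta)}$; since the $y$-domain has area $\lesssim \eps^2$ this yields a gain $\eps^{\beta}T^{\theta}$. On the far region I apply the $\eps^{\sigma_0}$-gain estimate together with the same H\"older bound; choosing $\sigma_0<\beta$, the integrand is controlled by $\eps^{\sigma_0}(\sqrt{t-r}+|x-y|)^{-(4+\sigma_0-\beta)}$, which integrates to $\lesssim \eps^{\sigma_0}T^{(\beta-\sigma_0)/2}\|\Lambda_\eps\|_{\cC_\fs^\beta}$. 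Together these produce the desired $(\eps T)^\theta\|\Lambda_\eps\|_{\cC_\fs^{\alpha-2\kappa}}$ bound on the $L_{t,x}^\infty$-norm.

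To upgrade from $L_{t,x}^\infty$ to $L_t^\infty\cC_x^\gamma$ I plan to interpolate rather than redo the full three-region decomposition of Lemma~\ref{le:second_main_Holder}. The point is that, \emph{without} using the $\eps^{\sigma_0}$-gain on $\widetilde R_\eps$ and instead working with the full mixed Hessian of $Q_\eps$ together with the para-controlled structure of $\widetilde R_\eps$, one obtains a bound on the same integral in $L_t^\infty\cC_x^{\gamma'}$ for some $\gamma'>0$ that is merely \emph{uniform} in $\eps$, with the H\"older continuity of $\Lambda_\eps$ supplying the requisite extra regularity. Interpolating this uniform $\cC_x^{\gamma'}$-bound against the $(\eps T)^\theta$-bound in $L_{t,x}^\infty$ from the previous paragraph yields an $(\eps T)^{\theta'}$-bound in $L_t^\infty\cC_x^\gamma$ for any $\gamma\in(0,\gamma')$ and a possibly smaller $\theta'>0$. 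The main technical obstacle is the establishment of the pointwise bound on $\widetilde R_\eps$ above; once this is in hand, the rest of the argument is essentially the same arithmetic as in the proofs of Lemmas~\ref{le:second_main_Linfty} and~\ref{le:second_main_Holder}.
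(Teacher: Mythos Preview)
Your $L^{\infty}_{t,x}$ argument is sound but more elaborate than necessary: the pointwise bound $|\widetilde R_\eps(t,x,y)|\lesssim \eps^{\sigma_1}(\sqrt t+|x-y|)^{-(4+\sigma_2)}$ in fact holds \emph{for all} $x,y$, not only for $|x-y|\gtrsim\eps$; this is exactly Proposition~\ref{prop:Green_remainder_point}, obtained by interpolating Proposition~\ref{prop:Green_Gaussian} against the trivial $(\sqrt t+|x-y|)^{-4}$ estimate. With this global bound in hand the near/far split is superfluous and the $L^\infty$ estimate is a one-line computation, which is what the paper does.

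The genuine gap is in your passage to $\cC_x^\gamma$. You claim a bound on the integral in $L_t^\infty\cC_x^{\gamma'}$ that is \emph{uniform} in $\eps$, obtained ``working with the full mixed Hessian of $Q_\eps$ together with the para-controlled structure of $\widetilde R_\eps$''. But any H\"older estimate in $x$ must eventually control $x$-increments of the kernel, and neither piece of $\widetilde R_\eps$ is uniformly H\"older in $x$: the coefficient $a_\eps$ has H\"older seminorm $\sim\eps^{-\mu}$, so $\nabla_x(\nabla^2_{x,y}Q_\eps)$ blows up by Schauder, and $\Phi_\eps(x)=\id+\nabla\chi(x/\eps)$ carries the same $\eps^{-\mu}$ loss. (Indeed, the whole thrust of the paper's ansatz in Section~\ref{sec:overview} is that $\nabla\iI_\eps(\div(a_\eps\Lambda_\eps))$ is \emph{not} uniformly in $\cC^{0+}$.) The paper's remedy is precisely to accept a polynomial blow-up: from Proposition~\ref{prop:Green_remainder_derivative} one has $|\nabla_x\widetilde R_\eps|\lesssim \eps^{-\beta_1}(\sqrt t+|x-y|)^{-(5+\beta_2)}$, and interpolating this \emph{bad-in-$\eps$} $\cC^1$-type bound against the \emph{good-in-$\eps$} pointwise bound of Proposition~\ref{prop:Green_remainder_point} yields, for $\gamma$ small enough, a net positive power of $\eps$ together with an integrable time singularity. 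So your interpolation strategy is right, but the second endpoint must be $\eps^{-\beta_1}$-large, not uniform; once you allow that, your argument coincides with the paper's.
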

\begin{proof}
We again write $F_\eps = \Lambda_\eps u_\eps$. We first consider the integration in $y \in \T^2$. By the bound for $\widetilde{R}_\eps$ in Proposition~\ref{prop:Green_remainder_point}, there exist $\theta' > 0$ and $\sigma' < 1$ such that
\begin{equation*}
    \Big| \int_{\T^2} \widetilde{R}_\eps(t-r,x,y) a_\eps (y) \big( F_\eps(r,y) - F_\eps(t,x) \big) {\rm d}y\Big| \lesssim \eps^{\theta'} (t-r)^{-\sigma'} \|F_\eps\|_{\cC_\fs^{\alpha-2\kappa}}
\end{equation*}
uniformly over $r \in [0,t]$ and $x \in \T^2$. On the other hand, by Proposition~\ref{prop:Green_remainder_derivative}, for $\gamma < \alpha$, there exist $\lambda_1, \lambda_2 > 0$ such that
\begin{equation*}
    \Big\| \int_{\T^2} \widetilde{R}_\eps(t-r,x,y) a_\eps (y) \big( F_\eps(r,y) - F_\eps(t,x) \big) {\rm d}y\Big\|_{\cC_x^\gamma} \lesssim \eps^{-\lambda_1} (t-r)^{-\lambda_2} \|F_\eps\|_{\cC_\fs^{\alpha-2\kappa}}\;.
\end{equation*}
Since $\sigma' < 1$, if $\beta>0$ is sufficiently small, we can interpolate the above two bounds to get
\begin{equation*}
    \Big\| \int_{\T^2} \widetilde{R}_\eps(t-r,x,y) a_\eps (y) \big( F_\eps(r,y) - F_\eps(t,x) \big) {\rm d}y\Big\|_{\cC_x^\beta} \lesssim \eps^{\theta} (t-r)^{-\sigma} \|F_\eps\|_{\cC_\fs^{\alpha-2\kappa}}\;,
\end{equation*}
for some $\theta>0$ and $\sigma<1$. Integrating $r \in [0,t]$ and applying Lemma~\ref{lem:Lambda} to control $\|F_\eps\|_{\cC_\fs^{\alpha-2\kappa}}$, we can thus conclude the proof of the proposition. 
\end{proof}

\subsection{Proof of Theorem~\ref{thm:fixed_pt_thm}}

We are now ready to prove Theorem~\ref{thm:fixed_pt_thm}. For $\eps \in \N^{-1}$, let
\begin{equation*}
    \mM_\eps := 1 + \|\Upsilon_\eps\|_{\yY} + \|u_\eps(0)\|_{\cC^\alpha}\;.
\end{equation*}
Recall the definition of $u_\eps^\#$ from \eqref{e:u_sharp}. We have
\begin{equation*}
    \|u_\eps^{\#}(0)\|_{\cC^\alpha} \lesssim \|u_\eps(0)\|_{\cC^\alpha} + \|g\|_{L^\infty} \|X_\eps\|_{\cC^\alpha} \lesssim \mM_\eps\;.
\end{equation*}
We first show that for each $\eps \in \N^{-1}$, there is a unique $\vec{u}_\eps = (u_\eps, v_\eps, w_\eps) \in \xX_{T_\eps}$ satisfying the system \eqref{e:fixed_pt_system} up to a maximal existence time $T_\eps$ (and a unique $\vec{u}_0 = (u_0, v_0)$ satisfying \eqref{e:fixed_pt_system_limit} for $\eps=0$). Furthermore, the maximal existence time $T_\eps$ depends on $\eps$ via $\mM_\eps$ only. 

Define the map $\Gamma_{\eps} := (\Gamma_{\eps}^{(1)}, \Gamma_{\eps}^{(2)}, \Gamma_{\eps}^{(3)})$ on triples of spacetime functions $\vec{u} = (u,v,w)$ by
\begin{equation*}
    \begin{split}
    \Gamma_\eps^{(1)} \vec{u} &= e^{t \lL_\eps} u_\eps^{\#}(0) + g(u) \prec \P_{\m}^\perp X_\eps + \iI_\eps \Big( \div \big( a_\eps \Lambda_\eps(u) \big) + \sS_\eps(\vec{u}) - \tT_\eps(\vec{u}) \Big)\;,\\
    \Gamma_\eps^{(2)} \vec{u} &= \nabla e^{t \lL_0} u_\eps^{\#}(0) + \nabla \iI_0 \big( \sS_\eps(\vec{u}) - \tT_\eps(\vec{u}) \big) - \int_{0}^{t} \int_{\T^2} (\nabla_{x,y}^2 Q_0)(t-r,x,y) \cdot\\
    &\phantom{1}\big(\Phi_\eps^*(y)\big)^{\sT} a_\eps(y) \Big( (\Lambda_\eps u)(r,y) - (\Lambda_\eps u)(t,x) \Big) {\rm d}y {\rm d}r\;,\\
    \Gamma_\eps^{(3)} \vec{u} &= \big(\nabla e^{t \lL_\eps} - \Phi_\eps \nabla e^{t \lL_0}\big) u_\eps^{\#}(0) + \rR_\eps \big( \sS_\eps(\vec{u}) - \tT_\eps(\vec{u}) \big)\\
    &\phantom{1}- \int_{0}^{t} \int_{\T^2} \widetilde{\rR}_\eps(t-r,x,y) a_\eps(y) \Big( (\Lambda_\eps u)(r,y) - (\Lambda_\eps u)(t,x) \Big) {\rm d}y {\rm d}r\;,
    \end{split}
\end{equation*}
where $\Gamma \vec{u}$ is evaluated at the spacetime point $(t,x)$. We now show that there exists $T_\eps>0$ such that $\Gamma_{\eps}$ is a contraction from a bounded ball of radius $\kK$ in $\xX_{T_\eps}$ into itself. Again, we fix a large $T^*>0$ and consider time intervals $T_\eps < T^*$. All the proportionality constants below are allowed to depend on $T^*$, but are uniform in $T \in [0,T^*]$. 

By Lemmas~\ref{lem:initial},~\ref{lem:para_term} and~\ref{lem:I_eps_spacetime_norm} and Propositions~\ref{prop:S},~\ref{prop:T} and~\ref{prop:extra_1}, we see there exist $C_1, \theta_1>0$ such that
\begin{equation} \label{e:Gamma_1}
    \|\Gamma_{\eps}^{(1)} \vec{u}\|_{\cC_\fs^\alpha} \leq C_1 \Big( \mM_\eps + \m^{-(1-\alpha-\kappa)} \mM_\eps (1+\kK) + T^{\theta_1} (1 + \m^{2\alpha}) \big( \mM_\eps^6 + \kK^6 \big) \Big)\;,
\end{equation}
where we take $\cC_\fs^\alpha([0,T] \times \T^2)$-norm for $\Gamma_\eps^{(1)} \vec{u}$ on the left hand side. We now turn to $\Gamma_\eps^{(2)}$. For the initial data term, since $\eta > 1-\frac{3\alpha}{4}$, we have
\begin{equation*}
    \|\nabla e^{t \lL_0} u_\eps^{\#}\|_{L_T^{\infty,\eta} \cC_x^{\alpha/4}} \lesssim \sup_{t \in [0,T]} \Big( t^{\frac{\eta}{2}} \cdot t^{-\frac{1}{2}(1-\frac{3\alpha}{4})} \|u_\eps^{\#}(0)\|_{\cC^\alpha} \Big) \lesssim T^\theta \mM_\eps\;.
\end{equation*}
Using Propositions~\ref{prop:S} and~\ref{prop:T} for the term $\nabla \iI_0 (\sS_\eps - \tT_\eps)$ and Proposition~\ref{prop:second_main} for the last term on the right hand side of the equation for $\Gamma_\eps^{(2)} \vec{u}$, we have
\begin{equation} \label{e:Gamma_2}
    \|\Gamma_\eps^{(2)} \vec{u}\|_{L_T^{\infty,\eta} \cC_x^{1-\alpha}} \leq C_2 T^{\theta_2} (1 +\m^{2\alpha}) \big( \mM_\eps + \kK \big)^6
\end{equation}
for some $C_2, \theta_2 > 0$. Finally for $\Gamma_\eps^{(3)}$, this time using Lemmas~\ref{lem:initial_error},~\ref{lem:remainder_convolution_negative} and Proposition~\ref{prop: widetilde R} together with Propositions~\ref{prop:S} and~\ref{prop:T}, we get
\begin{equation} \label{e:Gamma_3}
    \|\Gamma_\eps^{(3)} \vec{u}\|_{L_T^{\infty,\eta} \cC_x^{8\kappa}} \leq C_3 (\eps T)^{\theta_3} (1 + \m^{2\alpha}) (\mM_\eps +\kK)^6
\end{equation}
for some $C_3, \theta_3 > 0$. Combining \eqref{e:Gamma_1}, \eqref{e:Gamma_2} and~\eqref{e:Gamma_3}, we deduce that there exist $C_0, \theta > 0$ such that
\begin{equation*}
    \|\Gamma_\eps \vec{u}\|_{\xX_T} \leq C_0 \Big( \mM_\eps + \m^{-(1-\alpha-\kappa)} \mM_\eps (1 + \kK) + T^\theta (1 + \m^{2\alpha}) (\mM_\eps + \kK)^6 \Big)\;.
\end{equation*}
Hence, if we take $\kK > 10 C_0 \mM_\eps$, and $\m$ sufficiently large so that
\begin{equation*}
    C_0 \cdot \m^{-(1-\alpha-\kappa)} \mM_\eps (1 +\kK) \leq \frac{\kK}{10}\;,
\end{equation*}
and $T>0$ sufficiently small depending on $\mM_\eps$ and the previously chosen $\kK$ and $\m$ (and hence depending on $\mM_\eps$ only) so that
\begin{equation*}
    T^\theta (1 + \m^{2\alpha}) (\mM_\eps + \kK)^6 \leq \frac{\kK}{10}\;,
\end{equation*}
then $\Gamma_\eps$ maps $\kK \subset \xX_T$ into itself. That $\Gamma_{\eps} $ is a contraction from the above space into itself follows from exactly the same arguments and the local Lipschitzness of $g$ and $\Lambda_{\eps} $. Hence, for each $\eps \in \N^{-1}$, there exists a unique triple $\vec{u}_\eps = (u_{\eps} , v_{\eps} , w_{\eps} ) \in \xX_{T_{\eps} }$ satisfying the system \eqref{e:fixed_pt_system} (and \eqref{e:fixed_pt_system_limit} for $\eps=0$). It also transpires from the arguments that $T_{\eps} $ depends on $\mM_{\eps}$ only. In particular, the local existence time $T_\eps$ is independent of $\eps$ if $\mM_{\eps}$ is uniformly bounded in $\eps$. 

We now turn to the second part of the theorem, namely the convergence of $\vec{u}_\eps$ up to time $T_0$ for which the limiting solution $\vec{u}_0$ is defined. It essentially follows from the same arguments as above, with an additional ingredient on dealing with terms involving the difference $\tT_\eps (\vec{u}_\eps) - \tT_0 (\vec{u}_0)$. The assumption that the limiting solution $u_0 \in L_T^\infty \cC_x^{\alpha'}$ for some $\alpha'>\alpha$ is just to ensure that convergence takes place in the same space as the existence (without weakening the regularity or adding weights), thus simplifying notations and technical details. Let
\begin{equation*}
    \mM:= 1 + \|\vec{u}_0\|_{\xX_{T_0}} + \|u_0\|_{L_{T_0}^{\infty} \cC_x^{\alpha'}} + \|\Upsilon_0\|_{\yY}.
\end{equation*}
We need to control the difference between $\vec{u}_\eps = (u_\eps, v_\eps, w_\eps)$ and $\vec{u}_0 = (u_0, v_0, 0)$. By assumption, we have
\begin{equation*}
    \|u_\eps(0)\|_{\cC^\alpha} + \|\Upsilon_\eps\|_\yY \leq \mM
\end{equation*}
for all sufficiently small $\eps>0$. Hence, there exists $\tau'>0$ independent of $\eps$ such that $\vec{u}_\eps \in \xX_{\tau'}$ for all $\eps$ and there exists $C = C(\mM)$ also independent of $\eps$ such that
\begin{equation*}
    \|\vec{u}_\eps\|_{\xX_{\tau'}} \leq C(\mM)\;.
\end{equation*}
We now give details to control the $u_\eps - u_0$ component of $\vec{u}_\eps - \vec{u}_0$. Since $\vec{u}_\eps$ and $\vec{u}_0$ are solutions to the system, we have
\begin{equation*}
    \begin{split}
    u_\eps - u_0 = &\phantom{1} \big(e^{\tau \lL_\eps} u_\eps^{\#}(0) - e^{\tau \lL_0} u_0^{\#}(0) \big) + \Big( g(u_\eps) \prec \P_\m^\perp X_\eps - g(u_0) \prec \P_\m^\perp X_0 \Big)\\
    &+ \iI_\eps \Big( \div \big( a_\eps \Lambda_\eps (u_\eps) \big) + \sS_\eps (\vec{u}_\eps) - \tT_\eps (\vec{u}_\eps) \Big)\\
    &- \iI_0 \Big( \div \big( \bar{a} \Lambda_0 (u_0) \big) + \sS_0(\vec{u}_0) - \tT_0(\vec{u}_0) \Big)\;,
    \end{split}
\end{equation*}
where the time is $\tau \in (0, \tau')$ to be specified later. For the initial data term, by Lemmas~\ref{lem:initial} and~\ref{lem:initial_convergence} and that $\alpha' > \alpha$, we have
\begin{equation} \label{e:converge_u_initial}
    \begin{split}
    &\phantom{111}\left\| e^{\tau \lL_\eps} u_\eps^{\#}(0) - e^{\tau \lL_0} u_0^{\#}(0) \right\|_{\cC^\alpha}\\
    &\leq \big\| e^{\tau \lL_\eps} \big( u_\eps^{\#}(0) - u_0^{\#}(0) \big) \big\|_{\cC^\alpha} + \big\| \big( e^{\tau \lL_\eps} - e^{\tau \lL_0} \big) u_0^{\#}(0) \big\|_{\cC^\alpha}\\
    &\lesssim \|u_\eps^{\#}(0) - u_0^{\#}(0)\|_{\cC^\alpha} + \eps^\theta \|u_0^{\#}(0)\|_{\cC^{\alpha'}}\;.
    \end{split}
\end{equation}
By Lemma~\ref{lem:para_term}, we have
\begin{equation} \label{e:converge_u_para}
    \begin{split}
    \big\|\big( g(u_\eps) - g(u_0) \big) \prec \P_\m^\perp X_\eps \big\|_{\cC_\fs^\alpha} &\lesssim_{\mM} \, \m^{-(1-\alpha-\kappa)} \, \|u_\eps - u_0\|_{\cC_\fs^\alpha}\;,\\
    \big\| g(u_0) \prec \P_\m^\perp \big( X_\eps - X_0 \big) \big\|_{\cC_\fs^\alpha} &\lesssim_\mM \, \m^{-(1-\alpha-\kappa)} \, \|\Upsilon_\eps - \Upsilon_0\|_\yY\;,
    \end{split}
\end{equation}
where we have omitted the explicit dependence on $\|u_\eps\|_{\cC_\fs^\alpha}$ and $\|u_0\|_{\cC_\fs^\alpha}$ but moved them into the dependence of the proportionality constant on $\mM$. By Proposition~\ref{prop:extra_1}, we have
\begin{equation} \label{e:converge_u_extra_1}
    \begin{split}
    &\phantom{111}\left\| \iI_\eps \Big( \div \big( a_\eps \Lambda_\eps (u_\eps) \big) \Big) - \iI_0 \Big( \div \big( \bar{a} \Lambda_0 (u_0) \big) \Big) \right\|_{\cC_\fs^\alpha}\\
    &\lesssim_\mM \, \tau^\theta (1 + \m^\alpha) \, \big( \eps^\theta + \|\Upsilon_\eps - \Upsilon_0\|_\yY + \|u_\eps - u_0\|_{\cC_\fs^\alpha} \big)\;.
    \end{split}
\end{equation}
By Proposition~\ref{prop:S}, we have
\begin{equation} \label{e:converge_u_S}
    \begin{split}
    &\phantom{111}\|\iI_\eps \big( \sS_\eps (\vec{u}_\eps) \big) - \iI_0 \big( \sS_0 (\vec{u}_0) \big)\|_{\cC_\fs^\alpha}\\
    &\lesssim_\mM \, \tau^\theta (1 + \m^\alpha) \big(\eps^\theta + \|\Upsilon_\eps - \Upsilon_0\|_\yY + \|\vec{u}_\eps - \vec{u}_0\|_{\xX_\tau} \big)\;.
    \end{split}
\end{equation}
We finally turn to $\iI_\eps (\tT_\eps) - \iI_0 (\tT_0)$. By Proposition~\ref{prop:T}, we have
\begin{equation} \label{e:converge_u_T_1}
    \big\| (\iI_\eps - \iI_0) \big( \tT_\eps (\vec{u}_\eps) \big) \big\|_{\cC_\fs^\alpha} \lesssim_\mM \, (\eps \tau)^\theta (1 + \m^{2\alpha})\;.
\end{equation}
As for $\iI_0 (\tT_\eps - \tT_0)$, we first note that since $\vec{u}_\eps$ and $\vec{u}_0$ are solutions to the fixed point system \eqref{e:fixed_pt_system}, we have
\begin{equation*}
    \tT (\vec{u}_\eps) = \d_t \big( g(u_\eps) \prec \P_\m^\perp X_\eps \big)\;, \qquad \tT (\vec{u}_0) = \d_t \big( g(u_0) \prec \P_\m^\perp X_0 \big)\;.
\end{equation*}
Hence, by Lemma~\ref{lem:I0_time_derivative} and interpolation, we have
\begin{equation} \label{e:converge_u_T_2}
    \big\| \iI_0 \big( \tT_\eps (\vec{u}_\eps) - \tT_\eps (\vec{u}_0) \big) \big\|_{L_{\tau}^\infty \cC_x^{\alpha}} \lesssim \tau^\theta \big\| g(u_\eps) \prec \P_\m^\perp X_\eps - g(u_0) \prec \P_\m^\perp X_0 \big\|_{\cC_\fs^\alpha}\;.
\end{equation}
Combining \eqref{e:converge_u_T_1} and \eqref{e:converge_u_T_2} and interpolating with the uniform bound, we get
\begin{equation} \label{e:converge_u_T}
    \begin{split}
    &\phantom{111}\big\| \iI_\eps \big( \tT_\eps (\vec{u}_\eps) \big) - \iI_0 \big( \tT_0 (\vec{u}_0) \big) \big\|_{\cC_\fs^\alpha}\\
    &\lesssim_\mM \, \tau^\theta (1 + \m^{2\alpha}) \cdot \big( \eps^\theta + \|\Upsilon_\eps - \Upsilon_0\|_\yY + \|\vec{u}_\eps - \vec{u}_0\|_{\xX_\tau} \big)\;.
    \end{split}
\end{equation}
Combining \eqref{e:converge_u_initial}, \eqref{e:converge_u_para}, \eqref{e:converge_u_extra_1}, \eqref{e:converge_u_S} and \eqref{e:converge_u_T}, and using that
\begin{equation*}
    \|u_\eps^{\#}(0) - u_0^{\#}(0)\|_{\cC^\alpha} \lesssim \mM \cdot \|u_\eps(0) - u_0(0)\|_{\cC^\alpha} + \|\Upsilon_\eps - \Upsilon_0\|_{\yY}\;,
\end{equation*}
we get
\begin{equation} \label{e:converge_u_bootstrap}
    \begin{split}
    \|u_\eps - &u_0\|_{\cC_\fs^\alpha([0,\tau] \times \T^2)} \lesssim_\mM  \big(\eps^\theta + \|u_\eps(0) - u_0(0)\|_{\cC^\alpha} + \|\Upsilon_\eps - \Upsilon_0\|_\yY \big)\\
    &+ \big( \tau^\theta (1 + \m^{2\alpha}) + \m^{-(1-\alpha-2\kappa)} \big) \cdot \big(\eps^\theta + \|\vec{u}_\eps - \vec{u}_0\|_{\xX_\tau} \big)\;.
    \end{split}
\end{equation}
One can get the same bounds for $\|v_\eps - v_0\|_{\xX_\tau^{(2)}}$ and $\|w_\eps\|_{\xX_\tau^{(3)}}$. Hence, we deduce there exist $C_0 = C_0(\mM)$ and $\theta>0$ such that
\begin{equation} \label{e:converge_vecu_bootstrap}
    \begin{split}
    \|\vec{u}_\eps - &\vec{u}_0\|_{\xX_\tau} \leq  C_0 \Big[ \big(\eps^\theta + \|u_\eps(0) - u_0(0)\|_{\cC^\alpha} + \|\Upsilon_\eps - \Upsilon_0\|_\yY \big)\\
    &+ \big( \tau^\theta (1 + \m^{2\alpha}) + \m^{-(1-\alpha-2\kappa)} \big) \cdot \big(\eps^\theta + \|\vec{u}_\eps - \vec{u}_0\|_{\xX_\tau} \big) \Big]\;.
    \end{split}
\end{equation}
One can choose $\m$ large enough depending on $\mM$ and then $\tau < \tau'$ sufficiently small (depending on $\mM$ and $\m$, and hence on $\mM$ only) so that one can absorb the $\|\vec{u}_\eps - \vec{u}_0\|_{\xX_\tau}$ term on the right hand side into the left, and obtain
\begin{equation*}
    \|\vec{u}_\eps - \vec{u}_0\|_{\xX_\tau} \leq C \Big[ \eps^\theta + \|u_\eps(0) - u_0(0)\|_{\cC^\alpha} + \|\Upsilon_\eps - \Upsilon_0\|_\yY \Big]
\end{equation*}
for some $C>0$ depending on $\mM$ (but independent of $\eps$) and $\theta>0$. Since such a time $\tau$ depends on $\mM$ only and that for sufficiently small $\eps$, one can keep the norm $\|\vec{u}_\eps\|_{\xX_\tau}$ not exceeding $\mM$. One can then iterate this procedure finitely many times until one reaches $T_0$. This completes the proof of Theorem~\ref{thm:fixed_pt_thm}.

\begin{rmk} \label{rmk:commutator_confirm}
    One can also check that the only properties of the para-products used are Bony's inequalities in Appendix~\ref{sec:para-products}. In particular, no commutators or estimates related to them have appeared. This is a byproduct of the integration by parts and ``completing products" operations as described in Section~\ref{sec:overview}. 
\end{rmk}

\section{Convergence of the flux}
\label{sec:flux_convergence}

We now prove that for the solution $\vec{u}_\eps$ and $\vec{u}_0$ obtained in Theorem~\ref{thm:fixed_pt_thm}, we also have the convergence of the flux $a_\eps \nabla u_\eps \rightarrow \bar{a} \nabla u_0$. We first state an additional lemma that is used in the flux convergence.

\begin{lem} \label{lem:aA}
Fix $T>0$. For every $\sigma>0$ and every sufficiently small $\kappa>0$, there exists $\theta>0$ depending on $\sigma$ and $\kappa$ such that
\begin{equation*}
    \big\| a_\eps \big( \id + \nabla \iI_\eps (\div a_\eps) \big) - \bar{a} \big\|_{L_{T}^{\infty,\sigma} \cC_x^{-\kappa}} \lesssim \eps^\theta\;.
\end{equation*}
\end{lem}
\begin{proof}
By \eqref{e:extra_0_error}, for every $\sigma>0$, there exists $\theta>0$ such that
\begin{equation*}
    \begin{split}
    \big\| a_\eps \big( \id + \nabla \iI_\eps (\div a_\eps) \big) - a_\eps \Phi_\eps \big\|_{L_x^\infty} &\lesssim \eps^\theta \cdot t^{-\frac{\sigma}{2}} + \|\nabla e^{t \lL_0} \big( \eps \chi(\cdot/\eps) \big)\|_{L_x^\infty}\\
    &\lesssim \eps^\theta \cdot t^{-\frac{\sigma}{2}}\;.
    \end{split}
\end{equation*}
Here, the second bound follows from Lemma~\ref{lem:initial} and that $\|\eps \chi(\cdot/\eps)\|_{\cC^{1-\sigma}} \lesssim \eps^\sigma \cdot t^{-\frac{\sigma}{2}}$. In addition, we have
\begin{equation*}
    \|a_\eps \Phi_\eps - \bar{a}\|_{\cC^{-\kappa}} \lesssim \eps^\theta\;.
\end{equation*}
The desired claim then follows. 
\end{proof}

Unlike in Theorem~\ref{thm:fixed_pt_thm}, it is more convenient to consider the norm weighted in time defined in \eqref{e:spacetime} to prove the convergence of the flux. We have the following theorem regarding the flux. 

\begin{thm} \label{thm:flux_convergence}
    Let $\vec{u}_\eps = (u_\eps, v_\eps, w_\eps)$ and $\vec{u}_0 = (u_0, v_0)$ be the solutions constructed in Theorem~\ref{thm:fixed_pt_thm}, and $T_0 > 0$ be the maximal existence time for $\vec{u}_0$. Then for every sufficiently small $\kappa>0$, there exists $\theta>0$ such that for $\sigma = 1-\alpha+2\kappa$ and $T<T_0$, we have
    \begin{equation} \label{e:flux_converge}
        \|a_\eps \nabla u_\eps - \bar{a} \nabla u_0\|_{L_T^{\infty,\sigma} \cC_x^{-\kappa}} \lesssim  \eps^\theta + \|u_\eps(0) - u_0(0)\|_{\cC^\alpha} + \|\Upsilon_\eps - \Upsilon_0\|_{\yY}\;.
    \end{equation}
    In other words, the flux converges in a time weighted version of the space in which the solution, the initial data and enhanced external terms in $\Upsilon_{\eps}$ converge. 
\end{thm}
\begin{proof}
    From the proof of Theorem~\ref{thm:fixed_pt_thm}, we have
    \begin{equation*}
        \nabla u_\eps = g(u_{\eps})\nabla X_{\eps} + \Phi_\eps v_\eps + w_\eps +(\id+ \nabla \iI_\eps \big( \div a_\eps \big)) \Lambda_\eps (u_\eps)\;.
    \end{equation*}
    Multiplying $a_\eps$ on both sides above, we get
    \begin{equation} \label{e:flux_eq_eps}
        a_\eps \nabla u_\eps = g(u_\eps) \fF_\eps + a_\eps \Phi_\eps v_\eps + a_\eps w_\eps + a_\eps \big( \id + \nabla \iI_\eps (\div a_\eps) \big) \Lambda_\eps (u_\eps)\;.
    \end{equation}
    Similarly, we have
    \begin{equation*}
        \bar{a} \nabla u_0 = g(u_0) \fF_0 + \bar{a} v_0 + \bar{a} \Lambda_{0}(u_0) \;.
    \end{equation*}
    The difference can then be written as
    \begin{equation*}
        \begin{split}
        a_\eps \nabla u_\eps - &\bar{a} \nabla u_0 = \big( g(u_\eps) - g(u_0) \big) \fF_\eps + g(u_0) \cdot (\fF_\eps - \fF_0) + a_\eps \Phi_\eps (v_\eps - v_0)\\
        &\big( a_\eps \Phi_\eps - \bar{a} \big) v_0 + a_\eps w_\eps + a_\eps \big( \id + \nabla \iI_\eps (\div a_\eps) \big) \big( \Lambda_\eps (u_\eps) - \Lambda_0 (u_0) \big)\\
        &+ \Big( a_\eps \big( \id + \nabla \iI_\eps (\div a_\eps) \big) - \bar{a} \Big) \Lambda_0 (u_0)\;.
        \end{split}
    \end{equation*}
    The desired bounds for the above terms follow from Theorem~\ref{thm:fixed_pt_thm} and Lemma~\ref{lem:aA}. Here, all proportionality constants depend on $\|(u_\eps, v_\eps, w_\eps)\|_{\xX_T}$ and $\|\Upsilon\|_{\yY}$, which we omit in notation for simplicity. This completes the proof of the theorem. 
\end{proof}

\section{Convergence of enhanced stochastic terms}
\label{sec:stochastic}

In this section, we will show convergences of the stochastic objects to their homogenised limits. The following is the main theorem.

\begin{thm} \label{thm:stochastic}
    Recall the stochastic objects $\Upsilon_\eps$ and $\Upsilon_0$ from \eqref{e:noises_eps} and \eqref{e:noises_limit}, and the norm $\yY$ from \eqref{e:noises_norm}. For every $\kappa>0$, there exists $\theta>0$ such that for every $p \in [1,+\infty)$, we have
    \begin{equation} \label{e:stochastic_convergence}
        \E \|\Upsilon_\eps - \Upsilon_0\|_{\yY}^p \lesssim \eps^{\theta p}\;.
    \end{equation}
    The proportionality constant depends on $\theta$, $\kappa$ and $p$, but is uniform in $\eps \in \N^{-1}$. 
\end{thm}

Theorem~\ref{thm:stochastic} verifies the assumptions on $\Upsilon_\eps$ and $\Upsilon_0$ in Theorems~\ref{thm:fixed_pt_thm} and \ref{thm:flux_convergence}. To prove Theorem~\ref{thm:stochastic}, the basic building block is the object $X_\eps = -\lL_\eps^{-1} \Pi_0^\perp \xi$. Let $G_\eps$ and $G_0$ denote the Green's functions associated to $-\lL_\eps$ and $ -\lL_0$. More precisely, for every $\eps \in [0,1]$, we have
\begin{equation} \label{e:Green_elliptic}
     ( -\lL_\eps^{-1} f ) (x) = \int_{\T^2} G_\eps (x,y) f(y) {\rm d}y
\end{equation}
for $f$ with $\int_{\T^2} f = 0$, and $\Pi_0 \big( G_\eps(x,\cdot) \big) = 0$ for every $x \in \T^2$. Let $\nabla_x G_\eps$ and $\nabla_y G_\eps$ denote the gradient of $G_\eps$ with respect to its first and second spatial variables respectively. We restrict outselves to $\T^2$ here. 

Define the vector valued kernel $R^{G}_\eps$ by
\begin{equation} \label{e:difference_first_order_G}
    R^{G}_\eps (x,y) := (\nabla_x G_\eps)(x,y) - \Phi_\eps (x) \, (\nabla_x G_0)(x,y)\;.
\end{equation}
Let $\rR^{G}_\eps$ be operation given by
\begin{equation} \label{e:operator_R_G}
    (\rR_\eps^{G} f)(x) :=  \int_{\T^2} R^{G}_\eps (x,y) f(y) {\rm d}y\;.
\end{equation}
We shall use the following bounds on $G_\eps$ and $\nabla_x G_\eps - \Phi_\eps \nabla_x G_0$.

\begin{prop}
$G_\eps$ and its derivative satisfy the pointwise bounds
\begin{equation} \label{e:Green_elliptic_bound}
    |G_\eps (x,z)| \lesssim 1+\left| \, \log |x-z| \, \right|\;, \qquad |\nabla_x G_\eps (x,z)| \lesssim \frac{1}{|x-z|}
\end{equation}
uniformly in $x,z \in \T^2$ and $\eps \in \N^{-1}$. Also, for every $\sigma \in (0,1)$, there exists $\theta>0$ such that we have the bound on the difference
\begin{equation} \label{e:Green_elliptic_difference}
    |G_\eps(x,z) - G_0(x,z)| \lesssim \frac{\eps^\theta}{|x-z|^{\sigma}}\;.
\end{equation}
Furthermore, for every $\sigma \in (0,1)$, there exists $\theta>0$ such that
\begin{equation*}
    \left| \nabla_x G_\eps (x,z) - \Phi_\eps (x) \nabla_x G_0(x,z) \right| \lesssim \frac{\eps^\theta}{|x-z|^{1+\sigma}}\;.
\end{equation*}
All proportionality constants are independent of $\eps \in \N^{-1}$. 
\end{prop}
\begin{proof}
The two bounds in \eqref{e:Green_elliptic_difference} follow from interior Lipschitz estimates and a duality method in \cite{FLinCompactness} and \cite{Shen2018}. The uniform continuity bound \eqref{e:Green_elliptic_difference} then follows from \eqref{e:Green_elliptic_bound}. The last estimate on $\nabla_x G_\eps - \Phi_\eps \nabla_x G_0$ can be obtained in the same way illustrated in Proposition~\ref{prop:Green_remainder_derivative}. 
\end{proof}

We will prove Theorem~\ref{thm:stochastic} by interpolating a uniform-in-$\eps$ bound in a slightly stronger space and quantitative convergence in a weaker space. We first state the uniform bound. 

\begin{prop} \label{prop:stochastic_uniform}
For every $\delta>0$ and $p \geq 1$, we have
\begin{equation} \label{e:stochastic_uniform}
    \E |\Pi_0 \xi|^p + \E \|X_\eps\|_{\cC^{1-\delta}}^p + \sum_{j=3}^{7} \E \|\Upsilon_\eps^{(j)}\|_{\cC^{-\delta}}^{p} \lesssim 1
\end{equation}
uniformly in $\eps \in \N^{-1}$. 
\end{prop}
\begin{proof}
Since all objects live in Wiener chaos up to order two, by hypercontractivity, it suffices to prove the statement for $p=2$. 

By the second bound in \eqref{e:Green_elliptic_bound}, we have for every $\delta>0$,
\begin{equation*}
    \left| \E \big( \nabla X_\eps (x) \, \nabla^{\sT} X_\eps(y) \big) \right| \lesssim \int_{\T^2} |(\nabla_1 G_\eps)(x,z)| \cdot |(\nabla_1 G_\eps)(y,z)| {\rm d}z \lesssim \frac{1}{|x-y|^\delta}\;,
\end{equation*}
where $\nabla_1$ denotes gradient with respect to the first variable. The bound for the $\|\Upsilon_\eps^{(j)}\|_{\cC^{-\delta}}$, for $j = 2, \dots, 6$, then follows from the above covariance inequality. 

As for $\|X_\eps\|_{\cC^{1-\delta}}$, we first interpolate the two bounds in \eqref{e:Green_elliptic_bound} to get
\begin{equation*}
    |G_\eps (x,z) - G_\eps(y,z)| \lesssim \frac{|x-y|^{1-\frac{\delta}{2}}}{|x-z|^{1-\frac{\delta}{4}}}\;.
\end{equation*}
We then have
\begin{equation*}
    \E |X_\eps(x) - X_\eps(y)|^2 \lesssim \int_{\T^2} |G_\eps(x,z) - G_\eps(y,z)|^2{\rm d}z \lesssim |x-y|^{2-\delta}\;.
\end{equation*}
The desired bound for $\E \|X_\eps\|_{\cC^{1-\delta}}$ then follows from Kolmogorov's continuity criterion. This completes the proof of the proposition. 
\end{proof}

The convergences to the desired homogenised limits are more subtle, as they require cancellations from the structured oscillations (such as the divergence-free structure of $a_\eps \Phi_\eps$, etc.). In order to simplify arguments, we make repeated use of integration by parts to utilise such structures, which will lead to convergences in weaker spaces (typically in $\cC_{x}^{-1-\kappa}$). But when interpolated with the boundedness in \eqref{e:stochastic_uniform} in stronger spaces, one obtains the convergences in the desired spaces. We will frequently use the two-scale expansion
\begin{equation} \label{e:expansion_nablaY}
    \nabla X_\eps(x) = \Phi_\eps(x) \nabla X_0(x) + \int_{\T^2} R^{G}_\eps(x,z) \, (\Pi_{0}^{\perp}\xi)(z) {\rm d}z
\end{equation}
for $\nabla X_\eps$. 

\subsection{The free field}
\label{sec:free_field}

\begin{prop} \label{prop:free_field}
For every $\kappa$, there exists $\theta>0$ such that, for every $\eps\in\N^{-1}$, $p\geq 1$,
\begin{equation*}
    \E \|X_\eps - X_0\|^{p}_{\cC_x^{1-\kappa}(\T^2)} \lesssim \eps ^{p\theta} \;
\end{equation*}
\end{prop}
\begin{proof}
We have the pointwise bound
\begin{equation*}
    \E |X_\eps(x) - X_0(x)|^2 = \int_{\T^2} \big( G_\eps(x,z) - G_0(x,z) \big)^{2} {\rm d}z \lesssim \eps^{2 \theta}
\end{equation*}
for some $\theta>0$. By Kolmogorov's continuity criterion and hypercontractivity, this gives
\begin{equation*}
    \E \|X_\eps - X_0\|_{L^\infty}^{p} \lesssim \eps^{p \theta}
\end{equation*}
for every $p \geq 1$. The claim then follows by interpolating the above $L^\infty$ bound and the uniform bound in \eqref{e:stochastic_uniform}. 
\end{proof}

\subsection{The linear flux and its variants}
\label{sec:flux}

In this section, we show the convergence of the linear flux $\fF_\eps = a_\eps \nabla X_\eps$ to its homogenised limit $\fF_0 =\bar{a} \nabla X_0$, as well as several variants of it. The main ingredient is the following lemma. 

\begin{lem} \label{lem:flux_stochastic_general}
    Let $H: \T^2 \rightarrow \R^{2 \times 2}$ be bounded, and write $H_\eps := H (\cdot / \eps)$. Then for every $\kappa>0$, there exists $\theta>0$ such that for every $p \in (1, +\infty)$ and $\eps \in \N^{-1}$, we have
    \begin{equation*}
        \E \big\| H_\eps \nabla X_\eps - \Pi_0 \big( H \Phi \big) \nabla X_0  \big\|_{\cC_x^{-\kappa}}^p \lesssim \eps^{p \theta} (1+\|H\|_{L^\infty})^{p}\;,
    \end{equation*}
    where
    \begin{equation*}
        \Pi_0 \big( H \Phi \big) = \int_{\T^2} H(x) \Phi(x) {\rm d}x = \int_{\T^2} H(x) \big( \id + (\nabla \chi)(x) \big) {\rm d}x
    \end{equation*}
    is the average of $H \Phi$ on the torus. 
\end{lem}
\begin{proof}
Since $\Pi_0 (H_\eps \Phi_\eps) = \Pi_0 (H \Phi)$ for every $\eps \in \N^{-1}$, we have
\begin{equation*}
    H_\eps \nabla X_\eps - \Pi_0 (H \Phi) \nabla X_0 = \Pi_0^\perp \big( H_\eps \Phi_\eps \big) \nabla X_0 + H_{\eps}\rR_\eps^G(\Pi_{0}^{\perp}\xi)\;,
\end{equation*}
where we recall from \eqref{e:operator_R_G} that $\rR_\eps^G$ denotes the convolution with the kernel $R_\eps^G$. The desired bound for the remainder $H_{\eps}\rR_\eps^G(\Pi_{0}^{\perp}\xi)$ follows directly from the pointwise bound for the kernel $R_\eps^G$. As for the first part on the right hand side, testing against $\phi \in \cC^\infty(\T^2; \R^2)$ and integrating by parts gives
\begin{equation*}
    \bigscal{\Pi_0^\perp \big( H_\eps \Phi_\eps \big) \nabla X_0, \; \phi} =  \Bigscal{\Pi_{0}^{\perp}\xi, \;  (\lL_{0}^*)^{-1}\div \big( \Pi_0^\perp (H_\eps \Phi_\eps)^{\sT} \phi \big)}\;.
\end{equation*}
Hence, by Lemma~\ref{lem:oscillation}, we have
\begin{equation*}
    \E \big| \bigscal{\Pi_0^\perp \big( H_\eps \Phi_\eps \big) \nabla X_0, \; \phi} \big|^2 \lesssim \big\| \Pi_0^\perp \big( H_\eps \Phi_\eps \big)^{\sT} \varphi \big\|_{H^{-1}}^{2} \lesssim \eps^{2 \theta} \big(1 + \|H \Phi\|_{L^\infty}\big)^2 \|\varphi\|_{\cC^\kappa}^2\;.
\end{equation*}
The claim then follows by interpolating the above bound with the known uniform-in-$\eps$ bound. 
\end{proof}

Applying the above lemma with $H_\eps = a_\eps$, $\Phi_\eps^{\sT} a_\eps$, $a_\eps^{\sT}$ and $\Phi_\eps^{\sT} a_\eps^{\sT}$ gives the desired bounds for the flux $\fF_\eps$ and its variants $\Phi_\eps^{\sT} \fF_\eps$, $a_\eps^{\sT} \nabla X_\eps$ and $\Phi_\eps^{\sT} a_\eps^{\sT} \nabla X_\eps$.

\begin{cor} \label{cor:fluxes}
There exists $\theta>0$ such that for $j=3,4,5,6$ and every $p \geq 1$, we have
\begin{equation*}
    \E \| \Upsilon_\eps^{(j)} - \Upsilon_0^{(j)} \|_{\cC^{-\kappa}}^{p} \lesssim \eps^{p \theta}\;.
\end{equation*}
The proportionality constant is independent of $\eps \in \N^{-1}$. 
\end{cor}

\subsection{The Wick ordered term}
\label{sec:Wick}

\begin{prop}
    For every $\kappa>0$, there exists $\theta>0$ such that for every $p\in(1,+\infty)$, 
    \begin{equation*}
        \E||\nabla^{\sT} X_\eps \diamond \fF_{\eps}-\nabla^{\sT} X_0 \diamond \fF_{0}||^p_{\cC_{x}^{-\kappa}}\lesssim \eps^{p\theta}.
    \end{equation*}
\end{prop}
\begin{proof}
    Similarly as before, thanks to the uniform-in-$\eps$ bound in Proposition~\ref{prop:stochastic_uniform}, it suffices to show that the convergence of $\nabla^{\sT} X_\eps \diamond \fF_\eps$ to $\nabla^{\sT} X_0 \diamond \fF_0$ in a weak space. Write
    \begin{equation*}
        \nabla^{\sT} X_{\eps} \diamond \fF_{\eps} = \div(X_{\eps}\diamond \fF_{\eps})+ X_{\eps}\diamond \Pi_{0}^{\perp} \xi = \div (X_\eps \fF_\eps) - \div \E(X_\eps \fF_\eps) + X_{\eps}\diamond \Pi_{0}^{\perp} \xi\;.
    \end{equation*}
    For the first term, we have the deterministic bound
    \begin{equation*}
        \begin{split}
        &\phantom{111}\|\div (X_\eps  \fF_\eps) - \div (X_0  \fF_0)\|_{\cC^{-1-\kappa}} \\ &\lesssim \|(X_\eps - X_0)  \fF_\eps\|_{\cC^{-\kappa}} + \|X_0 (\fF_\eps - \fF_0)\|_{\cC^{-\kappa}}\\
        &\lesssim \|X_\eps - X_0\|_{\cC^{2\kappa}} \|\fF_\eps\|_{\cC^{-\kappa}} + \|X_0\|_{\cC^{2\kappa}} \|\fF_\eps - \fF_0\|_{\cC^{-\kappa}}\;.
        \end{split}
    \end{equation*}
    The desired bound for this term then follows from Proposition~\ref{prop:free_field} and Corollary~\ref{cor:fluxes} for $\Upsilon_\eps^{(2)} = \fF_\eps$. For the second term with expectation, we formally have
    \begin{equation*}
        \E (X_\eps \fF_\eps) = \E (X_\eps a_\eps X_\eps) = \frac{1}{2} a_\eps \nabla \E (X_\eps^2)\;.
    \end{equation*}
    Since
    \begin{equation*}
        \E |X_\eps^2(x)| = \int_{\T^2} G_\eps^2 (x,y) {\rm d} y
    \end{equation*}
    does not depend on $x$, the second term is identically $0$. This can be justified rigorously via smooth approximation. 
    
    For the third term, we have
    \begin{equation*}
        (X_\eps - X_0) \diamond \Pi_0^\perp \xi = -\Pi_0 \xi \cdot (X_\eps - X_0) + (X_\eps - X_0) \diamond \xi\;,
    \end{equation*}
    where we used that $\E(\Pi_0 \xi \cdot X_\eps  ) = 0$ for $\eps \in \N^{-1}$. It suffices to consider the second term on the right hand side above. Testing against $\phi \in \cC^\infty (\T^2; \R)$ on both sides, we get
    \begin{equation*}
        \begin{split}
        \E &\big| \bigscal{(X_\eps - X_0) \diamond \xi, \; \phi} \big|^2 = \int_{\T^2} \E \big| X_\eps(x) - X_0(x) \big|^2 \cdot |\phi^2(x)| \, {\rm d}x\\
        &+ \int_{(\T^2)^2}  \big( G_\eps(x,y) - G_0(x,y) \big) \, \big( G_\eps(y,x) - G_0(y,x) \big) \, \phi(x)\, \phi(y) \, {\rm d}x {\rm d}y\;.
        \end{split}
    \end{equation*}
    By the Green's function estimate \eqref{e:Green_elliptic_difference} and Proposition~\ref{prop:free_field}, we get the bound
    \begin{equation*}
        \E \big| \bigscal{(X_\eps - X_0) \diamond \xi, \; \phi} \big|^2 \lesssim \eps^{2\theta} \|\phi\|_{L^2}^2
    \end{equation*}
    for some $\theta>0$. By scaling and hypercontractivity, this implies there exists $\theta>0$ such that
    \begin{equation*}
        \E \big\| \bigscal{(X_\eps - X_0) \diamond \xi, \; \phi} \big\|_{\cC^{-1-\kappa}}^p \lesssim \eps^{p \theta}
    \end{equation*}
    for every $p \geq 1$. The conclusion then follows. 
    \end{proof}

\appendix

\section{Bounds on Green's functions}

In this appendix, we give asymptotic expansions and characterise the behaviours of parabolic Green's function up to a level that is needed in the main text. The statements here mainly follow \cite{GengShen2020, GengShen2015,Geng2020}, which are based on methods in the early works \cite{FLinCompactness,FLinCompactness1989} on elliptic Green's functions. 

Let $K_\eps$ be the fundamental solution of $\partial_t-\lL_\eps$ in $\R^+ \times \R^d$. The following lemma follows from estimates for $K_{\eps}$ in \cite[Theorems~1.1, 1.2 and 1.3]{GengShen2020}.

\begin{lem}
    For the fundamental solution $K_\eps$,  we have 
    \begin{align*}
        | K_{\eps}(t,x,y)- K_{0}(t,x,y)|\lesssim \eps t^{-\frac{d+1}{2}}e^{-\frac{c|x-y|^2}{t}}\;.
    \end{align*}
    Its first order derivative satisfies the bound
    \begin{align*}
        |\nabla_{x} K_{\eps}(t,x,y)-\Phi_{\eps}(x)\nabla_{x} K_{0}(t,x,y)|\lesssim \eps t^{-\frac{d+2}{2}}e^{-\frac{c|x-y|^2}{t}}\log(\frac{\sqrt{t}}{\eps}+2)\;.
    \end{align*}
    Finally, its second order cross derivative satisfies
    \begin{align*}
        |\nabla_{x}\nabla_{y} K_{\eps}(t,x,y)-\Phi_{\eps}(x)\nabla_{x,y}^{2} K_{0}(t,x,y)(\Phi_{\eps}^{*})^{\mathsf{T}}(y)|\lesssim \eps t^{-\frac{d+3}{2}} e^{-\frac{c|x-y|^2}{t}}\log(\frac{\sqrt{t}}{\eps}+2)\;.
    \end{align*}
    All the bounds are uniform in $\eps\in (0,1)$, and $T>0$, $ t\in (0,T]$, $x,y \in \R^{d}$.
\end{lem}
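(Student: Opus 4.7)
The plan is to quote Theorems 1.1, 1.2, and 1.3 of \cite{GengShen2020} directly, since each of the three displayed bounds is essentially a restatement of one of those results in our notation. Concretely, bound (1) (the pointwise difference $K_\eps - K_0$) matches the convergence rate for the parabolic fundamental solution in Theorem 1.1; bound (2) is the first-order two-scale expansion with corrector $\Phi_\eps$ on the $x$-variable, i.e.\ Theorem 1.2; and bound (3) is the mixed second-order expansion, where the factor $\Phi_\eps(x)$ on the left and $(\Phi_\eps^*)^{\sT}(y)$ on the right arise from applying the expansion in $x$ against $\lL_\eps$ and in $y$ against the adjoint $\lL_\eps^* = \div(a^{\sT}(\cdot/\eps) \nabla)$, which is exactly the content of Theorem 1.3. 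The verification consists only of matching the Gaussian kernel $e^{-c|x-y|^2/t}$ and the borderline logarithmic factor $\log(\sqrt{t}/\eps + 2)$ with the formulations there.

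If one wished to reprove these bounds from first principles, the natural strategy is the compactness method of Avellaneda--Lin \cite{FLinCompactness, FLinCompactness1989} adapted to the parabolic setting. The first step is to establish uniform Lipschitz estimates for caloric functions of $\partial_t - \lL_\eps$ at scales $r \gtrsim \eps$, by a blow-up/compactness argument that reduces, in the limit, to the constant-coefficient operator $\partial_t - \lL_0$. Next, one writes the two-scale ansatz
\begin{equation*}
    K_\eps(t,x,y) \;\approx\; K_0(t,x,y) + \eps\, \chi(x/\eps)\cdot \nabla_x K_0(t,x,y),
\end{equation*}
uses the corrector equation and the divergence-free structure of $a_\eps \Phi_\eps - \bar{a}$ to rewrite the residual in divergence form, and then controls the Duhamel representation of the resulting error against $\partial_t - \lL_\eps$ using the uniform Lipschitz estimates. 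Differentiating in $x$ (respectively in $y$, using the adjoint equation) produces bounds (2) and (3); the extra derivatives cost one power of $t^{-1/2}$ each, which is where the $t^{-(d+2)/2}$ and $t^{-(d+3)/2}$ scalings come from.

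The main obstacle, when arguing from scratch, is handling the near-diagonal regime $\sqrt{t} \lesssim \eps$, where the two-scale expansion ceases to be sharper than the trivial pointwise Gaussian bound on $\nabla_x K_\eps$ alone. This is precisely the source of the logarithmic factor $\log(\sqrt{t}/\eps + 2)$ in (2) and (3): one splits into $\sqrt{t} \leq \eps$, where the two terms on the left-hand side of the bound are estimated separately by standard Nash--Aronson-type Gaussian bounds for $\nabla_x K_\eps$ and $\nabla_x K_0$, and $\sqrt{t} > \eps$, where the corrector expansion yields the factor $\eps$; summing dyadically over scales between $\eps$ and $\sqrt{t}$ produces the log loss. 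Since \cite{GengShen2020} already carries out these arguments in the parabolic periodic setting, the cleanest path is to cite their results and verify the form of the bounds, rather than reprove them; the subsequent applications in Propositions~\ref{prop:Green_remainder_point} and~\ref{prop:Green_remainder_derivative} only use the bounds as stated.
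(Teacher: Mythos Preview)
Your proposal is correct and matches the paper's approach exactly: the paper states this lemma as a direct consequence of \cite[Theorems~1.1, 1.2 and 1.3]{GengShen2020} without further argument. Your additional commentary on the Avellaneda--Lin compactness strategy and the origin of the logarithmic factor is accurate context, but the paper itself simply cites the source and moves on.
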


Let $Q_\eps$ be the Green's function of $\partial_t-\lL_\eps$ in $\R^+ \times \T^d$, for $\eps\in \N^{-1}$. Then we have
\begin{align}\label{eq: relation between K eps and Q eps}
    Q_{\eps}(t,x,y)=\sum_{k\in\Z^d}K_{\eps}(t,x,y+k)\;.
\end{align}
By \cite[(1-6)]{GengShen2020}, essentially based on \cite{Byun2007,Cho2008,GengShen2015}, we have
\begin{align*}
    |K_\eps(t,x,y)|\lesssim t^{-\frac{d}{2}} e^{-\frac{c|x-y|^2}{t}}\;.
\end{align*}
 % Then the sum in \eqref{eq: relation between K eps and Q eps} converges. By the formula and estimates for $K_{\eps}$ in \cite[Theorem 1.1, 1.2 and 1.3]{GengShen2020}, we have similar estimates for $Q_{\eps}$ as in \cite[Theorem 1.1, 1.2 and 1.3]{GengShen2020}. We first give some standard Green's function estimate for $Q_\eps$. 
 It follows that the sum in \eqref{eq: relation between K eps and Q eps} converges.  We first give some standard Green's function estimates for $Q_\eps$. 

\begin{prop} \label{prop:Green_point}
    For the Green's function $Q_\eps$, we have that there exists $c>0$ such that
    \begin{equation*}
        |Q_\eps (t,x,y)| \lesssim t^{-\frac{d}{2}} e^{-\frac{c |x-y|^2}{t}}\;.
    \end{equation*}
    Its first order derivatives satisfy the bounds
    \begin{equation*}
        |(\nabla_x Q_\eps)(t,x,y)| +|(\nabla_y Q_\eps)(t,x,y)| \, \lesssim t^{-\frac{d+1}{2}} e^{-\frac{c |x-y|^2}{t}}\;.
    \end{equation*}
    Finally, its second order cross derivative satisfies
    \begin{equation*}
        |(\nabla_{x,y}^2 Q_\eps)(t,x,y)| \lesssim t^{-\frac{d+2}{2}} e^{-\frac{c |x-y|^2}{t}}\;.
    \end{equation*}
    All bounds are uniform in $\eps \in \N^{-1}$, $x, y \in \T^d$ and $t > 0$. 
\end{prop}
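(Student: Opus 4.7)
The strategy is to pass from the fundamental solution $K_\eps$ on $\R^+ \times \R^d$ to the Green's function $Q_\eps$ on $\R^+ \times \T^d$ via the lattice sum identity \eqref{eq: relation between K eps and Q eps}, applying pointwise Gaussian bounds on $K_\eps$ termwise. What is needed, beyond the zeroth-order Aronson bound $|K_\eps(t,x,y)| \lesssim t^{-d/2} e^{-c|x-y|^2/t}$ (quoted in the paragraph before the proposition from \cite{Byun2007, Cho2008, GengShen2015}), are the analogous bounds on $\nabla_x K_\eps$, $\nabla_y K_\eps$, and $\nabla_{x,y}^2 K_\eps$ with an extra power of $t^{-1/2}$ in the prefactor for each derivative. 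These derivative bounds on $K_\eps$ are the real input: they follow from the uniform large-scale $C^{1,\alpha}$ regularity theory of Avellaneda--Lin \cite{FLinCompactness, FLinCompactness1989}, extended to the parabolic setting in \cite{GengShen2020, GengShen2015}. Alternatively one can read them off from the preceding lemma by combining its asymptotic expansions with the standard constant-coefficient bounds on $\nabla K_0$, $\nabla^2 K_0$ and with the $L^\infty$-boundedness of $\Phi_\eps$ and $\Phi_\eps^*$, noting that the logarithmic factor $\log(\sqrt{t}/\eps + 2)$ in the difference bounds is harmless once one absorbs it into the Gaussian.

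Granted these three pointwise estimates for $K_\eps$, the passage to $Q_\eps$ is carried out as follows. Fix $x, y \in \T^d \simeq [0,1)^d$ and let $k_0 \in \ZZ^d$ minimise $|x - y - k|$, so that $|x - y - k_0|$ equals the torus distance, which we continue to denote by $|x-y|$. The $k = k_0$ term of $\sum_{k \in \ZZ^d} K_\eps(t,x,y+k)$ already produces the right-hand side of the claimed bound. For the remaining $k \neq k_0$, the elementary inequality
$$|x - (y+k)|^2 \geq \tfrac{1}{2}|x-y|^2 + \tfrac{1}{2}|k-k_0|^2 - C$$
allows one to factor
$$e^{-c|x-(y+k)|^2/t} \lesssim e^{-c|x-y|^2/(2t)} \, e^{-c|k-k_0|^2/(2t)},$$
so that the residual lattice sum $\sum_{k \neq 0} e^{-c|k|^2/(2t)}$ is bounded uniformly for $t$ in any compact subinterval of $(0, \infty)$, merely adjusting constants. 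Differentiation termwise in $x$ or in both $x$ and $y$, necessary for the first- and second-order bounds, is justified by the corresponding Gaussian decay of $\nabla K_\eps$ and $\nabla^2 K_\eps$; thus each of the three estimates propagates from $K_\eps$ to $Q_\eps$ by exactly the same summation argument.

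The substantive step is therefore the first one, namely obtaining the derivative bounds on $K_\eps$ with the correct prefactors and Gaussian decay, uniformly in $\eps$. This is where the homogenisation-specific compactness arguments of \cite{FLinCompactness, FLinCompactness1989, GengShen2020} are indispensable: without them one would obtain constants that degenerate as $\eps \to 0$, since on scales below $\eps$ the operator $\lL_\eps$ does not enjoy classical Schauder estimates. The lattice summation, by contrast, is routine bookkeeping once the bounds on $\R^d$ are in hand.
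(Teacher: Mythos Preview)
Your proposal is correct and follows exactly the approach of the paper: cite the uniform Gaussian bounds on $K_\eps$ and its derivatives from \cite{GengShen2020} (specifically (1--6) and Theorem~2.7 there), and then pass to $Q_\eps$ by the periodisation identity \eqref{eq: relation between K eps and Q eps}. The paper's proof is a one-line reference to these ingredients, whereas you have additionally spelled out the lattice-sum step and the provenance of the $K_\eps$ derivative bounds, but the substance is identical.
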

\begin{proof}
    The estimates above follow from \cite[(1-6)]{GengShen2020}, \cite[Theorem 2.7]{GengShen2020} and that $Q_\eps$ is the periodisation of the fundamental solution $K_{\eps}$ in $\R^+ \times \R^d$. 
\end{proof}

Recall from \eqref{e:difference_first_order} that the kernels $R_\eps$ and $R_\eps^*$ are given by
\begin{equation*}
    \begin{split}
    R_\eps(t,x,y) := (\nabla_x Q_\eps)(t,x,y) - \Phi_\eps(x) \, (\nabla_x Q_0)(t,x,y)\;,\\
    R_\eps^*(t,x,y) := (\nabla_y Q_\eps)(t,x,y) - \Phi_\eps^*(y)  (\nabla_y Q_0)(t,x,y)\;, 
    \end{split}
\end{equation*}
and from \eqref{e:difference_second_order} that
\begin{equation*}
    \widetilde{R}_\eps(t,x,y) := (\nabla_{x,y}^{2} Q_\eps)(t,x,y) - \Phi_\eps(x) \, (\nabla_{x,y}^{2} Q_0)(t,x,y) \, \big( \Phi_\eps^*(y) \big)^{\sT}\;.
\end{equation*}
By the formula and estimates for $K_{\eps}$ in \cite[Theorem 1.1, 1.2 and 1.3]{GengShen2020}, we have similar estimates for $Q_{\eps}$ as in \cite[Theorem 1.1, 1.2 and 1.3]{GengShen2020}. We therefore have the following bounds.

\begin{prop} \label{prop:Green_Gaussian}
    There exist $c, C>0$ such that the following bounds are true uniformly over $x,y \in \T^d$, $t>0$ and $\eps \in \N^{-1}$:
    \begin{equation*}
        \big| Q_\eps(t,x,y) - Q_0(t,x-y) \big| \lesssim \eps \, t^{-\frac{d+1}{2}} e^{-\frac{c |x-y|^2}{t}}\;,
    \end{equation*}
    and
    \begin{equation*}
        | R_\eps(t,x,y)|+|R_\eps^*(t,x,y)| \lesssim \eps \, t^{-\frac{d+2}{2}} e^{-\frac{c |x-y|^2}{t}} \log \Big(2 + \frac{\sqrt{t}}{\eps} \Big)\;,
    \end{equation*}
    and, finally,
    \begin{equation*}
        \big| \widetilde{R}_\eps(t,x,y) \big| \lesssim \eps \, t^{-\frac{d+3}{2}} e^{-\frac{c |x-y|^2}{t}} \log \Big(2 + \frac{\sqrt{t}}{\eps} \Big)\;.
    \end{equation*}
\end{prop}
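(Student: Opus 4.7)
The plan is to derive the three bounds from the corresponding estimates for the whole-space fundamental solution $K_\eps$ stated in the first lemma of this appendix, via the periodisation identity \eqref{eq: relation between K eps and Q eps}, $Q_\eps(t,x,y) = \sum_{k \in \ZZ^d} K_\eps(t,x,y+k)$. Since the correctors $\Phi_\eps(x)$ and $\Phi_\eps^*(y)$ do not depend on the lattice index $k$, we have termwise expansions
\begin{align*}
Q_\eps(t,x,y) - Q_0(t,x-y) &= \sum_{k \in \ZZ^d} \bigl( K_\eps(t,x,y+k) - K_0(t,x,y+k) \bigr)\;,\\
R_\eps(t,x,y) &= \sum_{k \in \ZZ^d} \bigl( (\nabla_x K_\eps)(t,x,y+k) - \Phi_\eps(x) (\nabla K_0)(t,x-y-k) \bigr)\;,
\end{align*}
and analogous identities for $R_\eps^*$ and $\widetilde{R}_\eps$ (using that $K_0$ is translation invariant, so $\nabla K_0$ is an unambiguous single-variable gradient). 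Applying the pointwise $K_\eps$ bounds from the first lemma to each summand yields an estimate of the claimed form, but with $|x-y|$ replaced by the Euclidean distance $|x-y-k|$ in the Gaussian factor, and the prefactors $\eps\, t^{-(d+1)/2}$, $\eps\, t^{-(d+2)/2}\log(2+\sqrt{t}/\eps)$, $\eps\, t^{-(d+3)/2}\log(2+\sqrt{t}/\eps)$ respectively.

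It remains to verify that the lattice sum of Gaussians can be absorbed into a single Gaussian of the same type. Interpreting $|x-y|$ on the torus as the minimal Euclidean representative and letting $k_0 \in \ZZ^d$ be a corresponding minimising shift, every other $k \in \ZZ^d$ satisfies $|x-y-k| \geq c_0(1 + |k-k_0|)$ for a universal $c_0 > 0$. Splitting the sum into the term $k=k_0$ and the tail gives
\begin{equation*}
\sum_{k \in \ZZ^d} e^{-c|x-y-k|^2/t} \;\lesssim\; e^{-c|x-y|^2/t} + \sum_{m \in \ZZ^d \setminus \{0\}} e^{-c c_0^2(1+|m|)^2/t}\;,
\end{equation*}
and the tail sum is uniformly bounded in $t>0$. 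Since $|x-y| \leq \sqrt{d}/2$ on the torus, the tail constant can be absorbed into the leading exponential at the cost of slightly decreasing $c$, leaving a bound of the form $e^{-c'|x-y|^2/t}$ with a new universal $c' \in (0,c)$.

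Combining this Gaussian lattice summation with the termwise $K_\eps$ estimates yields the three displayed inequalities, the logarithmic factors $\log(2+\sqrt{t}/\eps)$ being carried over unchanged. The only technical point is the routine lattice summation above; I do not anticipate any serious obstacle, since all the hard analytic content is already contained in the quoted $K_\eps$ estimates from \cite{GengShen2020}.
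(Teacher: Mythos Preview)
Your approach is exactly the one the paper uses: the paper's proof is the single sentence ``All these bounds follow from \cite[Theorems~1.1, 1.2 and 1.3]{GengShen2020} and that $Q_\eps$ is the periodisation of the fundamental solution in $\R^+ \times \R^d$,'' and you have simply spelled out the periodisation step. One small correction: the tail sum $\sum_{m\neq 0} e^{-cc_0^2(1+|m|)^2/t}$ is \emph{not} uniformly bounded over all $t>0$ (it blows up like $t^{d/2}$ as $t\to\infty$), only over $t\in(0,T]$ with a $T$-dependent constant---this matches the first lemma's hypothesis and is all that is needed in the paper, so the argument goes through once you restrict to bounded time intervals; the cleanest way to absorb the tail into the Gaussian is to use $|x-y-k|^2 \geq \tfrac{1}{2}|x-y-k_0|^2 + \tfrac{1}{2}|x-y-k|^2$ (from minimality of $k_0$) rather than the pointwise lower bound you state, which is not quite correct in high dimension.
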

\begin{proof}
    All theses bounds follow from \cite[Theorems~1.1, 1.2 and 1.3]{GengShen2020} and that $Q_\eps$ is the periodisation of the fundamental solution in $\R^+ \times \R^d$. 
\end{proof}

\begin{prop} \label{prop:Green_remainder_point}
    For every $\sigma_2 \in (0,1)$, there exists $\sigma_1 > 0$ such that
    \begin{equation} \label{e:Green_remainder_point_1}
        \big| R_\eps(t,x,y) \big| \lesssim \frac{\eps^{\sigma_1}}{(\sqrt{t} + |x-y|)^{d+1+\sigma_2}}\;,
    \end{equation}
    and
    \begin{equation} \label{e:Green_remainder_point_2}
        \big| \widetilde{R}_\eps(t,x,y) \big| \lesssim \frac{\eps^{\sigma_1}}{(\sqrt{t} + |x-y|)^{d+2+\sigma_2}}\;,
    \end{equation}
    uniformly over $\eps \in \N^{-1}$, $t>0$ and $x,y \in \T^d$. The kernel $R_\eps^*$ satisfies the same bound as $R_\eps$ in \eqref{e:Green_remainder_point_1}. 
\end{prop}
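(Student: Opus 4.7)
For each of $R_\eps$, $\widetilde R_\eps$, $R_\eps^*$, the plan is to interpolate between two Gaussian-type bounds and then convert Gaussian decay to polynomial decay. One bound is the $\eps$-gain estimate with logarithmic factor from Proposition~\ref{prop:Green_Gaussian}; the other is the sharp $\eps$-free bound obtained from Proposition~\ref{prop:Green_point} via the triangle inequality applied to the definition of the remainder kernel. Interpolation trades the $\eps$-factor for a loss of scaling matching the target exponent in \eqref{e:Green_remainder_point_1}--\eqref{e:Green_remainder_point_2}, and the logarithmic factor is absorbed at the cost of a tiny extra loss of scaling, which is compensated by slightly enlarging the interpolation parameter.

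\textbf{Execution for $R_\eps$.} From Proposition~\ref{prop:Green_Gaussian} and the triangle inequality applied to $R_\eps = \nabla_x Q_\eps - \Phi_\eps \nabla Q_0$ (using $\|\Phi_\eps\|_{L^\infty} \lesssim 1$ and the gradient bounds of Proposition~\ref{prop:Green_point}), the two inputs read
\[
|R_\eps(t,x,y)| \lesssim \eps\, t^{-\frac{d+2}{2}} e^{-c|x-y|^2/t} \log\Big(2 + \tfrac{\sqrt{t}}{\eps}\Big), \qquad |R_\eps(t,x,y)| \lesssim t^{-\frac{d+1}{2}} e^{-c|x-y|^2/t}.
\]
Using $e^{-c|x-y|^2/t} \lesssim (\sqrt{t}/(\sqrt{t}+|x-y|))^N$ for any $N \geq 0$, these upgrade to polynomial bounds at parabolic exponents $d+2$ and $d+1$ respectively. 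For $\sigma_2' \in (\sigma_2, 1)$ to be fixed later, interpolating these with weights $\sigma_2'$ and $1-\sigma_2'$, and then absorbing the factor $\log(2+\sqrt{t}/\eps)^{\sigma_2'}$ via $\log(2+\tau) \lesssim_\delta (1+\tau)^\delta$ together with $(\sqrt{t})^{\sigma_2' \delta} \leq (\sqrt{t}+|x-y|)^{\sigma_2' \delta}$, yields
\[
|R_\eps(t,x,y)| \lesssim \frac{\eps^{\sigma_2'(1-\delta)}}{(\sqrt{t}+|x-y|)^{d+1+\sigma_2'(1-\delta)}}.
\]
Choosing $\delta := 1 - \sigma_2/\sigma_2' \in (0,1)$ so that $\sigma_2'(1-\delta) = \sigma_2$ gives \eqref{e:Green_remainder_point_1} with $\sigma_1 := \sigma_2$.

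\textbf{Other kernels and main obstacle.} The estimate \eqref{e:Green_remainder_point_2} for $\widetilde R_\eps$ is obtained by the identical two-step argument, starting from the analogues $|\widetilde R_\eps| \lesssim \eps\, t^{-(d+3)/2} e^{-c|x-y|^2/t} \log(2+\sqrt{t}/\eps)$ and $|\widetilde R_\eps| \lesssim t^{-(d+2)/2} e^{-c|x-y|^2/t}$ supplied by Propositions~\ref{prop:Green_Gaussian} and~\ref{prop:Green_point}; these convert to polynomial bounds at exponents $d+3$ and $d+2$, and the same interpolation and log-absorption produce the $(d+2+\sigma_2)$-decay with $\sigma_1 = \sigma_2$. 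The bound for $R_\eps^*$ follows by the argument for $R_\eps$ after exchanging the spatial variables and replacing $\Phi_\eps$ by $\Phi_\eps^*$, using the symmetric statements in Propositions~\ref{prop:Green_Gaussian} and~\ref{prop:Green_point}. I do not anticipate a substantive obstacle; the proof is essentially a two-parameter interpolation, and the only minor delicacy is the standard trick of absorbing the logarithm by sacrificing a sliver of $\eps$-power and restoring the target decay exponent via the slight enlargement $\sigma_2' > \sigma_2$.
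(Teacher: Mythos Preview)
Your proposal is correct and follows essentially the same approach as the paper: interpolate the $\eps$-gain Gaussian bound of Proposition~\ref{prop:Green_Gaussian} against the $\eps$-free pointwise bound coming from Proposition~\ref{prop:Green_point}. You are in fact more explicit than the paper, which merely says ``interpolate'' without discussing the logarithmic factor; your treatment of the log via a small sacrifice in the $\eps$-power (and the implicit split into the regimes $\sqrt{t}+|x-y|\lessgtr\eps$) is the standard way to make that step precise.
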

\begin{proof}
    The bounds~\eqref{e:Green_remainder_point_1} and~\eqref{e:Green_remainder_point_2} follow from interpolating Proposition~\ref{prop:Green_Gaussian} and the pointwise bounds
    \begin{equation*}
    |R_\eps (t,x,y)| \lesssim \frac{1}{(\sqrt{t} + |x-y|)^{d+1}}\;, \quad |\widetilde{R}_\eps(t,x,y)| \lesssim \frac{1}{(\sqrt{t} + |x-y|)^{d+2}}
    \end{equation*}
    respectively. The bound for $R_\eps^*$ is exactly the same as $R_\eps$. 
\end{proof}

In the final proposition of this section, we use the following notation.  For $p\in [1,\infty)$, a bounded domain $\dD\subset \R^d$ and an integrable function $f\in L^{p}(\dD)$, we define 
\begin{equation*}
    \|f\|_{\underline{L}^{p}(\dD)}: = \Big(\frac{1}{|\dD|} \int_{ \dD } |f( x )|^p {\rm d}x\Big)^{\frac{1}{p}}\;,
\end{equation*}
where $|\dD|$ is the Lebesgue measure of $\dD$. For $z = (t,x)$ and $\rho>0$, define $\mathcal{D}_\rho(z) := (t-\rho^2,t) \times B(x,\rho)$. We have the following pointwise bounds on derivatives of $R_\eps$ and $\widetilde{R}_\eps$.

\begin{prop} \label{prop:Green_remainder_derivative}
    For $a\in C^{3}(\T^d)$, there exist $\beta_1, \beta_2, C> 0$ such that for every $\eps\in \N^{-1}$, $t\in (0,T)$, $x,y\in \T^{d}$, we have
    \begin{equation*}
        |\nabla_x R_\eps (t,x,y)| + |\nabla_y R_\eps(t,x,y)| \leq  \frac{C \eps^{- \beta_1}}{(\sqrt{t} + |x-y|)^{d+2+\beta_2}}\;,
    \end{equation*}
    and
    \begin{equation*}
        |\nabla_x \widetilde{R}_\eps (t,x,y)| + |\nabla_y \widetilde{R}_\eps(t,x,y)| \leq  \frac{ C \eps^{-\beta_1}}{(\sqrt{t} + |x-y|)^{d+3+\beta_2}}\;.
    \end{equation*}
    Moreover, similar estimates also hold for $R_{\eps}^{*}$ and $\tilde{R}^{*}_{\eps}$. 
\end{prop}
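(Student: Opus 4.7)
The plan is to obtain the claimed estimates by interpolating two bounds: the Gaussian smallness estimate for $R_\eps$ and $\widetilde{R}_\eps$ from Proposition~\ref{prop:Green_Gaussian}, and a crude pointwise bound for their gradients derived by differentiating the defining expressions term by term.

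First I would establish the raw bounds. Differentiating
\[
R_\eps(t,x,y) = (\nabla_x Q_\eps)(t,x,y) - \Phi_\eps(x)(\nabla Q_0)(t,x-y)
\]
in $x$ gives a sum of three pieces: $\nabla_x^2 Q_\eps$, $\nabla\Phi_\eps(x) \cdot (\nabla Q_0)(t,x-y)$, and $\Phi_\eps(x)\, \nabla^2 Q_0(t,x-y)$. Each admits a pointwise bound: the Green's function $Q_0$ satisfies standard parabolic derivative estimates, $|\Phi_\eps|$ is uniformly bounded, while $|\nabla\Phi_\eps|$ is $O(\eps^{-1})$ since $\nabla\Phi_\eps = \eps^{-1}(\nabla^2 \chi)(\cdot/\eps)$. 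The term $\nabla_x^2 Q_\eps$ is controlled by the Avellaneda--Lin/Kenig--Lin--Shen theory (together with $\partial_t Q_\eps$ bounds via the equation) with an acceptable $\eps^{-1}$ loss at worst. The differentiation in $y$ is analogous and does not hit $\Phi_\eps$. This yields the raw bound
\[
|\nabla_x R_\eps(t,x,y)|+|\nabla_y R_\eps(t,x,y)| \lesssim \eps^{-1}\bigl(\sqrt{t}+|x-y|\bigr)^{-(d+1)},
\]
and, by the same expansion applied to $\widetilde R_\eps$,
\[
|\nabla_x \widetilde R_\eps(t,x,y)|+|\nabla_y \widetilde R_\eps(t,x,y)| \lesssim \eps^{-1}\bigl(\sqrt{t}+|x-y|\bigr)^{-(d+2)}.
\]

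Next I would interpolate against Proposition~\ref{prop:Green_Gaussian}, which provides $|R_\eps(t,x,y)| \lesssim \eps\, r^{-(d+2)}\log(2+\sqrt{t}/\eps)$ where $r := \sqrt{t}+|x-y|$. Fixing $(t,y)$ and regarding $R_\eps(t,\cdot,y)$ as a function of $x$, I would apply a Bernstein/Taylor-type inequality
\[
|\nabla f(x)| \lesssim \rho^{-1}\sup_{B(x,\rho)}|f| + \rho\sup_{B(x,\rho)}|\nabla^2 f|,
\]
with $\rho = \eps^{a} r^{b}$ chosen so that the raw bound on $\nabla^2 R_\eps$ (derived by one more round of the same expansion, costing an additional factor of $\eps^{-1}$) balances against the Gaussian smallness of $R_\eps$ itself. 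Optimising $a,b>0$ produces an estimate of the form $\eps^{-\beta_1} r^{-(d+2+\beta_2)}$ for suitable small positive $\beta_1,\beta_2$, and the logarithmic factor is absorbed into $\eps^{-\beta_1}$. The same scheme applied to $\widetilde R_\eps$, with exponents shifted by one throughout, yields the second claimed inequality.

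The main obstacle is selecting the interpolation scale correctly: a naive interpolation between $|R_\eps|$ and $|\nabla R_\eps|$ gives Hölder control of $R_\eps$ rather than a pointwise gradient bound, so one must route through a bound on $\nabla^2 R_\eps$. Carefully tracking the powers of $\eps$ and of $r$ through the Bernstein inequality, and confirming that the resulting exponents $\beta_1$ and $\beta_2$ can simultaneously be made positive, is the only technical point; once this is done, the proof reduces to bookkeeping on the three (resp. four) terms arising from differentiating the definitions of $R_\eps$ and $\widetilde R_\eps$.
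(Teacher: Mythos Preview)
Your core idea — invoking Avellaneda--Lin / Geng--Shen large-scale Lipschitz regularity to control $\nabla_x^2 Q_\eps$ with an $\eps$-dependent constant — is exactly what the paper uses. However, your proposal both contains an error and adds an unnecessary layer.

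The error: your stated raw bound $|\nabla_x R_\eps| \lesssim \eps^{-1} r^{-(d+1)}$ cannot be right, because the term $\Phi_\eps(x)\,\nabla^2 Q_0(t,x-y)$ alone is of size $r^{-(d+2)}$, which is not dominated by $\eps^{-1}r^{-(d+1)}$ in the regime $r<\eps$. The correct raw bound is of the form $\eps^{-C}r^{-(d+2)}$ (and $\eps^{-C}r^{-(d+3)}$ for $\widetilde R_\eps$), obtained by combining a Geng--Shen Lipschitz estimate for $\nabla_x^2 Q_\eps$ with the direct bounds on the remaining pieces.

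The unnecessary layer: once that raw bound is in hand, you are already done. On the torus $r=\sqrt{t}+|x-y|$ is bounded, so $r^{-(d+2)} \le r^{-(d+2+\beta_2)}$ for any $\beta_2>0$, and one may simply take $\beta_1=C$. The proposition only asks for \emph{some} positive $\beta_1,\beta_2$; its role in the paper (see Lemma~\ref{le:kernel_effect_Reps} and Proposition~\ref{prop: widetilde R}) is to be interpolated later against the small-$\eps$ estimate of Proposition~\ref{prop:Green_remainder_point}. Running a second interpolation already here — which would moreover require controlling $\nabla_x^2 R_\eps$, hence $\nabla_x^3 Q_\eps$ and $\nabla^2\Phi_\eps$ — only adds work and regularity demands. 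The paper's proof is more direct: it applies \cite[Theorem~1.1]{GengShen2015} twice to bound $\nabla_x^2 Q_\eps$ pointwise (and analogously for the mixed third derivative needed for $\widetilde R_\eps$), and concludes immediately from the term-by-term decomposition.
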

\begin{proof}
The bound for $\nabla_y R_\eps$ follows directly from the bound on cross derivatives $\nabla_{x,y}^2 Q_\eps$ in Proposition~\ref{prop:Green_point}. So we focus on $\nabla_x R_\eps$. By expression of $R_\eps$, for $\ell \in \{1, \dots, d\}$, we have
\begin{equation} \label{e:R_derivative_expression}
    \d_{x_\ell} R_\eps = \d_{x_\ell} \nabla_x Q_\eps - \d_{x_\ell} \Phi_\eps \cdot \nabla_x Q_0 - \Phi_\eps \cdot \d_{x_\ell} \nabla_x Q_0\;.
\end{equation}
It is standard that
\begin{equation*}
    \|\d_{x_\ell} \Phi_\eps\|_{L^\infty} \lesssim \frac{1}{\eps}\;, \quad \big|(\d_{x_\ell} \nabla_x Q_0)(t,x,y)| \lesssim \big( \sqrt{t} + |x-y| \big)^{-d-2}\;.
\end{equation*}
It then remains to control $\d_{x_\ell} \nabla_x Q_\eps$. Fix $z_0=(t_0,x_0)$ and $w_0=(\tau_0,y_0)$ in $(0,T) \times \T^d$. Define
\begin{equation*}
    \rho := \frac{1}{8} |z_0 - w_0|_\fs = \frac{1}{8} \big( \sqrt{|t_0 - \tau_0|} + |x_0 - y_0| \big)\;.
\end{equation*}
For convenience, we write $Q_\eps$ with parabolic coordinates in the sense that
\begin{equation*}
    Q_{\eps}(t,x;\tau,y) := Q_{\eps}(t-\tau,x,y)\;.
\end{equation*}
By the Lipschitz estimates in \cite[Theorem~1.2]{GengShen2015}, for $p>d+2$, we have
\begin{equation} \label{e:Q_derivative_C1}
    \begin{split}
     \|\nabla_x Q_{\eps}(\,\cdot\,;\,w_0\,)\|_{\cC^{1}_{\fs}(\mathcal{D}_{\rho/{2}}(z_0))} \lesssim &\phantom{1}\rho^{-1}\|\nabla_{x}Q_{\eps}(\,\cdot\,;\,w_0\,)\|_{\underline{L}^2(\dD_\rho(z_0))}\\
     &+\| (\partial_{t}-\mathcal{L}_{\eps,x})(\nabla_{x}Q_{\eps})(\,\cdot\,;\, w_0\,)\|_{\underline{L}^{p}(\mathcal{D}_\rho(z_0))}\;.
     \end{split}
\end{equation}
By Proposition \ref{prop:Green_point}, the first term on the right hand side above satisfied the desired bound. We focus on the second term. Since $Q_\eps$ satisfies
\begin{equation*}
    \d_{t}Q_{\eps}(\cdot ;\,w_0\,) = \lL_{\eps,x}Q_{\eps}(\cdot ;\,w_0\,)
\end{equation*}
in $\dD_{2 \rho}(z_0)$, differentiating $x_\ell$ variable on both sides above gives
\begin{equation*}\begin{split}
    (\partial_{t}-\mathcal{L}_{\eps,x})\d_{x_\ell} Q_{\eps}(\,\cdot\,;\,w_0\,)\,=\,\div_x\big((\d_{\ell}a_{\eps})\nabla_{x} Q_{\eps}(\,\cdot\,;\,w_0\,)\,\big)\;.
    \end{split}
\end{equation*}
By \cite[Theorem~1.4]{GengShen2015} in $\mathcal{D}_{2 \rho}(z_0)$ and cut-off functions, we have
\begin{equation} \label{e:Q_Hessian_Lp}
    \begin{split}
    \|\nabla_x \d_{x_\ell} Q_{\eps}(\,\cdot\,;\,w_0\,)\|_{\underline{L}^{p}(\mathcal{D}_\rho(z_0))} \lesssim &\phantom{1}\rho^{-1}\|\nabla_{x} Q_{\eps}(\,\cdot\,;\,w_0\,)\|_{\underline{L}^{p}(\mathcal{D}_{2 \rho}(z_0))}\\
    &+\| (\d_{\ell} a_{\eps}) \nabla_{x} Q_{\eps}(\,\cdot\,;\,w_0\,)\|_{\underline{L}^{p}(\mathcal{D}_{2 \rho}(z_0))}\;.
    \end{split}
\end{equation}
Combining \eqref{e:R_derivative_expression}, \eqref{e:Q_derivative_C1} and \eqref{e:Q_Hessian_Lp}, we deduce there exist $\beta_3,\beta_4>0$ such that
\begin{equation*}
    \|\nabla^{2}_{x} Q_{\eps}(\,\cdot\,;\,w_0\,)\|_{L^{\infty}(\mathcal{D}_{\rho/2}(z_0))}\lesssim \eps^{-\beta_3} \rho^{-\beta_4},
\end{equation*}
where the proportionality constant does not depend on $z_0,w_0 \in (0,T)\times\T^d$. This implies the bound for $\nabla_x R_\eps$. The bounds for $\nabla_x \widetilde{R}_\eps$ and $\nabla_y \widetilde{R}_\eps$ follow by similar arguments. 
\end{proof}

\section{Preliminaries on oscillations and convolutions}

\begin{lem} \label{lem:oscillation}
    For every $\sigma>0$, we have the bound
    \begin{equation*}
        \|f_\eps\|_{\cC^{-\sigma}(\T^d)} \lesssim \eps^\sigma \|f\|_{L^\infty(\T^d)}\;
    \end{equation*}
    uniformly over $f \in L^\infty(\T^d)$ with $\Pi_0 f =0$ and $\eps = \eps_N = \frac{1}{N}$, where $f_\eps (x) = f(x/\eps)$. 
\end{lem}
\begin{proof}
    We recall the notion of Littlewood-Paley block $\Delta_j$ from Appendix~\ref{sec:para-products} and the definition of $\cC^\gamma$-norm from Definition~\ref{defn:Holder_Besov}. Fix $\eps = \frac{1}{N}$, since $\Pi_0 f = 0$, we know $\widehat{f_\eps}$ is supported on $\{Nk\}_{k \in \Z^d \setminus \{0\}}$. 
    
    Hence, for $j$ such that $2^j \leq \frac{N}{8}$, we have $\Delta_j f_\eps = 0$. For $j$ such that $2^j \geq \frac{N}{8}$, we have
    \begin{equation*}
        \|\Delta_j f_\eps\|_{L^\infty} \lesssim \|f_\eps\|_{L^\infty} \lesssim N^{-\sigma} 2^{\sigma j} \|f\|_{L^\infty\;.}
    \end{equation*}
    This implies
    \begin{equation*}
        \sup_{j \geq -1} \Big( 2^{ -\sigma j} \|\Delta_j f_\eps\|_{L^\infty} \Big) \lesssim N^{-\sigma} \lesssim \eps^\sigma\;.
    \end{equation*}
    The proof is thus complete. 
\end{proof}

\begin{lem} \label{lem:initial}
    Let $0 \leq \gamma \leq \beta \leq 1$. We have
    \begin{equation} \label{e:initial_space}
        \|e^{t \lL_\eps} f\|_{\cC_x^\beta} \lesssim t^{-\frac{\beta-\gamma}{2}} \|f\|_{\cC^\gamma}\;,
    \end{equation}
    and
    \begin{equation} \label{e:initial_time}
        \| \big( e^{t \lL_\eps} - e^{s \lL_\eps} \big) f\|_{L_x^\infty} \lesssim (t-s)^{\frac{\beta}{2}} s^{-\frac{\beta-\gamma}{2}} \|f\|_{\cC^\gamma}\;,
    \end{equation}
    uniformly over $\eps \in \N^{-1}$, $0 \leq s \leq t \leq 1$ and $f \in \cC^\gamma$. In particular, choosing $\beta=\gamma$, we have
    \begin{equation*}
        \|e^{t \lL_\eps} f\|_{\cC_\fs^\gamma} \lesssim \|f\|_{\cC^\gamma}\;.
    \end{equation*}
\end{lem}
\begin{proof}
    We first deal with \eqref{e:initial_space}. For $x, x' \in \T^d$, we have the expression
    \begin{equation*}
        (e^{t \lL_\eps} f)(x) - (e^{t \lL_\eps}f)(x') = \int_{\T^d} \big( Q_\eps(t,x,y) - Q_\eps(t,x',y) \big) \, \big( f(y) - f(x) \big) {\rm d}y\;.
    \end{equation*}
    Using the assumption on $f$, we get
    \begin{equation} \label{e:initial_difference_bound}
        \begin{split}
        &\phantom{11}\big| (e^{t \lL_\eps} f)(x) - (e^{t \lL_\eps}f)(x') \big|\\
        &\lesssim \|f\|_{\cC^\gamma} \int_{\T^d} |y-x|^\gamma \cdot | Q_\eps(t,x,y) - Q_\eps(t,x',y) | \; {\rm d}y\;.
        \end{split}
    \end{equation}
    We split the domain of integration for the right hand side of \eqref{e:initial_difference_bound} into three disjoint regions:
    \begin{equation*}
        \begin{split}
        &\dD_1 = \left\{y: |y-x| \leq \frac{|x-x'|}{2} \right\}\;, \quad \dD_2 = \left\{y: \frac{|x-x'|}{2} < |y-x| \leq 2 |x-x'| \right\}\;,\\
        &\dD_3 = \big\{y: |y-x| > 2 |x-x'| \big\}\;.
        \end{split}
    \end{equation*}
    On $\dD_1$, we have $|y-x| \lesssim |x-x'|$ and $|y-x| \lesssim |y-x'|$. Hence we bound the two factors in the integrand respectively by
    \begin{equation*}
        \begin{split}
        |y-x|^\gamma &\lesssim |x-x'|^\beta \cdot |y-x|^{\gamma-\beta}\;,\\
        |Q_\eps(t,x,y)| + |Q_\eps(t,x',y)| &\lesssim t^{-\frac{d}{2}} \big( e^{-\frac{c|y-x|^2}{t}} + e^{-\frac{c |y-x'|^2}{t}} \big) \lesssim t^{-\frac{d}{2}} e^{-\frac{c |y-x|^2}{t}}\;.
        \end{split}
    \end{equation*}
    This gives
    \begin{equation} \label{e:integral_D1}
        \begin{split}
        &\phantom{111}\int_{\dD_1} |y-x|^\gamma \cdot | Q_\eps(t,x,y) - Q_\eps(t,x',y) | \; {\rm d}y\\
        &\lesssim |x-x'|^\beta \int_{\T^d} t^{-\frac{d}{2}} |y-x|^{-(\beta-\gamma)} e^{-\frac{c|y-x|^2}{t}} {\rm d}t \; \lesssim \; t^{-\frac{\beta-\gamma}{2}} |x-x'|^\beta\;.
        \end{split}
    \end{equation}
    On $\dD_2$, we have $|y-x'| \lesssim |y-x| \lesssim |x-x'|$. Hence we have the pointwise bounds
    \begin{equation*}
        \begin{split}
        |y-x|^\gamma &\lesssim |x-x'|^\beta \cdot |y-x|^{\gamma-\beta} \lesssim |x-x'|^\beta \cdot |y-x'|^{\gamma-\beta}\;,\\
        |Q_\eps(t,x,y)| + |Q_\eps(t,x',y)| &\lesssim t^{-\frac{d}{2}} \big( e^{-\frac{c|y-x|^2}{t}} + e^{-\frac{c |y-x'|^2}{t}} \big) \lesssim t^{-\frac{d}{2}} e^{-\frac{c |y-x'|^2}{t}}
        \end{split}
    \end{equation*}
    for the integrand, which in turn give the integral bound (same as \eqref{e:integral_D1})
    \begin{equation} \label{e:integral_D2}
        \int_{\dD_2} |y-x|^\gamma \cdot | Q_\eps(t,x,y) - Q_\eps(t,x',y) | \; {\rm d}y \; \lesssim \; t^{-\frac{\beta-\gamma}{2}} |x-x'|^\beta\;.
    \end{equation}
    We now turn to the domain $\dD_3$. For the difference between the two heat kernels at different spatial locations, by intermediate value theorem and the gradient estimate in Proposition~\ref{prop:Green_point}, we have
    \begin{equation*}
        \begin{split}
        |Q_\eps(t,x,y) - Q_\eps(t,x',y)| &\leq |x-x'| \cdot \sup_{\theta \in [0,1]} |(\nabla Q_\eps)(t, \theta x + (1-\theta) x', y)|\\
        &\lesssim |x-x'| \cdot t^{-\frac{d+1}{2}} \sup_{\theta \in [0,1]} e^{-\frac{c |y-\theta x - (1-\theta) x'|^2}{t}}\;,
        \end{split}
    \end{equation*}
    where $\nabla Q_\eps$ is the gradient with respect to the first spatial variable of $Q_\eps$. Note that we have
    \begin{equation*}
        |x-x'| \lesssim |y-x| \sim \inf_{\theta \in [0,1]} |y - \theta x - (1-\theta) x'| 
    \end{equation*}
    uniformly over $x,x' \in \T^d$ and $y \in \dD_3$. Hence, we have the pointwise bound
    \begin{equation*}
        \begin{split}
        |Q_\eps(t,x,y) - Q_\eps(t,x',y)| &\lesssim \frac{|x-x'|}{\sqrt{t}} \cdot e^{-\frac{c |x-x'|^2}{t}} \cdot t^{-\frac{d}{2}} \cdot e^{-\frac{c |y-x|^2}{t}}\\
        &\lesssim \Big( \frac{|x-x'|}{\sqrt{t}} \Big)^\beta \cdot t^{-\frac{d}{2}} \cdot e^{-\frac{c |y-x|^2}{t}}\;,
        \end{split}
    \end{equation*}
    where the constant $c$ in the two lines are possibly different. This then gives the integral bound
    \begin{equation} \label{e:integral_D3}
        \begin{split}
        &\phantom{11}\int_{\dD_3} |y-x|^\gamma \cdot | Q_\eps(t,x,y) - Q_\eps(t,x',y) | \; {\rm d}y\\
        &\lesssim \Big( \frac{|x-x'|}{\sqrt{t}} \Big)^\beta \cdot t^{-\frac{d}{2}} \int_{\T^d} |y-x|^\gamma \cdot e^{-\frac{c |y-x|^2}{t}} {\rm d}y \lesssim |x-x'|^\beta \cdot t^{-\frac{\beta-\gamma}{2}}\;.
        \end{split}
    \end{equation}
    Combining the bounds \eqref{e:integral_D1}, \eqref{e:integral_D2} and \eqref{e:integral_D3} and plugging them back to the right hand side of \eqref{e:initial_difference_bound}, we arrive at the desired bound \eqref{e:initial_space}. 
    
    We now turn to \eqref{e:initial_time}. We first note that using the Gaussian bound for $Q_\eps$ in Proposition~\ref{prop:Green_point}, we have
    \begin{equation*}
        \Big| \big( ( e^{t \lL_\eps} - \id) f \big)(x) \Big| \leq \int_{\T^2} |Q_\eps (t,x,y)| \cdot \big| f(y) - f(x) \big| {\rm d}y \lesssim t^{\frac{\gamma}{2}} \|f\|_{\cC^\gamma}\;.
    \end{equation*}
    Hence, combining it with \eqref{e:initial_space}, we have
    \begin{equation*}
        \begin{split}
        \big\| \big( e^{t \lL_\eps} - e^{s \lL_\eps} \big) f \big\|_{L_x^\infty}  &= \big\| \big( e^{(t-s) \lL_\eps} - \id \big) e^{s \lL_\eps} f \big\|_{L_x^\infty}\\
        &\lesssim (t-s)^{\frac{\beta}{2}} \|e^{s \lL_\eps} f\|_{\cC_x^\beta} \lesssim (t-s)^{\frac{\beta}{2}} \cdot s^{-\frac{\beta}{2}} \|f\|_{\cC^\gamma}\;.
        \end{split}
    \end{equation*}
    This proves \eqref{e:initial_time}. The final bound follows immediately by choosing $\beta=\gamma$. 
\end{proof}

\begin{lem} \label{lem:initial_convergence}
    For every $T>0$ and $0<\sigma <\gamma<1$, there exists $\theta>0$ such that
    \begin{equation*}
        \|(e^{t \lL_\eps} - e^{t \lL_0}) f\|_{\cC_\fs^{\gamma-\sigma}([0,T] \times \T^d)} \lesssim \eps^\theta \|f\|_{\cC^\gamma(\T^{d})}\;,
    \end{equation*}
    and
    \begin{equation*}
        \sup_{t \in [0,T]} \Big(t^{\frac{\sigma}{2}} \|(e^{t \lL_\eps} - e^{t \lL_0}) f\|_{\cC_x^\gamma(\T^{d})} \Big) \lesssim \eps^\theta \|f\|_{\cC^\gamma(\T^{d})}\;.
    \end{equation*}
    The proportionality constant depends on $T$. 
\end{lem}
\begin{proof}
    Both bounds follow by combining Green's functions estimates for $R_\eps$ in Propositions~\ref{prop:Green_remainder_point} and~\ref{prop:Green_remainder_derivative}, and the heat kernel bounds in Lemma~\ref{lem:initial}. 
\end{proof}

\begin{lem} \label{lem:initial_error}
Fix $\gamma \in [0,1]$. For every $\sigma>0$, there exist $\beta, \theta>0$ such that
\begin{equation*}
    \big\| (\nabla e^{t \lL_\eps} - \Phi_\eps \nabla e^{t \lL_0}) f \big\|_{\cC_x^\beta} \lesssim \eps^\theta \cdot t^{-\frac{1-\gamma+\sigma}{2}} \, \|f\|_{\cC^\gamma}\;.
\end{equation*}
\end{lem}
\begin{proof}
Fix $\sigma > 0$. Since the kernel of $\nabla e^{t \lL_\eps} - \Phi_\eps \nabla e^{t \lL_0}$ is $R_\eps$ and it integrates to $0$ on the torus with respect to the second spatial variable, we have
\begin{equation*}
    \big( (\nabla e^{t \lL_\eps} - \Phi_\eps \nabla e^{t \lL_0}) f \big)(x) = \int_{\T^d} R_\eps (t,x,y) \big (f(y) - f(x) \big) {\rm d}y\;.
\end{equation*}
Hence, by Proposition~\ref{prop:Green_remainder_point}, there exists $\sigma' \in (0,\sigma)$ and $\theta'>0$ such that
\begin{equation*}
    \begin{split}
    \big\| (\nabla e^{t \lL_\eps} - \Phi_\eps \nabla e^{t \lL_0}) f \big\|_{L_x^\infty} &\lesssim \eps^{\theta'} \sup_{x \in \T^d} \bigg( \int_{\T^d} \frac{|f(y) - f(x)|}{\big( \sqrt{t} + |x-y| \big)^{d+1+\sigma'}} {\rm d}y \bigg) \\
    &\lesssim \eps^{\theta'} \cdot t^{-\frac{1-\gamma+\sigma'}{2}} \|f\|_{\cC^\gamma}\;.
    \end{split}
\end{equation*}
On the other hand, by Proposition~\ref{prop:Green_remainder_derivative}, there exist $\beta_1, \beta_2 > 0$ such that
\begin{equation*}
    \big\| (\nabla e^{t \lL_\eps} - \Phi_\eps \nabla e^{t \lL_0}) f \big\|_{\cC_x^1} \lesssim \eps^{-\beta_1} \cdot t^{-\frac{2+\beta_2}{2}} \|f\|_{\cC^\gamma}\;.
\end{equation*}
Since $\sigma'<\sigma$, interpolating the above two bounds gives the desired claim. 
\end{proof}

\begin{lem} \label{lem:initial_negative}
Let $\beta, \gamma \in (0,1)$. For every $\delta>0$, we have the bounds
\begin{equation} \label{e:initial_negative}
    \|e^{t \lL_\eps} f\|_{\cC_x^\beta} \lesssim t^{-\frac{\beta+\gamma+\delta}{2}} \|f\|_{\cC^{-\gamma}}
\end{equation}
and
\begin{equation*}
    \|(e^{t \lL_\eps} - e^{s \lL_\eps}) f\|_{L_x^\infty} \lesssim (t-s)^{\frac{\beta}{2}} s^{-\frac{\beta+\gamma+\delta}{2}} \|f\|_{\cC^{-\gamma}}\;.
\end{equation*}
\end{lem}
\begin{proof}
We give details for the first bound, and briefly discuss how the second one follows from it. By Green's function bounds in Proposition~\ref{prop:Green_point} and suitable interpolation, we have
\begin{equation*}
    |(e^{t \lL_\eps} f)(x)| \lesssim \|Q_\eps (t,x,y)\|_{\wW_y^{\gamma+\delta,1}} \|f\|_{\cC^{-\gamma}} \lesssim t^{-\frac{\gamma+\delta}{2}} \|f\|_{\cC^{-\gamma}}\;,
\end{equation*}
and
\begin{equation*}
    |(\nabla e^{t \lL_\eps} f)(x)| \lesssim \|\nabla_x Q_\eps (t,x,y\|_{\wW_y^{\gamma+\delta,1}} \|f\|_{\cC^{-\gamma}} \lesssim t^{-\frac{1+\gamma+\delta}{2}} \|f\|_{\cC^{-\gamma}}\;,
\end{equation*}
where $\wW_{y}^{\gamma+\delta,1}$ denotes the standard Sobolev norm in the second spatial variable of $Q_\eps$ (and $\nabla_x Q_\eps$). Interpolating these two bounds gives the desired bound \eqref{e:initial_negative} for $\cC^\beta$ with $\beta \in (0,1)$. As for the time difference, by Lemma~\ref{lem:initial} (with $\beta=\gamma$) and \eqref{e:initial_negative}, we have
\begin{equation*}
    \begin{split}
    \big\| (e^{t \lL_\eps} - e^{s \lL_\eps}) f \big\|_{L_x^\infty} &= \big\| \big( e^{(t-s) \lL_\eps} - \id \big) e^{s \lL_\eps} f \big\|_{L_x^\infty}\\
    &\lesssim (t-s)^{\frac{\beta}{2}} \|e^{s \lL_\eps} f\|_{\cC_x^{\beta}} \lesssim (t-s)^{\frac{\beta}{2}} s^{-\frac{\beta+\gamma+\delta}{2}} \|f\|_{\cC^{-\gamma}}\;.
    \end{split}
\end{equation*}
This completes the proof. 
\end{proof}

\begin{lem} \label{lem:remainder_convolution_negative}
Let $\gamma \in (0,1)$. For every $\delta>0$, there exist $\beta, \theta > 0$ such that
\begin{equation*}
    \big\| \big(\nabla e^{t \lL_\eps} - \Phi_\eps \nabla e^{t \lL_0} \big) f \big\|_{\cC_x^\beta} \lesssim \eps^\theta t^{-\frac{1+\gamma+\delta}{2}} \|f\|_{\cC^{-\gamma}}\;.
\end{equation*}
\end{lem}
\begin{proof}
    We have the expression
    \begin{equation*}
        \big( (\nabla e^{t \lL_\eps} - \Phi_\eps \nabla e^{t \lL_0}) f \big)(x) = \int_{\T^d} R_\eps (t,x,y) f(y) {\rm d}y\;,
    \end{equation*}
    where we recall the kernel $R_\eps$ is given by \eqref{e:difference_first_order}. By duality and interpolation, for every $\delta'>0$, we have
    \begin{equation} \label{e:remainder_convolution}
        \begin{split}
        &\phantom{111}\big\| \big(\nabla e^{t \lL_\eps} - \Phi_\eps \nabla e^{t \lL_0} \big) f \big\|_{\cC_x^\beta}\\
        &\lesssim \Big(\sup_x \|R_\eps(t,x,y)\|_{\wW^{\gamma+\delta',1}_y}^{1-\beta} \Big) \cdot \Big( \sup_x \|\nabla_x R_\eps(t,x,y)\|_{\wW^{\gamma+\delta',1}_y}^{\beta} \Big) \cdot \|f\|_{\cC^{-\gamma}}\;.
        \end{split}
    \end{equation}
    By Propositions~\ref{prop:Green_remainder_point}, for the above $\delta'>0$, there exists $\theta'>0$ such that
    \begin{equation*}
        |R_\eps (t,x,y)| \lesssim \frac{\eps^{\theta'}}{\big( \sqrt{t} + |x-y| \big)^{d+1+\delta'}}\;.
    \end{equation*}
    By the cross derivative estimate in Proposition~\ref{prop:Green_point}, we have
    \begin{equation*}
        |\nabla_y R_\eps (t,x,y)| \lesssim \frac{1}{\big( \sqrt{t} + |x-y| \big)^{d+2}}\;.
    \end{equation*}
    Interpolating the above two bounds gives
    \begin{equation} \label{e:remainder_convolution_pointwise}
        \| R_\eps(t,x,y) \|_{\wW^{\gamma+\delta',1}_y} \lesssim \eps^{\theta'} \cdot t^{-\frac{1+\gamma+2\delta'}{2}}
    \end{equation}
    for some possibly different $\theta'>0$ (depending on $\gamma$ and $\delta'$ only). Also, by Proposition~\ref{prop:Green_remainder_derivative}, there exist $\sigma_1, \sigma_2 > 0$ such that
    \begin{equation} \label{e:remainder_convolution_derivative}
        \|\nabla_x R_\eps (t,x,y)\|_{\wW^{\gamma+\delta',1}_y} \lesssim \eps^{-\sigma_1} t^{-\frac{\sigma_2}{2}}\;.
    \end{equation}
    Now, plugging the bounds \eqref{e:remainder_convolution_pointwise} and \eqref{e:remainder_convolution_derivative} back into \eqref{e:remainder_convolution}, we get
    \begin{equation*}
        \big\| \big(\nabla e^{t \lL_\eps} - \Phi_\eps \nabla e^{t \lL_0} \big) f \big\|_{\cC_x^\beta} \lesssim \eps^{(1-\beta) \theta' - \beta \sigma_1} t^{-\frac{1}{2} \big( (1-\beta) (1+\gamma+2\delta') + \beta \sigma_2 \big)}\;.
    \end{equation*}
    We can choose $\beta$ sufficiently small so that the power in $\eps$ is still positive, and that the power in $t$ is arbitrarily close to $-\frac{1+\gamma+2\delta'}{2}$. Since $\delta'>0$ is arbitrary, this proves the claim. 
\end{proof}

The next two lemmas are the content of \cite[Theorem~1.4]{GengShen2015} and interpolation.

\begin{lem} \label{lem:Lp_critical}
For every $p\in (1,\infty)$ and $T>0$, we have the bound
\begin{equation*}
    \|\iI_{\varepsilon}(f)\|_{L^{p}([0,T];W^{1,p}(\T^{2}))}\lesssim_{T,p} \|f\|_{L^{p}([0,T];W^{-1,p}(\T^2))}\;.
\end{equation*}
\end{lem}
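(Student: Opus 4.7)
My plan is to reduce the claim to the uniform-in-$\varepsilon$ interior $W^{1,p}$ parabolic estimate of Geng and Shen, by first decomposing the right-hand side into an $L^p$ function plus a divergence of an $L^p$ vector field.

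First, I would write $f = f_0 + \operatorname{div} g$ with $f_0, g \in L^p([0,T]; L^p(\T^2))$ and
\begin{equation*}
\|f_0\|_{L^p_t L^p_x} + \|g\|_{L^p_t L^p_x} \lesssim_p \|f\|_{L^p_t W^{-1,p}_x}.
\end{equation*}
Concretely, one can take $f_0(t,\cdot) := \widehat{f(t,\cdot)}(0)$ and $g(t,\cdot) := -\nabla(-\Delta)^{-1}\scal{f(t,\cdot)}$ on $\T^2$, where the $L^p$ bounds follow from $L^p$-boundedness of Riesz transforms on the torus for $p \in (1,\infty)$ (this is essentially the definition of $W^{-1,p}$ as the dual of $W^{1,p'}$ together with Calderón-Zygmund theory).

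Next, set $u := \iI_\varepsilon f$, which solves $(\partial_t - \lL_\varepsilon) u = f_0 + \operatorname{div} g$ with $u(0) = 0$. I would then invoke \cite[Theorem~1.4]{GengShen2015}, which gives a uniform-in-$\varepsilon$ parabolic $W^{1,p}$ estimate of the form
\begin{equation*}
\|\nabla u\|_{L^p_t L^p_x} \lesssim_{T,p} \|f_0\|_{L^p_t L^p_x} + \|g\|_{L^p_t L^p_x},
\end{equation*}
exactly covering the divergence-form right-hand side produced by the decomposition. This already yields the gradient half of the $W^{1,p}$ norm with the required uniformity.

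Finally, for the $L^p$-norm of $u$ itself I would use the Duhamel formula
\begin{equation*}
u(t,x) = \int_0^t \int_{\T^2} Q_\varepsilon(t-s,x,y) f_0(s,y)\,{\rm d}y\,{\rm d}s - \int_0^t \int_{\T^2} (\nabla_y Q_\varepsilon)(t-s,x,y)\cdot g(s,y)\,{\rm d}y\,{\rm d}s,
\end{equation*}
combined with the Gaussian bounds on $Q_\varepsilon$ and $\nabla_y Q_\varepsilon$ from Proposition~\ref{prop:Green_point}. Since both kernels are integrable in $y$ uniformly on a bounded time interval (the first one is $L^1_y$-bounded uniformly in time, the second one picks up an integrable $(t-s)^{-1/2}$ in time after $L^1_y$-integration against $g$), Young's convolution inequality in space and $L^p$-boundedness of the time convolution with $(t-s)^{-1/2}$ on $[0,T]$ yield $\|u\|_{L^p_t L^p_x} \lesssim_{T,p} \|f_0\|_{L^p_t L^p_x} + \|g\|_{L^p_t L^p_x}$. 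Summing the two estimates gives the stated $W^{1,p}$ bound.

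The main obstacle is the uniform-in-$\varepsilon$ $W^{1,p}$ control in Step 2, since straightforward energy methods only deliver $L^2$-based estimates; this is precisely why one must appeal to the compactness-and-real-variables machinery of \cite{GengShen2015}, rather than attempting a direct computation. Everything else is a routine CZ/Riesz-transform manipulation plus standard kernel bounds.
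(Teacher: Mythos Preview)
Your proposal is correct and follows the same route as the paper: both reduce the estimate to \cite[Theorem~1.4]{GengShen2015}. The paper merely states that the lemma follows from that theorem, whereas you spell out the reduction---decomposing $f \in W^{-1,p}$ into an $L^p$ function plus the divergence of an $L^p$ vector field, invoking the Geng--Shen estimate for $\nabla u$, and handling $\|u\|_{L^p_tL^p_x}$ separately via the kernel bounds of Proposition~\ref{prop:Green_point} and Young's inequality---all of which is standard and correctly argued.
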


\begin{lem} \label{lem:interpolation}
    Fix $\beta, \gamma>0$. We have
    \begin{equation*}
        \|f\|_{L^\infty (\T^{d})} \lesssim \|f\|_{\cC^\beta(\T^{d})}^{\frac{\gamma}{\beta+\gamma}} \|f\|_{\cC^{-\gamma}(\T^{d})}^{\frac{\beta}{\beta + \gamma}}\;.
    \end{equation*}
\end{lem}

\begin{lem} \label{lem:I_eps_spacetime_norm}
Fix $\beta, \gamma, \sigma \in (0,1)$ with $\beta+\gamma+\sigma<2$. Then, there exists $\theta>0$ such that
\begin{equation*}
    \|\iI_\eps (f)\|_{\cC_\fs^{\beta}([0,T] \times \T^d)} \lesssim T^\theta \|f\|_{L_T^{\infty, \sigma} \cC_x^{-\gamma}}\;.
\end{equation*}
\end{lem}
\begin{proof}
Let $\delta>0$ be sufficiently small such that $\beta + \gamma + \sigma + \delta < 2$. By Lemma~\ref{lem:initial_negative}, we have
\begin{equation*}
    \|(\iI_\eps f)(t)\|_{\cC^\beta} \lesssim \int_{0}^{t} (t-r)^{-\frac{\beta+\gamma+\delta}{2}} \|f(r)\|_{\cC^{-\gamma}} {\rm d}r \lesssim t^{1-\frac{\beta+\gamma+\sigma+\delta}{2}} \|f\|_{L_t^{\infty,\sigma} \cC_x^{-\gamma}}\;.
\end{equation*}
As for continuity in time, we write
\begin{equation*}
    (\iI_\eps f)(t) - (\iI_\eps f)(s) = \int_{0}^{s} \big( e^{(t-r) \lL_\eps} - e^{(s-r) \lL_\eps} \big) f(r) {\rm d}r + \int_{s}^{t} e^{(t-r) \lL_\eps} f(r) {\rm d}r\;.
\end{equation*}
For the first one, again by Lemma~\ref{lem:initial_negative}, we have
\begin{equation*}
    \begin{split}
    &\phantom{111}\int_{0}^{s} \big\| \big( e^{(t-r) \lL_\eps} - e^{(s-r) \lL_\eps} \big) f(r) \big\|_{L^\infty} {\rm d}r\\
    &\lesssim (t-s)^{\frac{\beta}{2}} \int_{0}^{s} (s-r)^{-\frac{\beta+\gamma+\delta}{2}} \|f(r)\|_{\cC^{-\gamma}} {\rm d}r \lesssim (t-s)^{\frac{\beta}{2}} s^{1-\frac{\beta+\gamma+\sigma+\delta}{2}} \|f\|_{L_s^{\infty,\sigma} \cC_x^{-\gamma}}\;.
    \end{split}
\end{equation*}
The second term can be dealt with similarly. This completes the proof. 
\end{proof}

\begin{lem} \label{lem:I_eps_div_spacetime_norm}
For every $\beta \in (0,1)$, there exists $\theta>0$ such that
\begin{equation*}
    \|\iI_\eps (\div f)\|_{\cC_\fs^\beta ([0,T] \times \T^d)} \lesssim T^\theta \|f\|_{L_{T,x}^\infty}
\end{equation*}
uniformly over $f \in L^\infty ([0,T]; L^\infty (\T^d; \, \R^d))$. 
\end{lem}
\begin{proof}
Integrating by parts, we have
\begin{equation*}
    \big( \iI_\eps (\div f) \big)(t,x) = - \int_{0}^{t} \int_{\T^d} (\nabla_y^{\sT} Q_\eps)(t-r,x,y) f(r,y) {\rm d}y {\rm d}r\;.
\end{equation*}
We first consider the $L_T^\infty \cC_x^\beta$ norm. By the pointwise estimates of $\nabla_y^{\sT} Q_\eps$ and $\nabla_{x,y} Q_\eps$ in Proposition~\ref{prop:Green_point}, we have
\begin{equation*}
    \Big| \int_{\T^d} (\nabla_y^{\sT} Q_\eps)(t-r,x,y) f(r,y) {\rm d}y \Big| \lesssim (t-r)^{-\frac{1}{2}} \|f\|_{L_{t,x}^\infty}
\end{equation*}
and
\begin{equation*}
    \Big| \nabla_x \int_{\T^d} (\nabla_y^{\sT} Q_\eps)(t-r,x,y) f(r,y) {\rm d}y \Big| \lesssim (t-r)^{-1} \|f\|_{L_{t,x}^\infty}\;.
\end{equation*}
Interpolating the two bounds gives
\begin{equation*}
    \Big\| \int_{\T^d} (\nabla_y^{\sT} Q_\eps)(t-r,\cdot,y) f(r,y) {\rm d}y\Big\|_{\cC_x^\beta} \lesssim (t-r)^{-\frac{1+\beta}{2}} \|f\|_{L_{t,x}^\infty}\;.
\end{equation*}
Integrating $r \in [0,t]$ then gives the $L_T^\infty \cC_x^\beta$ bound. For the $\cC_t^{\beta/2} L_x^\infty$ part, we write
\begin{equation*}
    \begin{split}
    \big( \iI_\eps (\div f) \big)(t,x) - &\big( \iI_\eps (\div f) \big)(s,x) = - \int_{s}^{t} \int_{\T^{d}} (\nabla_y^\sT Q_\eps)(t-r,x,y) f(r,y) {\rm d}y {\rm d}r\\
    &- \int_{0}^{s} \int_{\T^d} f(r,y)  \bigg[ \int_{\T^d} Q_\eps (t-s,x,z)\\
    &\cdot \Big( (\nabla_y^{\sT} Q_\eps)(s-r,z,y) - (\nabla_y^{\sT} Q_\eps)(s-r,x,y) \Big) {\rm d}z \bigg] {\rm d}y {\rm d}r\;.
    \end{split}
\end{equation*}
The desired bounds again follow from the estimates for $Q_\eps$, $\nabla_y^\sT Q_\eps$ and $\nabla_{x,y}^2 Q_\eps$. This completes the proof. 
\end{proof}

\begin{lem} \label{lem:I0_time_derivative}
Suppose $0 < \beta < \gamma < 1$. Then there exists $\theta>0$ such that
\begin{equation*}
    \|\iI_0 (\d_t f)\|_{\cC_\fs^\beta([0,T] \times \T^d)} \lesssim T^\theta \|f\|_{\cC_\fs^\gamma}\;.
\end{equation*}
\end{lem}
\begin{proof}
Note that we have the identity
\begin{equation*}
    \big( \iI_0 (\d_t f) \big)(t) = \int_{0}^{t} e^{(t-r) \lL_0} \, \d_r f(r) {\rm d}r = f(t) - e^{t \lL_0} f(0) + (\iI_0 \lL_0 f)(t)\;.
\end{equation*}
The conclusion then follows. 
\end{proof}

\section{Bony's decomposition and para-products}
\label{sec:para-products}

Para-products were introduced by Bony in \cite{Bony1981} to deal with multiplications in nonlinear PDEs. In this appendix, we introduce Bony's decomposition and para-products needed in this article, mostly based on the exposition in \cite[Section~2]{Fourier_para}. 

Let $\widetilde{\rho}$ and $\rho$ be compactly supported smooth functions from $\R^d$ to $[0,1]$ such that
\begin{itemize}
    \item $\widetilde{\rho} + \sum_{j=0}^{\infty} \rho(2^{-j} \cdot)\equiv 1$\;; 
    \item $\supp (\widetilde{\rho}) \, \cap \, \supp \big(\rho(2^{-j} \cdot) \big) = \emptyset$ for every $j \geq 1$\;;
    \item $\supp \big(\rho(2^{-i} \cdot) \big) \, \cap \, \supp \big(\rho( 2^{-j} \cdot)\big) = \emptyset$ whenever $|i-j| \geq 2$\;. 
\end{itemize}
Write $\rho_j = \rho(2^{-j} \cdot)$ for $j \geq 0$, and $\rho_{-1} := \widetilde{\rho}$. Using the dyadic decomposition defined above, we can define Littlewood-Paley blocks $\Delta_{j}$ as 
\begin{align*}
    \widehat{\Delta_j f}(k)  :=  \rho_j(k) \widehat{f}(k)
\end{align*}
for all $j\geq -1$, $k \in \Z^d$, where $\widehat{f}(k)$ is the $k$-th Fourier coefficient of $f$. 

\begin{defn} \label{defn:Holder_Besov}
For every $\gamma \in \R$, the H\"older-Besov norm $\cC^\gamma (\T^d)$ is defined by
\begin{equation*}
    \|f\|_{\cC^\gamma} := \sup_{j \geq -1} \left( 2^{\gamma j} \cdot \big\| \Delta_j f \big\|_{L^\infty} \right)\;.
\end{equation*}
\end{defn}

It is well known that for non-integer $\gamma>0$, the above $\cC^\gamma$-norm is equivalent to classical H\"older-$\gamma$ norm, and that $\cC^1$ defined in this way is equivalent to the following H\"older-Zygmund norm (see \cite[Chapter~2]{YSSpringer}),
\begin{equation*}
    \|f\|_{HZ} :=  \|f\|_{L^{\infty}(\T^{d})}+\sup_{x\neq y\in \T^{d}}|x-y|^{-1}|\frac{f(x)+f(y)}{2}-f(\frac{x+y}{2})|\;.
\end{equation*}
For $\m \geq 1$, recall from \eqref{e:Littlewood_Paley_cutoff} the operators $\P_\m$ and $\P_\m^\perp$ that
\begin{equation*}
    \P_\m f := \sum_{j \, \leq \, \log_2 \m} \Delta_j f\;, \qquad \P_\m^\perp f := f - \P_\m f\;.
\end{equation*}
We have the following lemma.

\begin{lem} \label{lem:Besov_cutoff}
Fix $\gamma \in \R$ and $\theta \geq 0$. We have
\begin{equation} \label{e:truncate_high}
    \|\P_\m^\perp f\|_{\cC^{\gamma-\theta}} \lesssim \m^{-\theta} \|f\|_{\cC^\gamma}
\end{equation}
and
\begin{equation} \label{e:truncate_low}
    \|\P_\m f\|_{\cC^{\gamma+\theta}} \lesssim \big(1 + \m^{\theta} \big) \|f\|_{\cC^\gamma}
\end{equation}
uniformly over $f \in \cC^\gamma (\T^d)$ and $\m \in \N$. 
\end{lem}
\begin{proof}
Both bounds follow directly from the definitions of $\P_\m$, $\P_\m^\perp$ and the $\cC^\gamma$ norm, so we omit the details. 
\end{proof}

Define the para-products $\prec$, $\succ$ and $\circ$ by
\begin{equation*}
    f \prec g = g \succ f := \sum_{i \leq j-2} \Delta_i f \cdot \Delta_j g\;, \qquad f \circ g := \sum_{|i-j| \leq 1} \Delta_i f \cdot \Delta_j g\;.
\end{equation*}
Then the product $f g$ of a $\cC^{\beta}$ distribution $f$ and a $\cC^{\gamma}$ distribution $g$ with $\beta+\gamma>0$ can be written as
\begin{align*}
    f g  = f \prec g + f \circ g + f \succ g\;.
\end{align*}
The classical property of para-products is given as follows.

\begin{lem} \label{lem:Bony}
We have
\begin{equation*}
    \begin{split}
    &\|f \prec g\|_{\cC^{\gamma}} \lesssim \|f\|_{L^\infty} \|g\|_{\cC^\gamma}\;, \qquad \gamma \in \R\; ;\\
    &\|f \circ g\|_{\cC^{\beta+\gamma}} \lesssim \|f\|_{\cC^\beta} \|g\|_{\cC^\gamma}\;,\qquad \beta + \gamma > 0\;;\\
    &\|f \succ g\|_{\cC^{\beta+\gamma}} \lesssim \|f\|_{\cC^\beta} \|g\|_{\cC^\gamma}\;, \qquad \gamma<0\;.
    \end{split}
\end{equation*}
All proportionality constants are uniform over the functions $f$ and $g$ in the respective classes. 
\end{lem}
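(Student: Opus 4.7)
The plan is to use the Littlewood--Paley characterisation $\|h\|_{\cC^\mu} \sim \sup_k 2^{k\mu}\|\Delta_k h\|_{L^\infty}$ and reduce each of the three bounds to an estimate on $\|\Delta_k(\cdot)\|_{L^\infty}$ by tracking the Fourier support of each dyadic summand. Setting $S_{j-1}f := \sum_{i \leq j-2}\Delta_i f$, the driving structural observation is that the summand $S_{j-1}f \cdot \Delta_j g$ (which appears in $\prec$ and $\succ$) has Fourier support in an annulus of scale $2^j$, whereas the resonant summand $\Delta_i f \cdot \Delta_j g$ with $|i-j|\leq 1$ (appearing in $\circ$) only has Fourier support in a ball of radius $\lesssim 2^j$. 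This annulus-versus-ball distinction is the source both of the allowed range of regularities and of the subtlety in summing.

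For the first estimate I will write $f \prec g = \sum_j S_{j-1}f \cdot \Delta_j g$. Annulus localisation means $\Delta_k(f\prec g)$ collects only $O(1)$ terms with $|j-k|=O(1)$, and bounding each by $\|f\|_{L^\infty}\cdot 2^{-j\gamma}\|g\|_{\cC^\gamma}$ yields the first estimate for all $\gamma \in \R$. For the resonant estimate, $\Delta_k(f\circ g)$ receives contributions from all $j \gtrsim k$; the block bound $\|\Delta_i f\cdot \Delta_j g\|_{L^\infty} \lesssim 2^{-j(\mu+\nu)}\|f\|_{\cC^\mu}\|g\|_{\cC^\nu}$ (with $\mu$, $\nu$ the regularities of $f$ and $g$ as spelled out in the statement), summed geometrically over $j \geq k$, produces the rate $2^{-k(\mu+\nu)}$ precisely when $\mu+\nu>0$, matching the positivity hypothesis in the statement.

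For the third estimate I will use $f \succ g = \sum_j \Delta_j f \cdot S_{j-1} g$ and again localise to $j \sim k$ via annulus support. The new ingredient, needed because $g$ is a genuine distribution, is the bound
\[
\|S_{j-1}g\|_{L^\infty} \leq \sum_{i\leq j-2}\|\Delta_i g\|_{L^\infty} \lesssim \sum_{i\leq j-2}2^{-i\gamma}\|g\|_{\cC^\gamma} \lesssim 2^{-j\gamma}\|g\|_{\cC^\gamma},
\]
where the last step works precisely because $\gamma<0$ makes the geometric series dominated by its largest term. Combining with $\|\Delta_j f\|_{L^\infty} \lesssim 2^{-j\beta}\|f\|_{\cC^\beta}$ yields the exponent $\beta+\gamma$ in the output.

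I do not expect a serious obstacle, since these are textbook Bony paraproduct bounds. The only real content is the summability bookkeeping: trivial for $\prec$ (finitely many $j$ contribute per $k$), requiring positivity of the sum of exponents for $\circ$ (this being forced by the ball localisation of Fourier support), and requiring negativity of the regularity of $g$ for $\succ$ (so that the truncated low-frequency part $S_{j-1}g$ can be controlled by its largest dyadic block). Once these summations are in hand, the $\cC^{\cdot}$-bounds follow at once from the Littlewood--Paley characterisation.
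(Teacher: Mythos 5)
The paper does not supply its own proof of this lemma: it records it as a classical fact and points to \cite[Section~2]{Fourier_para} for the argument. Your proposal is precisely the standard Littlewood--Paley argument given there --- reduce the $\cC^\mu$-norm to $\sup_k 2^{k\mu}\|\Delta_k(\cdot)\|_{L^\infty}$, exploit that $S_{j-1}f\,\Delta_j g$ is spectrally supported in an annulus of scale $2^j$ (so only $j\sim k$ survive $\Delta_k$) while $\Delta_i f\,\Delta_j g$ with $|i-j|\le 1$ lives in a ball of radius $\lesssim 2^j$ (so all $j\gtrsim k$ contribute and a tail sum is needed), and control $S_{j-1}g$ by its top dyadic block using $\gamma<0$ --- so it is correct and there is nothing to add on substance.

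One thing worth flagging explicitly rather than silently correcting, as you did by switching to generic indices $\mu,\nu$: the middle line of the lemma as printed in the paper has a typo. It should read $\|f\circ g\|_{\cC^{\alpha+\beta}}\lesssim\|f\|_{\cC^\alpha}\|g\|_{\cC^\beta}$ under the hypothesis $\alpha+\beta>0$; the exponent $\beta+\gamma$ and the condition $\beta+\gamma>0$ do not match the regularities $\alpha,\beta$ carried by $f,g$ in that inequality. Your summation argument proves the corrected version, which is the one the paper actually uses.
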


Finally, to simplify notations, we also write
\begin{equation*}
    f \preceq g = f \prec g + f \circ g\;, \qquad f \succeq g = f \circ g + f \succ g\;.
\end{equation*}

\section{Derivation of $\widetilde{\tT}_{\eps}$}
\label{sec:T}

The purpose of this section is to derive the expression \eqref{e:functional_T_tilde_eps} for $\widetilde{\tT}_\eps (u_\eps, v_\eps, w_\eps) = \d_t \big( g(u_\eps) \big)$, with $u_\eps$ being the ``solution" to \eqref{e:gPAM_formal} and whose terms are all well defined in desired regularity spaces. Such an expression necessarily involves the triple $(u_\eps, v_\eps, w_\eps)$. 

We first obtain an effective expression for the right hand side of
\begin{equation*}
    \d_t u_\eps = \div (a_\eps \nabla u_\eps) + g(u_\eps) \big( \xi - \infty_\eps \, g'(u_\eps) \big)\;.
\end{equation*}
Note that we have
\begin{equation*}
    \nabla u_\eps = g(u_\eps) \nabla X_\eps + \aA_\eps (u_\eps, v_\eps, w_\eps)\;.
\end{equation*}
Substituting it back into the first term on the right hand side above, and using \eqref{e:gPAM_formal_RHS} for the second term, we get
\begin{equation*}
    \begin{split}
    \d_t u_\eps = \div \big( a_\eps \aA_\eps \big) + \sS_\eps\;,
    \end{split}
\end{equation*}
where $\sS_\eps = \sS_\eps (u_\eps, v_\eps, w_\eps)$ is defined in \eqref{e:functional_S_eps}. To get an expression for $\d_t \big( g(u_\eps) \big)$, we need to multiply $g'(u_\eps)$ on both sides above. While the product $g'(u_\eps) \sS_\eps$ is a priori well defined, the one between $g'(u_\eps)$ and $\div (a_\eps \aA_\eps)$ is not since the latter lives in $\cC^{-1}$ while the former only has $\cC^{1-}$ regularity. But we can integrate by parts to get
\begin{equation*}
    \d_t \big( g(u_\eps) \big) = \div \big( g'(u_\eps) a_\eps \aA_\eps \big) - g''(u_\eps) \nabla^{\sT} \, u_\eps \, a_\eps \aA_\eps + g'(u_\eps) \sS_\eps\;.
\end{equation*}
Using again the above expression for $\nabla u_\eps$, we get
\begin{equation*}
    \begin{split}
    \d_t \big( g(u_\eps) \big) = &\div \big( g'(u_\eps) a_\eps \aA_\eps \big) + g'(u_\eps) \sS_\eps\\
    &- g''(u_\eps) \big( \aA_\eps^\sT a_\eps \aA_\eps + g(u_\eps) \cdot \aA_\eps^{\sT} a_\eps^{\sT} \nabla X_\eps \big)\;,
    \end{split}
\end{equation*}
which is precisely the expression \eqref{e:functional_T_tilde_eps}.

\printbibliography

\bigskip
\bigskip

\textbf{Ethics declaration:} not applicable. 
\\
\\
\\
\textsc{Beijing International Center for Mathematical Research, Peking University, 5 Yiheyuan Road, Haidian District, Beijing, 100871, China}. 
\\
Email: yilin\_chen@pku.edu.cn
\\
\\
\textsc{Department of Mathematics, Louisiana State University, Baton Rouge, LA 70802, USA}. 
\\
Email: fehrman@math.lsu.edu
\\
\\
\textsc{Institute for Theoretical Sciences, Westlake University, 600 Dun Yu Road, Westlake District, Hangzhou, Zhejiang Province, 310030, China}. 
\\
Email: xuweijun@westlake.edu.cn

\end{document}